\title{Computational Complexity of Statistics:\\New Insights from Low-Degree Polynomials}
\author{Alexander S.\ Wein\thanks{Email: \textit{aswein@ucdavis.edu}. Supported by a Sloan Research Fellowship and NSF CAREER Award CCF-2338091.}}
\affil{Department of Mathematics, UC Davis}
\date{}
\begin{document}

\maketitle

\begin{abstract}
This is a survey on the use of low-degree polynomials to predict and explain the apparent statistical-computational tradeoffs in a variety of average-case computational problems. In a nutshell, this framework measures the complexity of a statistical task by the minimum degree that a polynomial function must have in order to solve it. The main goals of this survey are to (1)~describe the types of problems where the low-degree framework can be applied, encompassing questions of detection (hypothesis testing), recovery (estimation), and more; (2)~discuss some philosophical questions surrounding the interpretation of low-degree lower bounds, and notably the extent to which they should be treated as evidence for inherent computational hardness; (3)~explore the known connections between low-degree polynomials and other related approaches such as the sum-of-squares hierarchy and statistical query model; and (4)~give an overview of the mathematical tools used to prove low-degree lower bounds. A list of open problems is also included.
\end{abstract}

\newpage
\tableofcontents

\newpage
\section{Overview}

\subsection{What Is This Topic?}

Imagine trying to find a hidden $k$-vertex clique (fully connected subgraph) within an otherwise random $n$-vertex graph (network). While it is possible to find a hidden clique of size $k \approx \log n$ by brute-force search, all known ``fast'' (polynomial-time) algorithms only work if the clique is much larger: $k \approx \sqrt{n}$. Is this an inherent limitation of fast algorithms or should we continue looking for a better one? Similar questions of computational complexity arise in many other statistical settings, such as community detection, clustering, and sparse PCA. While we lack the tools to prove definitively that fast algorithms require $k \gtrsim \sqrt{n}$, this survey describes one sense in which we \emph{can} prove this threshold is fundamental: all algorithms based on \emph{low-degree polynomials} --- for instance, counting triangles in the graph would be a degree-3 polynomial --- provably fail (in an appropriate sense) when $k \ll \sqrt{n}$. Furthermore, these low-degree algorithms tend to capture the best tools in our algorithmic toolkit for problems of this style, so finding a fast algorithm for $k \gg \sqrt{n}$ would seem to require a major breakthrough or may simply be impossible. This provides a lens for predicting and explaining the limitations of fast algorithms across many different settings.

\subsection{Why This Survey?}

In 2019, I wrote a survey~\cite{ld-notes} on this topic together with Dmitriy (Tim) Kunisky and Afonso Bandeira. In the 6 years since then, our community has refined some of the basic notions in this area, as well as the surrounding philosophy. For instance, my opinion about the ``right'' definition of ``success'' for low-degree algorithms has evolved somewhat over time (see the discussion in Section~\ref{sec:comm-sep}). Furthermore, low-degree lower bounds now exist not only for hypothesis testing questions, but also for other types of tasks such as estimation and optimization. Finally, new discoveries have surfaced in the ongoing quest to understand the relation between predictions based on low-degree polynomials and predictions based on other frameworks for average-case complexity. This survey aims to give an updated account of all these new developments.

When writing a paper on this topic, I always find myself attempting to write a section that justifies why the low-degree polynomial framework yields reliable conjectures about inherent computational hardness, while also remaining transparent about many caveats and imploring readers not to be overconfident about these conjectures. These discussions are nuanced and it is tedious to repeat them in every paper. This survey (particularly Section~\ref{sec:interp}) is my attempt to address these points as best I can, so that future papers can simply refer the reader here for such discussions.

In contrast to our previous survey~\cite{ld-notes}, this one is mostly non-technical: I won't give the full details of any proofs here. Instead, this is intended as a ``roadmap'' to the existing literature. For some of the more subjective points, I will sometimes inject my own opinion and will use ``I'' rather than the academic ``we'' to designate this. At the same time, I will attempt to represent other opinions and philosophies that I feel are present in our community. Please feel free to contact me with any corrections or counterpoints to the arguments I've made here.

\subsection{Acknowledgments}

This survey was written with significant input from Tim Kunisky and Tselil Schramm, who both gave me detailed feedback on an earlier draft. I am also grateful for comments from Guy Bresler, Ilias Diakonikolas, David Gamarnik, Cheng Mao, Ankur Moitra, Aaron Potechin, and Ilias Zadik, particularly on the connections to other frameworks in Section~\ref{sec:relations}. Many others have helped shape my ideas about the topics in this survey, including Sam Hopkins.

\subsection{Organization}
\label{sec:org}

Part~I (Sections~\ref{sec:high-dim}--\ref{sec:low-deg}) is intended as a self-contained introduction to this area. Section~\ref{sec:high-dim} sets the stage by describing the style of problems where the low-degree polynomial framework may be applicable. Section~\ref{sec:low-deg} explains what it means to apply the low-degree framework, and what exactly we aim to prove in order to characterize the complexity of statistical tasks using polynomial degree.

Part~II (Sections~\ref{sec:commentary}--\ref{sec:open}) is intended as a deeper dive into the philosophy surrounding the low-degree framework. Each of the six sections in Part~II is mostly self-contained, and those sections can be read in any order. Section~\ref{sec:commentary} offers additional commentary on the definitions from Part~I, as there are a number of frequently asked questions to address here. Section~\ref{sec:tasks} describes specific statistical objectives where the low-degree framework can be applied --- detection, recovery, optimization, and refutation --- as well as the relations among them. Section~\ref{sec:interp} contains a more conjectural discussion of what ``degree complexity'' means for the more traditional \emph{time complexity}, including the ``low-degree conjecture.'' Section~\ref{sec:relations} attempts to summarize the known relations among different frameworks for average-case complexity, with low-degree polynomials playing a central role in this web of connections. Section~\ref{sec:techniques} surveys the mathematical tools that exist for proving the types of results described in Section~\ref{sec:low-deg}. Finally, Section~\ref{sec:open} lists some open problems, some of which are also mentioned throughout the text.

\subsection{Notation and Conventions}

Some basic mathematical notation: $[n]$ is shorthand for $\{1,2,\ldots,n\}$ and $\|\cdot\|$, when applied to a vector, denotes the Euclidean $\ell^2$-norm. The \emph{degree} of a multivariate polynomial is the highest sum of exponents in any term, so for instance, the degree of $f(x,y,z) = 2 x^3 y^5 + xyz^2$ is $8$. The indicator $\One_A$ takes value 1 if the predicate $A$ is true, and 0 if $A$ is false.

Asymptotic notation: we will often consider an asymptotic regime where some parameter called $n$ tends to infinity. We imagine fixing a scaling for the other parameters, so that all parameters can be treated as functions of $n$. Some parameters may be designated as ``constants'' that do not depend on $n$. We use the standard meaning of ``big-O''/``little-o'' notation such as $O(\cdot), \Omega(\cdot), \Theta(\cdot), o(\cdot), \omega(\cdot)$, always pertaining to the limit $n \to \infty$ and potentially hiding constants that do not depend on $n$. Also, $\poly(n)$ stands for $n^{O(1)}$ and $\polylog(n)$ stands for $(\log n)^{O(1)}$. We write $\log^m n$ as shorthand for $(\log n)^m$. The notation $\tilde{O}(\cdot), \tilde{\Omega}(\cdot)$ hides a $\polylog(n)$ factor. The term ``high probability'' means probability $1-o(1)$.

The notation $f(n) \ll g(n)$ is used more informally, typically in a scenario where we seek to identify the ``right'' power of $n$ that some parameter should scale as. So $k \ll \sqrt{n}$ may hide a $\polylog(n)$ factor, or possibly even an $n^{o(1)}$ factor.

The terms ``lower bound'' and ``upper bound'' will often be used in the computer science sense, that is, lower bounds are negative results that show failure for some class of algorithms, while upper bounds are positive results that show success for some algorithm in the class.

References to specific theorems or sections of other works generally refer to the numbering used in the current arXiv version (or whatever freely available version was easiest to find).  Citations are generally listed in chronological order based on the date of first appearance (e.g., on arXiv).

\newpage
\phantomsection
\begin{center}
\huge{\bf Part I: The Basics}
\end{center}
\addcontentsline{toc}{section}{Part I: The Basics}

Part I is intended as a relatively short introduction to the low-degree polynomial framework, accessible to those who may not be familiar with this area.

\section{High-Dimensional Statistics and Computational Barriers}
\label{sec:high-dim}

In the age of ``big data'' we might seek to design an algorithm for extracting useful information from some large noisy dataset. Ideally, this algorithm should reach the fundamental limits of how much can be learned from a limited quantity or quality of data. The desire to understand these limits motivates the mathematical models discussed below.

\subsection{A Few Motivating Examples}
\label{sec:mot-ex}

The framework we will discuss in this survey is useful for understanding a particular style of problems, which can broadly be described as \emph{high-dimensional statistical inference} (\emph{high-dim stats} for short). Before attempting to explain what this means in general, it is instructive to see a few canonical examples.

\begin{itemize}

\item {\bf Planted Clique} (e.g.~\cite{alon-clique,pcal}): Suppose a random graph (network) on $n$ vertices (nodes) is generated according to the Erd\H{o}s--R\'{e}nyi distribution $G(n,1/2)$, that is, each of the $\binom{n}{2}$ potential edges is included independently with probability $1/2$. Then suppose $k$ out of the $n$ vertices are chosen at random and a \emph{clique} (fully-connected subgraph) is included (``planted'') on those vertices, that is, every edge between two of those $k$ vertices that is not already present, is added to the graph. Finally, we (the statistician) observe the resulting graph and our goal is to determine on which $k$ vertices the clique was planted.

\item {\bf Sparse PCA\footnote{The term \emph{PCA (principal component analysis)} sometimes refers to a technique for data analysis, but here we use it to refer to a statistical model where such techniques might be employed. The distinction between models and methods is an important one! The model here is a particular variant of sparse PCA in the \emph{spiked Wishart} model.}} (e.g.~\cite{cov-thresholding,BB-opt}): A hidden $k$-sparse vector $x \in \RR^n$ is drawn from the \emph{sparse Rademacher distribution}, that is, $k$ randomly selected entries of $x$ take values $\pm 1/\sqrt{k}$ with uniformly random signs, and the rest are $0$. This normalization ensures $\|x\| = 1$. Then, for some parameter $\beta \ge 0$, we observe $N$ independent samples $y_1,\ldots,y_N$ where each $y \in \RR^n$ is drawn independently from the multivariate Gaussian distribution $\cN(0,I_n+\beta xx^\top)$ where $I_n$ denotes the $n \times n$ identity matrix. Our goal is to recover $x$, which represents a sparse direction of high variance in the data. However, it is impossible to differentiate $x$ from $-x$, so we actually aim to recover $x$ up to sign, or equivalently, to recover $xx^\top$.

\end{itemize}

\noindent With varying degrees of realism, these serve as simplified models for large-scale inference problems on large data sets.\footnote{Sparse PCA seems to qualitatively capture some real-world challenges, whereas planted clique gained popularity largely due to its connections with other problems; see Section~\ref{sec:reductions} on average-case reductions.} As such, we will typically be interested in an asymptotic regime where $n$ --- which will always be some notion of ``dimension'' or ``problem size'' --- tends to infinity. This means we are really considering a sequence of inference problems, indexed by $n$, where other parameters such as $k,N,\beta$ might scale with $n$ in some prescribed way, or might be designated as fixed ``constants'' which do not depend on $n$. Our objective, as the statistician, can take various precise forms that will be covered later, but a natural one to have in mind for the problems above is \emph{exact recovery} with \emph{high probability}, that is, we aim to exactly determine the set of clique vertices or the matrix $xx^\top$, with success probability that provably tends to $1$ as $n \to \infty$. This probability is over the random instance (i.e., the random graph or the samples $y_i$) as well as any internal randomness used by our inference procedure.

\subsection{Statistical-Computational Gaps}

It is desirable to have a procedure that is \emph{statistically optimal}, meaning it succeeds under the widest possible conditions on the parameters such as $k,N,\beta$. Additionally, it is desirable to have a procedure that is \emph{computationally efficient}, meaning the statistician uses an algorithm of practical runtime, which we will usually take to mean a \emph{polynomial-time} (\emph{poly-time} for short) algorithm. However, there appears to be an inherent tension between statistical and computational efficiency, as the following examples show.

\begin{itemize}
\item {\bf Planted Clique}: The size (number of vertices) of the largest naturally-occurring clique in $G(n,1/2)$ concentrates near $2 \log_2 n$~\cite[Ch~11]{boll-book}, and it becomes possible to identify the planted clique (with high probability) as soon as $k \ge (2+\epsilon) \log_2 n$ for an arbitrary constant $\epsilon > 0$. Conversely, when $k \le (2-\epsilon) \log_2 n$, it is provably impossible to identify the planted clique, no matter what procedure is used.\footnote{Actually, $2 \log_2 n$ is known to be the information-theoretic threshold for the related \emph{detection} problem~\cite{AV-clique}, and I am not aware of the same being shown for recovery, although this is expected to be true. I later hope to make the point that detection and recovery should not be conflated, so I am already setting a bad example here!} We therefore call $k = 2 \log_2 n$ the \emph{statistical} or \emph{information-theoretic} threshold. However, the algorithm reaching this threshold involves an exhaustive (``brute force'') search over subsets of vertices of size $O(\log n)$, and has runtime $n^{O(\log n)}$. The best known \emph{poly-time} algorithms require $k \ge \epsilon \sqrt{n}$, again for an arbitrary constant $\epsilon > 0$, where the runtime has the form $n^{O(\log 1/\eps)}$~\cite{alon-clique}. Despite much effort over the last decades, no better algorithm has been found, and so we refer to $k \approx \sqrt{n}$ as the \emph{computational} or \emph{algorithmic} threshold. The ``phase diagram'' is depicted in Figure~\ref{fig:clique} with three regimes for $k$: ``impossible,'' ``hard,'' and ``easy.''\footnote{Here and throughout we use ``easy'' in the formal sense: there exists a poly-time algorithm. It is not meant as a judgment of how sophisticated (or not) the algorithm is!}

\item {\bf Sparse PCA}: Consider the scaling $N = \Theta(n)$, $k = \Theta(n^a)$, and $\beta = \Theta(n^b)$ for constants $a \in (0,1)$ and $b \in \RR$. There is extensive literature on this model and the results are summarized in Figure~1 of~\cite{BB-opt}. Exact recovery is information-theoretically impossible when $b < \frac{1}{2}(a-1)$ but possible when $b > \frac{1}{2}(a-1)$ by exhaustive search. Poly-time algorithms are known when $b > 0$ or $b > a-\frac{1}{2}$, but no such algorithms are known in the remaining ``hard'' region $\frac{1}{2}(a-1) < b < \min\{0,a-\frac{1}{2}\}$. Figure~\ref{fig:sparse-pca} shows the resulting 2-dimensional phase diagram. Notably,~\cite{BR-reduction} was first to bring to light the inherent statistical-computational tradeoff.
\end{itemize}

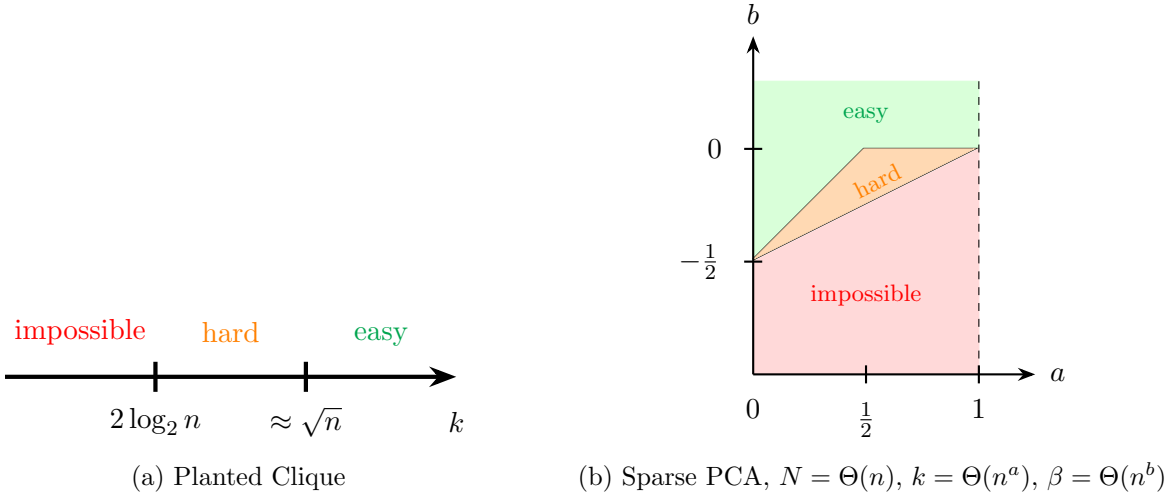
\begin{figure}
    \centering
    \begin{subfigure}[t]{0.4\textwidth}
        \centering
        \begin{tikzpicture}[xscale=2]
            \def \d{.2}; 
            \def \y{.6}; 
            \draw[ultra thick, -Stealth] (0,0) -- (3,0);
            \draw[ultra thick] (1,-1*\d) -- (1,\d);
            \draw[ultra thick] (2,-1*\d) -- (2,\d);
            \node at (1,-1*\y) {$2 \log_2 n$};
            \node at (2,-1*\y) {$\approx \sqrt{n}$};
            \node at (3,-1*\y) {$k$};
            \node at (.5, \y) {\color{red} impossible};
            \node at (1.5, \y) {\color{orange} hard};
            \node at (2.5, .90*\y) {{\color{Green} easy}};
        \end{tikzpicture}
        \caption{Planted Clique}
        \label{fig:clique}
    \end{subfigure}%
    ~ 
    \begin{subfigure}[t]{0.6\textwidth}
        \centering
        \begin{tikzpicture}[scale=3]
            \def \d{.04}; 
            \def \y{.1}; 
            \filldraw[green!15!white] (0,-.5) -- (.5,0) -- (1,0) -- (1,.3) -- (0, .3) -- (0, -.5);
            \node at (.5, .15) {\color{Green} \footnotesize{easy}};
            \filldraw[red!15!white] (0,-1) -- (1,-1) -- (1,0) -- (0,-.5) -- (0,-1);
            \node at (.5, -.65) {\color{red} \footnotesize{impossible}};
            \begin{scope}[scale=.98]
                \draw[thick] (0,-.5) -- (1,0) -- (.5,0) -- (0,-.5);
                \filldraw[orange!30!white] (0,-.5) -- (1,0) -- (.5,0) -- (0,-.5);
            \end{scope}
            \node at (.55, .-.14) {
                \rotatebox{26.6}{\color{orange} \footnotesize{hard}}
                };
            \draw[thick, -Stealth] (0,-1) -- (1.25, -1);
            \draw[thick, -Stealth] (0,-1) -- (0,.5);
            \node at (0,.5+\y) {$b$};
            \node at (1.25+\y,-1) {$a$};
            \draw[thick] (-1*\d,-.5) -- (\d,-.5);
            \node at (-1*\d-2*\y,-.5) {$-\frac{1}{2}$};
            \draw[thick] (-1*\d,0) -- (\d,0);
            \node at (-1*\d-1.3*\y,0) {$0$};
            \node at (0, -1 - 1.5*\y) {$0$};
            \draw[thick] (.5,-1*\d-1) -- (.5,\d-1);
            \node at (.5, -1 - 2*\y) {$\frac{1}{2}$};
            \draw[thick] (1,-1*\d-1) -- (1,\d-1);
            \node at (1, -1 - 1.5*\y) {$1$};
            \draw[dashed] (1,-1) -- (1,.3);
        \end{tikzpicture}
        \caption{Sparse PCA, $N = \Theta(n)$, $k = \Theta(n^a)$, $\beta = \Theta(n^b)$}
        \label{fig:sparse-pca}
    \end{subfigure}
    \caption{Phase diagrams for the two models defined in Section~\ref{sec:mot-ex}.}
\end{figure}

\noindent Some parts of these phase diagrams can be confirmed rigorously: in the ``easy'' regime we can demonstrate an algorithm and prove that it works, and in the ``impossible'' regime we can apply tools from classical statistics to prove that no successful estimator exists (regardless of runtime). However, both examples above exhibit a so-called \emph{statistical-computational gap} (\emph{stat-comp gap} for short), meaning there appears to be a ``possible-but-hard'' regime. While we can establish ``possible'' by analyzing a brute-force estimator, the ``hard'' aspect is more elusive and will be our main focus in this survey: is there fundamentally no efficient (poly-time) algorithm here, or have we just not discovered the right algorithm yet? We want to find better and better algorithms, but we do not want to keep searching indefinitely for a better algorithm when it may be futile, so we'd like a way to prove the hardness is fundamental, in some appropriate sense. The urgency of resolving this is amplified by the fact that stat-comp gaps occur in many different settings, and we mention a few more of the most ``classical'' examples below. See Section~\ref{sec:list} for an even longer list of examples.

\begin{itemize}
    \item {\bf Spiked Matrix Models} (e.g.~\cite{miolane-survey}): This refers to a general class of models where a random matrix (``noise'') is perturbed by a low-rank ``signal.'' The sparse PCA problem defined in Section~\ref{sec:mot-ex} is a special case of the \emph{spiked Wishart} model, where more generally the ``spike'' $x$ can be drawn from any distribution (the ``prior'') over unit vectors. There is also a \emph{negatively spiked} variant where $\beta \in [-1,0)$; see~\cite{sk-cert}. In the related \emph{spiked Wigner} model we observe an $n \times n$ matrix $Y = \lambda xx^\top + Z$ where $\lambda \ge 0$ is the \emph{SNR (signal-to-noise ratio)}, the spike $x$ is again drawn from some prior over unit vectors, and $Z$ is an i.i.d.\ (or symmetric) Gaussian matrix. The goal is to recover (perhaps approximately) $xx^\top$, given $Y$.

    \item {\bf Tensor PCA} (e.g.~\cite{RM-tensor-pca}): This is a generalization of the above spiked Wigner model to tensors, meaning multi-dimensional arrays. In the third-order case, we observe an $n \times n \times n$ tensor $Y = \lambda x^{\otimes 3} + Z$ where $\lambda \ge 0$ is the SNR, $Z$ is an i.i.d.\ Gaussian tensor, and the rank-1 tensor $x^{\otimes 3}$ is defined as $(x^{\otimes 3})_{ijk} = x_i x_j x_k$. The goal is to recover $x$, given $Y$.

    \item {\bf Stochastic Block Model} (e.g.~\cite{moore-sbm-survey,abbe-survey}): This is a popular model for community detection in random graphs. An $n$-vertex graph is generated by first partitioning the vertices into $q$ ``communities'' and then connecting each pair of vertices with probability $p$ for within-community pairs or probability $q$ for across-community pairs. The goal is to infer the hidden partition, given the graph.
    
\end{itemize}

\noindent Some survey articles about stat-comp gaps from various perspectives include~\cite{ZK-phys-survey,notes-phys,WX-survey,sos-survey,ld-notes,ogp-survey,GMZ-phys-survey,turing-survey}. We will discuss many of these perspectives later in Section~\ref{sec:relations}.

\subsection{Characteristics of High-Dim Stat Problems}

Throughout this survey, I will often refer to some informal notion of ``natural'' high-dim stat problems, and sometimes make general claims that are intended to apply only to problems of this style. The examples above aim to clarify the meaning of this. Section~\ref{sec:conj} on the low-degree conjecture will discuss the prospect of formally defining the class of problems we are interested in. For now, let's recap some of the common features of high-dim stat problems.

First, the meaning of ``high-dimensional'' is that the number of unknowns scales up with the amount of data, as reflected by the scaling $N = \Theta(n)$ in the sparse PCA example above. In contrast, the ``classical'' regime in statistics would be many samples ($N \to \infty$) coming from a fixed distribution (i.e., $n,k,\beta$ fixed), which tends not to have stat-comp gaps. In the classical regime, one expects to approach near-perfect inference as $N$ increases, but in the high-dimensional regime there is no guarantee of this, depending on how the various parameters scale.

Throughout, our focus will be on problems that are fully Bayesian, meaning the statistician has full knowledge of the distribution that generated the data, including all its parameters. For instance, in the planted clique problem, the location of the clique is drawn from the uniform prior over all size-$k$ subsets of $[n]$, and $k$ is known. We should not imagine the clique location as fixed, or else there exists a trivial algorithm with the correct answer hard-coded! For ``upper bounds'' (algorithm design and analysis), it may be desirable to relax the Bayesian assumption and consider inputs with components that are arbitrary and unknown (as in \emph{minimax} theory~\cite{minimax}), or even adversarial (as in \emph{semirandom models}~\cite{semirandom} or \emph{contamination models}~\cite{robust-est,robust-stat-book}). However, our main focus will be on lower bounds, where it commonly suffices to show that even some ``easier'' Bayesian problem is hard.

Another common feature of the problems above is the presence of a ``planted signal'' that we aim to recover, although later in Section~\ref{sec:tasks} we will also discuss ``non-planted'' optimization and refutation problems where the input is pure random ``noise.'' Furthermore, the distribution over inputs is highly symmetric. For instance, the planted clique distribution (including the random choice of clique vertices) is invariant under permutation of the vertices.

Finally, why do we care to study this class of problems, aside from an inherent mathematical interest? To varying degrees, some of these models may capture real-world phenomena, at least qualitatively. This may be especially true in settings like group testing (see~\cite{grp-testing}) or cryptography (see~\cite{crypto-graph,crypto-hyperloops,crypto-random-subgraph}) where we have control over the distributions involved. Even the planted clique model, which has no strong practical motivation, serves an important role as a testbed for building mathematical tools that we hope are widely applicable. One end goal is to develop a user-friendly theory so that whenever a new problem comes along, we can systematically find the \emph{optimal} algorithm(s) --- including a matching hardness result --- and understand the inherent tradeoffs between statistical resources (quantity and quality of data) and computational resources (compute power). As this theory grows stronger, it can be applied to increasingly complex models inspired by applications. Finally, if we've proved hardness for an idealized version of our real-world problem, this likely means we should not expect to solve the (presumably more difficult) real thing, and should instead focus on making the problem easier (e.g., by collecting more data).

\subsection{Average-Case Complexity Theory?}

For high-dim stat problems, we would like a way to prove convincing hardness results, but we cannot hope to prove outright that no poly-time algorithm exists, as this would resolve the notorious P vs NP question. The notion of NP-hardness from classical complexity theory has been widely successful at classifying problems as ``easy'' or (conjecturally) ``hard'' for \emph{worst-case} computational tasks, where ``worst-case'' means we demand an efficient algorithm that works for \emph{every} possible instance (input). In contrast, our setting is that of \emph{average-case} complexity, where a successful algorithm need only work for ``most'' random inputs drawn from a specific distribution. To illustrate the difference, the maximum clique problem is NP-hard but this doesn't stop us from efficiently solving planted clique for \emph{some} values of $k$. We currently do not have the tools to prove hardness of high-dim stat problems conditional on P $\ne$ NP. Instead, a number of different approaches exist for predicting and explaining stat-comp gaps (unfortunately, none quite as satisfying as the theory of NP-hardness; see Section~\ref{sec:relations}). In the next section, we will describe one of these theories.

\section{Low-Degree Algorithms}
\label{sec:low-deg}

What do the best known poly-time algorithms look like for the types of problems described in the previous section? Taking the planted clique problem for example, the algorithm of~\cite{alon-clique} for recovering a clique of size $k=O(\sqrt{n})$ is a \emph{spectral method}, meaning it uses the leading eigenvector(s) of some matrix associated to the graph (in this case, the adjacency matrix). If we ignore logarithmic factors and are satisfied with recovering a clique of size $k = O(\sqrt{n \log n})$, an even simpler approach will work: simply pick out the vertices of largest degree (number of incident edges). Related to the task of recovering a planted clique is the simpler task of \emph{detecting} a planted clique, where the goal is to decide whether a given graph was drawn from the planted clique distribution (for some given $k$) or the ``null'' distribution $G(n,1/2)$. As long as $k = \omega(\sqrt{n})$, an extremely simple algorithm solves detection with high probability: threshold the total edge count. The analysis is a routine calculation of mean and variance, combined with Chebyshev's inequality.

Variations of these basic algorithmic ideas have proved to be widely useful for obtaining the best known algorithmic guarantees for a variety of high-dim stat problems. In general, spectral methods may use a more sophisticated choice of matrix (with some examples mentioned in Section~\ref{sec:alg-captured}). In some settings, rather than simply counting total edges or edges incident to a particular vertex, it may be fruitful to count instances of some other subgraph, like triangles or 4-cycles. We will see that all the algorithms we've mentioned here can be put under a common umbrella: they are all based on \emph{low-degree polynomials}.

\subsection{Setup}

We now introduce low-degree polynomials as a class of potential approaches for high-dimensional detection and recovery problems. Consider for example the planted clique detection problem, with the input graph represented using the variables $Y = (Y_{ij})_{1 \le i < j \le n}$ where $Y_{ij} \in \{0,1\}$ encodes the presence (1) or absence (0) of edge $(i,j)$. We will consider a multivariate polynomial $f \in \RR[Y]$, which describes a function $f: \{0,1\}^{\binom{n}{2}} \to \RR$. One could imagine solving the detection problem by thresholding the output of some polynomial, for example:
\begin{itemize}
    \item edge count: $f(Y) = \sum_{1 \le i < j \le n} Y_{ij}$,
    \item triangle count: $f(Y) = \sum_{1 \le i < j < k \le n} Y_{ij} Y_{ik} Y_{jk}$.
\end{itemize}
We will think of polynomials as algorithms, where the polynomial degree serves as a measure of the algorithm's complexity. For instance, the edge count has degree 1, and the triangle count has degree 3. More generally, counting copies of some $d$-edge subgraph has degree $d$. The degree bound $D = D_n$ will often grow with the problem size $n$ in some prescribed way. Historically, this viewpoint arose from the line of work~\cite{pcal,sos-detect,HS-bayesian,hopkins-thesis}. To be clear, this is distinct from the concept of ``polynomial time,'' which means the runtime scales as a polynomial function of $n$; here we are considering algorithms that literally compute polynomial functions of the input variables.

For recovery (rather than detection), we are often interested in recovering a hidden vector, and by symmetry it often suffices to recover a single entry. For instance, in the planted clique recovery problem we can imagine estimating $x := \One_{1 \in S}$, the $\{0,1\}$-valued indicator that vertex $1$ belongs to the set of clique vertices $S$. One could imagine estimating this based on the output of some polynomial, a simple example being the degree\footnote{This is the graph-theoretic degree, not to be confused with polynomial degree!} of vertex $1$: $f(Y) = \sum_{2 \le i \le n} Y_{1,i}$, which is a degree-1 polynomial.

\subsection{Heuristic Degree-Runtime Correspondence}
\label{sec:d-r-corr}

Our viewpoint will be to treat polynomial degree as an intrinsic measure of an algorithm's complexity. This ``degree complexity'' gives a more tractable alternative to the more traditional ``time complexity.'' For intuition on how these two relate, it will be helpful to have in mind the following heuristic correspondence between polynomial degree and runtime, inspired by~\cite{sos-detect,HS-bayesian,hopkins-thesis}. This is meant as an informal guiding principle that should not be taken too literally at this point.

\begin{hyp}[Informal]
\label{hyp:poly-time}
For ``natural'' high-dim stat problems,
\[ \emph{degree-$O(1)$ polynomials} \;\subseteq\; \emph{polynomial-time algorithms} \;\subseteq\; \emph{degree-$O(\log n)$ polynomials} \]
where ``$A \subseteq B$'' means class $B$ is at least as powerful as class $A$.
\end{hyp}

\noindent As usual, $n$ is some natural notion of problem size, which will always be polynomially related to the number of input variables. The first ``$\subseteq$'' above is justified because a degree-$O(1)$ polynomial has $n^{O(1)}$ terms, so a naive term-by-term computation allows it to be evaluated in polynomial time (assuming the coefficients are easy to compute). The second ``$\subseteq$'' is a much more nuanced claim, since there are certainly some high-degree polynomials that can be evaluated quickly (like the determinant of a matrix), and we are essentially asserting that such polynomials are not useful\footnote{Or rather, not any more useful than their lower degree counterparts.} for ``natural'' high-dim stat problems. The choice of logarithmic degree here originates from~\cite[Conjecture~2.2.4]{hopkins-thesis}, where the slightly higher degree $(\log n)^{1+\Omega(1)}$ is used, to be safe. We will discuss the justification for this in Section~\ref{sec:interp}, but for now, suffice it to say that various powerful algorithms can be captured by degree-$O(\log n)$ polynomials. Notably, we will see in Remark~\ref{rem:log-deg} that certain spectral methods can be implemented by a degree-$O(\log n)$ polynomial via power iteration. For this reason, ``low'' degree will often mean degree $O(\log n)$.

For super-polynomial runtimes, a more general degree-runtime correspondence is postulated by~\cite[Hypothesis~2.1.5]{hopkins-thesis}, namely, degree $D$ corresponds to runtime roughly $n^{\tilde{\Theta}(D)}$ or equivalently $\exp(\tilde{\Theta}(D))$, which is roughly the number of terms in the polynomial. Here, $\tilde{\Theta}(\cdot)$ hides a $\polylog(n)$ factor. Notably, degree $n^\delta$ for a constant $\delta > 0$ corresponds to runtime $\exp(n^{\delta \pm o(1)})$.

\begin{hyp}[Informal]
\label{hyp:higher-D}
For ``natural'' high-dim stat problems, for some large enough constant $C > 0$ and any growing sequence $D = D_n$,
\[ \emph{$\exp(O(D/\log^C n))$-time algorithms} \;\subseteq\; \emph{degree-$D$ polynomials} \;\subseteq\; \emph{$n^{O(D)}$-time algorithms} \]
where ``$A \subseteq B$'' means class $B$ is at least as powerful as class $A$.
\end{hyp}

\noindent Here it is the second ``$\subseteq$'' that is justified by naive term-by-term evaluation of a polynomial (see Section~\ref{sec:eval-poly}), while the first ``$\subseteq$'' is more subtle and will need to be justified later (see Section~\ref{sec:list}).

\subsection{Defining ``Success'' for Detection and Recovery}
\label{sec:def-success}

We hope to quantify what polynomial degree is required to solve a given statistical task. Given that a polynomial may not necessarily output a binary value, how exactly should we define ``success'' for a low-degree algorithm? We now address this for the detection and recovery tasks, with discussion of other tasks (such as refutation and optimization) deferred to Section~\ref{sec:tasks}. Here we present the definitions, and in Section~\ref{sec:commentary} we include a deeper discussion and address some common questions.

\paragraph{Detection.} For detection, we will in general consider two sequences of distributions $\PP = (\PP_n)_{n \ge 1}$ and $\QQ = (\QQ_n)_{n \ge 1}$, where $\PP_n$ and $\QQ_n$ are distributions over (a subset of) $\RR^N$ for some $N = N_n$. For instance, $N = \binom{n}{2}$ in the case of planted clique. As usual, consider the limit $n \to \infty$, where other parameters such as $k$ will scale with $n$ in some prescribed way. For simplicity we will often omit ``sequence of,'' for instance, speaking of a polynomial $f: \RR^N \to \RR$ when we really mean a sequence $f = f_n$ of polynomials, one for each problem size.

We first formally define the detection task in both its ``strong'' and ``weak'' variants.

\begin{definition}\label{def:det}
A {\bf\emph{test}} is a function $t: \RR^N \to \{0,1\}$ that takes in a sample from either $\PP$ or $\QQ$ and outputs a ``guess'': $1$ for $\PP$, or $0$ for $\QQ$. (More formally, a test is a sequence of such functions, one for each value of $n$.)
\begin{itemize}
\item We say {\bf\emph{strong detection}} is achieved if the guess is correct with high probability, that is,
\[ \Pr_{Y \sim \PP}(t(Y) = 0) + \Pr_{Y \sim \QQ}(t(Y) = 1) = o(1) \qquad \text{as } n \to \infty. \]
\item {\bf\emph{Weak detection}} means non-trivial advantage over a random guess, that is,
\[ \Pr_{Y \sim \PP}(t(Y) = 0) + \Pr_{Y \sim \QQ}(t(Y) = 1) = 1 - \Omega(1) \qquad\text{as } n \to \infty. \]
\end{itemize}
\end{definition}

\noindent We will use the following analogous notion of success for polynomial tests.

\begin{definition}\label{def:sep}
A polynomial $f: \RR^N \to \RR$ is said to {\bf\emph{strongly separate}} distributions $\PP$ and $\QQ$ if
\[ \sqrt{\max\left\{\Vop_{Y \sim \PP}[f(Y)], \, \Vop_{Y \sim \QQ} [f(Y)]\right\}} = o\left(\left|\Eop_{Y \sim \PP} [f(Y)] - \Eop_{Y \sim \QQ} [f(Y)]\right|\right), \]
and is said to {\bf\emph{weakly separate}} $\PP$ and $\QQ$ if
\[ \sqrt{\max\left\{\Vop_{Y \sim \PP}[f(Y)], \, \Vop_{Y \sim \QQ} [f(Y)]\right\}} = O\left(\left|\Eop_{Y \sim \PP} [f(Y)] - \Eop_{Y \sim \QQ} [f(Y)]\right|\right). \]
\end{definition}

Let's assume $\PP$ and $\QQ$ have finite moments of all orders so that all the quantities above are guaranteed to exist. The significance of the above definition is as follows. Suppose $f$ strongly separates $\PP,\QQ$, and suppose we are able to compute the value $f(Y)$. Then by Chebyshev's inequality, we can achieve strong detection by thresholding $f(Y)$ at the appropriate cutoff, namely the midpoint between the two expectations. Similarly, weak separation implies that weak detection can be achieved by thresholding $f(Y)$, although the right threshold might not be the midpoint anymore~\cite[Prop~3.2]{ranked}. In other words, strong (or weak) separation provides a natural sufficient condition for strong (respectively, weak) detection by polynomials, based on the first two moments. For a given detection problem, we will aim to prove both upper and lower bounds on the polynomial degree $D = D_n$ required in order to have strong (or weak) separation by some degree-$D$ polynomial. If separation fails for some $D$, we will interpret this as suggesting hardness of detection by algorithms of ``less powerful'' runtime as per Section~\ref{sec:d-r-corr}. For instance, if we want to argue that poly-time algorithms cannot achieve strong (or weak) detection, we will aim to rule out strong (respectively, weak) separation by degree-$D$ polynomials for some $D = \omega(\log n)$, or ideally a larger $D$ to be safe.

\paragraph{Recovery.} Moving on to recovery, our observation $Y$ now comes from the distribution $\PP = (\PP_n)_{n \ge 1}$, and we will aim to estimate some scalar value $x$. For instance, in the case of planted clique a natural choice is $x := \One_{1 \in S}$ where $S$ is the set of clique vertices, as discussed above. With some abuse of notation, we write $(x,Y) \sim \PP$ for the joint distribution of $(x,Y)$ under the planted model. The following notion of success was first defined in~\cite{SW-estimation}. We write $\RR[Y]_{\le D}$ for the multivariate polynomials in $Y$ of degree at most $D$.

\begin{definition}\label{def:mmse}
The {\bf\emph{degree-$D$ minimum mean squared error}} is
\[ \MMSE_{\le D} := \inf_{f \in \RR[Y]_{\le D}} \Eop_{(x,Y) \sim \PP} [(f(Y)-x)^2]. \]
\end{definition}

\noindent For a given recovery problem, we will aim to characterize the behavior of $\MMSE_{\le D}$ for various values of $D$. The trivial estimator $f(Y) \equiv \EE[x]$ has mean squared error $\Var(x)$, so in the ``hard'' regime of parameters we will often aim to prove that low-degree polynomials cannot significantly beat this: $\MMSE_{\le D} \ge (1-o(1)) \Var(x)$, say for some $D = \omega(\log n)$. 

Why do we define ``success'' for detection and recovery in these particular ways? One quick answer is that these definitions hit a ``sweet spot'' where low-degree algorithms do tend to achieve them in the ``easy'' regime, and we also have mathematical tools for proving that no low-degree algorithm can achieve them in the ``hard'' regime. More detailed commentary on the definitions is deferred to Section~\ref{sec:commentary}.

\paragraph{Proof techniques.} We now touch briefly on the most basic techniques for proving low-degree lower bounds, with a more complete treatment deferred to Section~\ref{sec:techniques}. For now, we focus on detection, which tends to be the easiest setting to analyze. Suppose we aim to rule out strong detection by any degree-$D$ polynomial. By rewriting the definition, this is equivalent to showing that the following ratio is $O(1)$:
\[ \sup_{f \in \RR[Y]_{\le D}} \frac{\Eop_{Y \sim \PP} [f(Y)] - \Eop_{Y \sim \QQ} [f(Y)]}{\sqrt{\max\left\{\Vop_{Y \sim \PP}[f(Y)], \, \Vop_{Y \sim \QQ} [f(Y)]\right\}}}. \]
We will relax this to a larger quantity that is more tractable to analyze, and aim to show that even this larger quantity is $O(1)$. Specifically, we drop the variance under $\PP$ from the denominator and also take $\Eop_{Y \sim \QQ} [f(Y)] = 0$ without loss of generality, to arrive at the \emph{advantage},
\begin{equation}\label{eq:adv}
\Adv_{\le D} := \sup_{f \in \RR[Y]_{\le D}} \frac{\EE_{Y \sim \PP}[f(Y)]}{\sqrt{\EE_{Y \sim \QQ}[f(Y)^2]}}.
\end{equation}
This is a key quantity that goes by a few different names. For example, it can also be denoted $\|L^{\le D}\|$ due to its characterization as the \emph{norm of the low-degree likelihood ratio}, and is also closely related to the \emph{chi-squared divergence}. We will revisit these viewpoints in Section~\ref{sec:adv}.

In light of the above, to rule out strong separation it suffices to show $\Adv_{\le D} = O(1)$. Crucially, $\Adv_{\le D}$ admits an explicit formula,
\[ \Adv_{\le D}^2 = \sum_{i=0}^m \left(\EE_{Y \sim \PP}[h_i(Y)]\right)^2, \]
where $h_0,\ldots,h_m$ are any choice of orthonormal polynomials with respect to $\QQ$ that form a basis for $\RR[Y]_{\le D}$. See Proposition~\ref{prop:adv-c} for further details and the proof. As long as $\QQ$ has independent coordinates --- as in $G(n,1/2)$ for instance --- it is often tractable to construct orthogonal polynomials and compute $\Adv_{\le D}$ explicitly using the above formula. This was a powerful observation that fueled the initial wave of success for the low-degree framework. The idea of proving low-degree lower bounds in this way first appeared in two concurrent works~\cite{sos-detect,HS-bayesian}, both inspired by~\cite{pcal}, and the framework was further refined by~\cite{hopkins-thesis}.

\subsection{What Do We Hope to Prove?}
\label{sec:hope}

Having defined ``success'' for a low-degree polynomial algorithm, we can now illustrate the type of results we might aim to prove for a given statistical task. As usual, we'll start with the planted clique example, considering both detection and recovery. We recall here the setting.

\begin{definition}[Planted Clique, Binomial\footnote{The clique size concentrates near $k$ rather than being exactly $k$. This is not expected to make a difference, but makes the analysis easier. Variations of the planted clique problem are discussed in~\cite{clique-equiv}.} Variant]
\label{def:pc}
The input is a random graph, represented as an element $Y \in \{0,1\}^{\binom{n}{2}}$.
\begin{itemize}
    \item Under the null distribution $\QQ$, $Y \sim G(n,1/2)$.
    \item Under the planted distribution $\PP$, each vertex is included in the set $S$ with probability $k/n$ independently, and the observed graph $Y$ is the union of $G(n,1/2)$ with a clique on $S$.
    \item For the recovery task, the quantity to estimate given $Y \sim \PP$ is $x := \One_{1 \in S}$, the indicator for vertex 1's presence in the clique.
\end{itemize}
\end{definition}

\noindent The following result shows that $k \approx \sqrt{n}$ is the threshold for both detection and recovery by $O(\log n)$-degree polynomials. This matches the conjectured computational threshold.

\begin{theorem}[Low-Degree Analysis for Planted Clique]
\label{thm:clique}
Consider the planted clique model as in Definition~\ref{def:pc}, with $k = \Theta(n^a)$ for a constant $a \in (0,1)$.
\begin{enumerate}
    \item (Hard regime) If $a < 1/2$ and $D = o\left(\frac{\log n}{\log\log n}\right)^2$ then
    \begin{itemize}
        \item no degree-$D$ polynomial weakly separates $\PP$ and $\QQ$; and
        \item $\MMSE_{\le D} \ge (1-o(1)) k/n$.
    \end{itemize}
    \item (Easy regime) If $a > 1/2$ then
    \begin{itemize}
        \item there is a degree-$1$ polynomial that strongly separates $\PP$ and $\QQ$; and
        \item for any constant $c > 0$ there is a constant $C = C(a,c) > 0$ such that $\MMSE_{\le C} \le n^{-c}$.
    \end{itemize}
\end{enumerate}
\end{theorem}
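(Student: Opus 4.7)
My plan is to bound the advantage $\Adv_{\le D}$ (which upper-bounds the separation ratio in Definition~\ref{def:sep}) using the orthonormal-basis formula. Under $\QQ = G(n,1/2)$ the centered edges $\tilde Y_{ij} := 2 Y_{ij} - 1$ are i.i.d.\ Rademachers, so the parity functions $\chi_\alpha(Y) := \prod_{e \in \alpha} \tilde Y_e$, indexed by edge subsets $\alpha \subseteq \binom{[n]}{2}$ with $|\alpha| \le D$, form an orthonormal basis for $\RR[Y]_{\le D}$. A one-line conditional-expectation calculation, using that each vertex lies in $S$ independently with probability $k/n$, gives $\Eop_\PP[\chi_\alpha] = \Pr(V(\alpha) \subseteq S) = (k/n)^{|V(\alpha)|}$, where $V(\alpha)$ denotes the set of vertices covered by $\alpha$. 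Plugging into the advantage formula yields
\[
\Adv_{\le D}^2 \;=\; \sum_{\alpha:\,|\alpha| \le D} (k/n)^{2|V(\alpha)|}.
\]
To rule out \emph{weak} separation I need the $\alpha \ne \emptyset$ part of this sum to be $o(1)$. Grouping by $(v,e) := (|V(\alpha)|,|\alpha|)$ and bounding the number of subgraphs by $\binom{n}{v}\binom{\binom{v}{2}}{e}$, each term is of order $(k^2/n)^v \binom{\binom{v}{2}}{e}$; for $k = n^a$ with $a < 1/2$, the factor $(k^2/n)^v = n^{-(1-2a)v}$ decays exponentially in $v$, while the binomial factor is largest on the ``clique diagonal'' $e \approx \binom{v}{2}$, i.e., $v \approx \sqrt{2D}$. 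A careful accounting of the clique-diagonal contribution produces the weak-separation lower bound in the window $D = o((\log n/\log\log n)^2)$.

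\paragraph{Hard regime, MMSE.} For recovery I would invoke the Schramm--Wein framework \cite{SW-estimation} underlying Definition~\ref{def:mmse}, which relates $\Var(x) - \MMSE_{\le D}$ to a squared sum of correlations $\Eop_\PP[x\,\chi_\alpha]$ against the same parity basis. For $x = \One_{1 \in S}$,
\[
\Eop_\PP[x\,\chi_\alpha] \;=\; \Pr\bigl(1 \in S \text{ and } V(\alpha) \subseteq S\bigr) \;=\; (k/n)^{|V(\alpha) \cup \{1\}|},
\]
so the relevant sum has the \emph{same} subgraph-counting structure as in detection, with the vertex set inflated by at most one. An analogous estimate shows this is $o(k/n) = o(\Var(x))$ in the same degree window, giving $\MMSE_{\le D} \ge (1 - o(1))\,k/n$.

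\paragraph{Easy regime.} For detection, a degree-$1$ edge count $f(Y) = \sum_{i<j} Y_{ij}$ suffices: direct moment computation gives $|\Eop_\PP[f] - \Eop_\QQ[f]| = \Theta(k^2)$ and $\Vop_\PP[f], \Vop_\QQ[f] = \Theta(n^2)$, so the strong-separation ratio is of order $k^2/n \to \infty$ when $a > 1/2$. For recovery, the degree of vertex~$1$, $f_0(Y) := \sum_{j \ne 1} Y_{1j}$, is itself a degree-$1$ polynomial whose conditional mean jumps by $\Theta(k)$ according to $\One_{1 \in S}$, against $O(\sqrt n)$ fluctuations; since $k = n^a \gg \sqrt n$, composing $f_0$ with a constant-degree polynomial approximant of the appropriate threshold (e.g., a Chebyshev-type polynomial on an interval well-separated from the crossing) yields a polynomial of some degree $C = C(a,c)$ whose $L^2(\PP)$-distance to $x$ is at most $n^{-c}$.

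\paragraph{Main obstacle.} The decisive technical step is squeezing out the sharper threshold $D = o((\log n/\log\log n)^2)$ rather than the easy $D = o(\log n)$ that follows from the crude bound $2^{\binom{v}{2}}$ on the number of $v$-vertex graphs. That crude bound is lossy because a generic edge set of size $e$ uses many more than the minimum $\sqrt{2e}$ vertices, so one must carefully balance $\binom{\binom{v}{2}}{e}$ against $(k^2/n)^v$ and exploit that the extremal contribution is concentrated on the clique diagonal. I expect to import this refined enumeration estimate directly from the planted-clique low-degree literature.
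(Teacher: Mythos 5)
Your treatment of hard-regime detection is correct and tracks the cited proofs: the parity basis $\chi_\alpha$ is orthonormal under $\QQ$, $\Eop_\PP[\chi_\alpha]=(k/n)^{|V(\alpha)|}$, and the sharp window $D=o((\log n/\log\log n)^2)$ reduces to the subgraph-enumeration estimate from~\cite{coloring-clique} that you correctly identify as the crux. The degree-$1$ detection upper bound is also fine, with a small correction: $\Vop_\PP[f]$ is $\Theta(k^3)$ (driven by the fluctuation of $\binom{|S|}{2}$), not $\Theta(n^2)$, once $a\ge 2/3$; the conclusion still holds since $k^{3/2}=o(k^2)$.

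There is a genuine gap in the hard-regime recovery argument. You state that the framework relates $\Var(x)-\MMSE_{\le D}$ to ``a squared sum of correlations $\Eop_\PP[x\chi_\alpha]$ against the same parity basis,'' but the $\chi_\alpha$ are orthonormal only under $\QQ$, not under $\PP$. The relevant quantity is $\Corr_{\le D}=\sqrt{c^\top M^{-1}c}$ with $c_\alpha=\Eop_\PP[x\chi_\alpha]$ and $M_{\alpha\beta}=\Eop_\PP[\chi_\alpha\chi_\beta]=(k/n)^{|V(\alpha\triangle\beta)|}$; this Gram matrix has genuine off-diagonal structure and cannot be dropped. Taking $\hat f=c$ only gives a lower bound $\Corr_{\le D}\ge\|c\|^2/\sqrt{c^\top M c}$, which is useless for bounding $\MMSE_{\le D}$ from below. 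The actual mechanism in~\cite{SW-estimation} is different: one lower-bounds $\Eop_\PP[f(Y)^2]$ by $\Eop_{S}[(\Eop[f(Y)\,|\,S])^2]$ via Jensen's inequality, transporting the problem to polynomials in the independent indicators $u_i=\One_{i\in S}$, and the resulting bound is expressed through cumulants rather than moments. The combinatorics are indeed similar in flavor, but not via the route you describe.

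There is a second gap in the easy-regime recovery argument. After centering, the conditional laws of $f_0$ given $1\notin S$ and $1\in S$ have means separated by $\Theta(k)$ and widths $\Theta(\sqrt n)$, so in rescaled units the two clusters sit within $\delta\approx n^{1/2-a}\to 0$ of the points $0$ and $1$. A Chebyshev amplification polynomial achieves error only $e^{-\Omega(C)}$, requiring $C=\Theta(\log n)$ to reach $n^{-c}$; and a raw power such as $(g/(k/2))^C$ is flat only at $0$, contributing error $\Theta(C^2 n^{-a})$ weighted by $\Pr(1\in S)$, which fails whenever $c>a$. To achieve $\MMSE_{\le C}\le n^{-c}$ for a constant $C=C(a,c)$ and arbitrary $c$, one needs a polynomial flat to order $m=\Theta(c/(2a-1))$ at both cluster centers (e.g., a beta-function-type integral polynomial), which yields error $\delta^{\Theta(m)}=n^{-\Theta((a-1/2)m)}$; one must also separately bound the contribution from the tail of $f_0$, where the polynomial grows. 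This finer construction is what the cited~\cite[Thm~4.8]{SW-estimation} carries out.
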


\begin{proof}
The lower bound for detection comes from~\cite[Lemma~2.4.1]{hopkins-thesis}, based on calculations that were implicit in~\cite{pcal}. The improved parameters here are based on~\cite[Prop~4.9]{coloring-clique}. The lower bound for recovery is~\cite[Thm~3.7]{SW-estimation}. For the upper bounds, the degree-1 polynomial for detection is simply the total edge count, and the result for recovery is based on~\cite[Thm~4.8]{SW-estimation}.
\end{proof}

\noindent Below, we make some comments to aid the interpretation of Theorem~\ref{thm:clique}.
\begin{itemize}
    \item {\bf Value of the MMSE}: The ``trivial'' MSE (mean squared error) is $\Var(x) = k/n (1 - k/n) = (1-o(1)) k/n$, while an MSE value of $o(1/n)$ guarantees (by thresholding the estimator) exact recovery with high probability. Thus, low-degree MMSE transitions from near-trivial to near-perfect at $k \approx \sqrt{n}$. We note that $\MMSE_{\le D}$ is directly related to the \emph{vector MMSE}, where we aim to estimate $(\One_{1 \in S}, \One_{2 \in S}, \ldots, \One_{n \in S})$ using a vector of polynomials~\cite[Section~1]{SW-estimation}.
    \item {\bf Value of $D$ in the hard regime}: While degree $O(\log n)$ (or slightly higher, $\log^{1+\eps} n$~\cite{hopkins-thesis}) has become the standard benchmark for probing poly-time computation, in the hard regime it is desirable to rule out polynomials of even higher degree if possible, just to be safe, and to understand the true ``degree complexity.'' The lower bound above rules out polynomials of degree roughly $\log^2 n$, which should be nearly optimal: planted clique has an algorithm of quasipolynomial time $n^{O(\log n)}$ based on a brute-force search for cliques of size $O(\log n)$, and this should be captured by a polynomial of degree roughly $\log^2 n$ (the number of edges in such a clique). In contrast to planted clique, many other problems have a more dramatic phase transition, requiring degree $n^{\Omega(1)}$ in the hard regime. In this sense, planted clique is among the easiest of the hard problems, which may explain its success as a starting point for reductions (see Section~\ref{sec:reductions}).
    \item {\bf Value of $D$ in the easy regime}: We have viewed the threshold on a coarse scale $k = \Theta(n^a)$ for a constant $a \ne 1/2$. The upper bounds were quite simple, requiring only constant degree, since we are actually deep into the easy regime by a factor of $n^{\Omega(1)}$. We may want to zoom in and investigate the degree complexity closer to the threshold, say when $k = c\sqrt{n}$ for a constant $c > 0$. It seems this has not entirely been worked out for the clique problem, but there are other settings where degree $\Theta(\log n)$ is known to be both necessary and sufficient for recovery in the easy regime, e.g.~\cite[Thm~2.6]{sharp-est}.
    \item {\bf Detection-recovery gaps}: The detection and recovery thresholds match for planted clique, but need not match in general. One example of such a \emph{detection-recovery gap} occurs in the \emph{planted dense subgraph} problem, a generalization of planted clique where the clique is replaced by an Erd\H{o}s--R\'{e}nyi graph of density higher than the ambient graph (see~\cite{det-rec-reduction}). In some regimes, detection becomes easier than recovery due to a trivial test based on the total edge count. Here, the low-degree thresholds for detection and recovery are different, and coincide with the (best known) computational thresholds for detection and recovery, respectively~\cite{SW-estimation,subhypergraph}. See for instance~\cite[Thm~2.7]{dense-cycles} for another example of a detection-recovery gap, with a side-by-side comparison of the detection and recovery results.
    \item {\bf Upper bounds}: While our main focus is on hardness results, the low-degree upper bounds play an important supporting role. Lower bounds are most meaningful when we know the same notion of success being ruled out is actually achievable in the easy regime. We revisit this in Section~\ref{sec:comm-upper}.
\end{itemize}

\noindent A few additional points are illustrated by the next example, sparse PCA detection.

\begin{definition}[Sparse PCA Detection, Binomial Variant]
\label{def:spca}
The input is a matrix $Y \in \RR^{n \times N}$.
\begin{itemize}
    \item Under the null distribution $\QQ$, $Y$ is i.i.d. $\cN(0,1)$.
    \item Under the planted distribution $\PP$, first a hidden vector $x \in \RR^n$ is generated with i.i.d. entries of the form
    \[ x_i = \begin{cases}1/\sqrt{k} & \text{with probability } k/(2n), \\ -1/\sqrt{k} & \text{with probability } k/(2n), \\ 0 & \text{with probability } 1-k/n. \end{cases} \]
    Then each column of $y$ is drawn independently from $\cN(0,I+\beta xx^\top)$.
\end{itemize}
Suppose $\beta > 0$ is a constant and $n/N \to \gamma$ for a constant $\gamma > 0$. Define the SNR parameter $\lambda := \beta/\sqrt{\gamma}$, which is also a constant.
\end{definition}

What are the best known algorithms for this detection task? We focus on strong detection here, since weak detection is easy for any constant $\lambda > 0$ using the trace of the sample covariance matrix. Let's also assume $1 \ll k \ll n$, which makes strong detection information-theoretically possible. The following two algorithmic results are known:
\begin{itemize}
\item When $\lambda > 1$, strong detection becomes easy (in poly time) by thresholding the maximum eigenvalue of the sample covariance matrix. This is the ``BBP'' transition in random matrix theory~\cite{BBP,BS-spiked}, named after Baik, Ben Arous, and P{\'e}ch{\'e}.
\item When $\lambda < 1$, the best known algorithms have runtime $\exp(\tilde{O}(k^2/n)) + \poly(n)$~\cite{subexp-sparse,anytime-pca}, with poly-time algorithms appearing when $k \lesssim \sqrt{n}$~\cite{AW-sparse,cov-thresholding}.
\end{itemize}
The lower bound below shows that the combination of these two algorithms cannot be improved within the low-degree framework.

\begin{theorem}[Low-Degree Analysis for Sparse PCA]
Consider the sparse PCA detection problem as defined in Definition~\ref{def:spca}. If
\[ \lambda < 1 \qquad\text{and}\qquad D = o(k^2/n) \]
then no degree-$D$ polynomial strongly separates $\PP$ and $\QQ$.
\end{theorem}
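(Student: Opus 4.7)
The plan is to show $\Adv_{\le D} = O(1)$, which by the reduction indicated in Section~\ref{sec:def-success} immediately rules out strong separation by any degree-$D$ polynomial. Since $\QQ$ has i.i.d.\ standard Gaussian entries, the orthonormal basis for $\RR[Y]_{\le D}$ consists of products $H_\alpha(Y) = \prod_{i,\ell}\widehat{\mathrm{He}}_{\alpha_{i\ell}}(Y_{i\ell})$ of normalized probabilist's Hermite polynomials, so Proposition~\ref{prop:adv-c} gives $\Adv_{\le D}^2 = \sum_{|\alpha|\le D}(\EE_{Y\sim\PP}[H_\alpha(Y)])^2$.

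The first main step is to derive a closed form for this sum. Conditioning on the spike $x$, the columns $y_\ell$ are independent $\cN(0,I+\beta xx^\top)$, so the expectation factorizes across columns, and Wick's formula for Gaussian Hermite moments expresses $\EE[H_{\alpha_\ell}(y_\ell)\mid x]$ as a sum over perfect matchings of the index-slots of $\alpha_\ell$, with each matched pair $\{i,j\}$ contributing $(\beta xx^\top)_{ij}=\beta x_i x_j$. Squaring, introducing an independent copy $x'$, and summing over index assignments collapses each matched pair into a factor of $\langle x,x'\rangle$; after bookkeeping one obtains the identity (standard in the spiked-Wishart literature)
\[ \Adv_{\le D}^2 \;=\; \sum_{d=0}^{\lfloor D/2\rfloor}\binom{N/2+d-1}{d}\,\beta^{2d}\,\EE_{x,x'}\bigl[\langle x,x'\rangle^{2d}\bigr]. \]
This can be cross-checked by computing $\EE_{Y\sim\QQ}[L(Y;x)L(Y;x')] = (1-\beta^2\langle x,x'\rangle^2)^{-N/2}$ directly via Sherman--Morrison on $(I+\beta xx^\top)^{-1}+(I+\beta x'x'^\top)^{-1}-I$, and recognizing $\Adv_{\le D}^2$ as its Hermite-degree-$\le D$ projection.

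Next I would bound $\EE_{x,x'}[\langle x,x'\rangle^{2d}]$ using the sparse structure. Writing $\langle x,x'\rangle = k^{-1}\sum_{i\in S}\epsilon_i\epsilon_i'$ with $S=\mathrm{supp}(x)\cap\mathrm{supp}(x')$, $|S|\sim\mathrm{Binomial}(n,(k/n)^2)$, and independent Rademacher signs $\epsilon_i,\epsilon_i'$, the moment expands as a sum over set-partitions $\pi$ of $\{1,\ldots,2d\}$ into even blocks, the contribution of a partition with $m$ blocks of sizes $2\mu_1,\ldots,2\mu_m$ being (up to combinatorics) $n^{\downarrow m}\cdot n^{-2m}\cdot k^{-2(d-m)}$. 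The key observation is that the assumption $D=o(k^2/n)$ (taken together with the nontriviality requirement $D\to\infty$) forces the regime $k\gg\sqrt{n}$, so $n/k^2 = o(1)$, and each non-perfect-matching partition is suppressed by at least one such factor relative to the perfect-matching term $(2d-1)!!\,n^{-d}$. Combining this with a careful accounting of how many ways excess coincidences can appear, and using $d\le D/2 = o(k^2/n)$ to keep the combinatorial multipliers under control, yields $\EE_{x,x'}[\langle x,x'\rangle^{2d}]\le (1+o(1))^d\,(2d-1)!!\,n^{-d}$.

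Plugging this in and using $\binom{N/2+d-1}{d}\le (N/2)^d/d!\cdot(1+O(d/N))^d$ together with the identity $(2d-1)!!/(2^d d!) = \binom{2d}{d}/4^d = \Theta(1/\sqrt{d})$, the $d$-th summand is bounded by $(1+o(1))^d\,\lambda^{2d}/\sqrt{d}$, where $\lambda = \beta/\sqrt{\gamma}$ and $\gamma = n/N$. Under the hypothesis $\lambda<1$, this is a convergent geometric-type series, so $\Adv_{\le D}^2 = O(1)$, as desired. The main technical obstacle is the moment bound in the third paragraph: exploiting $\lambda<1$ sharply (rather than merely some $\lambda < c < 1$) requires a $(1+o(1))^d$ prefactor on $\EE[\langle x,x'\rangle^{2d}]$, not merely $C^d$. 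Perfect matchings contribute the exact leading factor $(2d-1)!!\,n^{-d}$, but showing that the sum of non-matching partitions is a $1+o(1)$ correction uses both the per-coincidence suppression $n/k^2=o(1)$ and careful combinatorial bookkeeping; this is precisely where the hypothesis $D=o(k^2/n)$ enters in a tight way.
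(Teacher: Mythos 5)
Your proposal is correct, and the overall strategy---pass through $\Adv_{\le D}$ via Proposition~\ref{prop:adv-c}, compute it exactly using Hermite orthogonality and Wick's formula, then control the overlap moments $\EE[\langle x,x'\rangle^{2d}]$ using the sparse prior---is essentially the one used in the cited source. The closed form
\[ \Adv_{\le D}^2 = \sum_{d=0}^{\lfloor D/2\rfloor}\binom{N/2+d-1}{d}\beta^{2d}\,\EE_{x,x'}\bigl[\langle x,x'\rangle^{2d}\bigr] \]
is an exact identity: the factor $\prod_\ell (2d_\ell-1)!!/(2^{d_\ell}d_\ell!)$ that arises from squaring the Wick expression and summing column-wise has $\sum_{\sum d_\ell = d}\prod_\ell(\cdot) = [z^d](1-z)^{-N/2}$ by the binomial series, so the Hermite-degree-$\le D$ projection does coincide with the Taylor truncation of $(1-\beta^2\rho^2)^{-N/2}$ in $\rho^2$ (this is not automatic for general models, but you correctly flag it as known for the spiked Wishart and your cross-check via Sherman--Morrison confirms it). You correctly identify the moment bound as the crux. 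To make it fully rigorous: using $\EE[\langle x,x'\rangle^{2d}] = k^{-2d}\sum_{m\le d}B^{\mathrm{even}}_{2d,m}\,n^{\downarrow m}(k/n)^{2m}$, the bound $B^{\mathrm{even}}_{2d,m}\le(2d-1)!!\,S(d,m)$ (refine each even partition to a perfect matching and record the grouping of pairs) together with the Touchard-polynomial estimate $\sum_m S(d,m)t^m\le(d+t)^d$ yields $\EE[\langle x,x'\rangle^{2d}]\le(2d-1)!!\,n^{-d}\bigl(1+dn/k^2\bigr)^d$, which is $(1+o(1))^d$ uniformly over $d\le D/2$ exactly because $D=o(k^2/n)$. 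Two small remarks: (i) for the theorem as stated, $\lambda$ is a fixed constant $<1$, so a cruder $C^d$ prefactor with any $C<1/\lambda^2$ would already suffice; the $(1+o(1))^d$ form you insist on is only strictly needed if one wants to let $\lambda\uparrow 1$ slowly, and (ii) the similarly-needed $(1+2d/N)^d$ correction to $\binom{N/2+d-1}{d}\approx(N/2)^d/d!$ is also $(1+o(1))^d$ since $D=o(k^2/n)\le o(n)=o(N)$. The paper itself only cites~\cite{subexp-sparse,sparse-clustering}, so your write-up fills in the details of what those proofs do; the alternative ``simpler strategy'' alluded to in~\cite{sparse-clustering} replaces the direct Wishart computation with a reduction to an additive Gaussian surrogate (using channel comparison), which trades your exact combinatorial formula for the cleaner formula~\eqref{eq:gaussian-adv} at the cost of some looseness in the constant.
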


\begin{proof}
This follows from~\cite[Thm~2.14]{subexp-sparse}. It should also be possible to recover this using a simpler proof strategy of~\cite{sparse-clustering}. See also~\cite{sparse-adversarial} for a refinement of the result.
\end{proof}

Recalling Hypothesis~\ref{hyp:higher-D} --- namely that degree-$D$ polynomials are expected to be comparable in power to all $\exp(\tilde{\Theta}(D))$-time algorithms --- the bound on $D$ tracks the best known runtime as expected.

\begin{remark}[Sharp Thresholds vs Smooth Tradeoffs]
\label{rem:sharp-smooth}
The sparse PCA example simultaneously illustrates two different types of computational phase transitions. First, we have a sharp threshold where the difficulty of the problem changes abruptly at $\lambda = 1$. Below this threshold, we have a smooth tradeoff where the runtime varies smoothly as a function of the parameter $k$.

Other notable phase transitions can be classified as one of these types. In the planted clique model, the computational ``threshold'' at $k \approx \sqrt{n}$ is actually a smooth tradeoff, where it is possible to make headway into the hard regime at the expense of increasing the runtime in a smooth fashion~\cite{alon-clique}. Another smooth tradeoff occurs in tensor PCA, and the degree complexity~\cite{ld-notes} tracks the best known super-polynomial runtime~\cite{strongly-refuting-csp,sos-sphere-1,sos-sphere-2,kikuchi}. On the other hand, the stochastic block model has a sharp threshold known as the \emph{Kesten--Stigum (KS) bound}~\cite{decelle} (see also~\cite{AS-acyclic}), where the runtime abruptly jumps from polynomial to (conjecturally) fully exponential $\exp(n^{1-o(1)})$~\cite[Thm~2.20]{spectral-planting}. The KS bound was the first sharp threshold that was shown to be captured by a low-degree polynomial phase transition~\cite[Thm~1.9]{HS-bayesian}, with some aspects later improved and extended~\cite{hopkins-thesis,spectral-planting,kunisky-coordinate-2,conj-sharp-sbm,sharp-est}.
\end{remark}

We now conclude Part~I, having seen the basics of how low-degree polynomials can be used probe the computational complexity of statistical tasks. This framework allows us to form conjectures and rigorously substantiate them by establishing algorithmic barriers, at least for some class of approaches. Remarkably, degree complexity appears to reliably track time complexity for a wide variety of different problems, with a list of specific examples appearing in Section~\ref{sec:list}. There are also limits to the types of problems where this framework is appropriate, and this is the topic of Section~\ref{sec:conj}.

\newpage
\phantomsection
\begin{center}
\huge{\bf Part II: Deeper Discussion}
\end{center}
\addcontentsline{toc}{section}{Part II: Deeper Discussion}

While Part I introduced the basic framework, Part II is intended as a deeper discussion of the surrounding philosophy, as well as extensions of the basic framework and connections to other frameworks. The Sections~\ref{sec:commentary}--\ref{sec:open} that comprise Part II are somewhat modular and can be read in any order. See Section~\ref{sec:org} for an overview of what each one contains.

\section{Commentary on the Definitions}
\label{sec:commentary}

This section is devoted to addressing some frequently asked questions in regards to Part~I.

\subsection{What model of computation is used when referring to runtime?}

Above we have referred to ``polynomial-time'' algorithms, meaning the runtime is $\poly(n)$ for our notion of problem size $n$. The number of input variables will always be $n^{\Theta(1)}$, so this is equivalent to requiring the runtime to be polynomial in the input size. We allow randomized algorithms, in which case the expected runtime should be $\poly(n)$.

To make all of this formal, one needs to specify a model of computation. For a discrete problem like planted clique, a Turing machine would be appropriate, with the input encoded as a binary string of length $\binom{n}{2}$. For, say, sparse PCA, the inputs are real numbers, which would need to be rounded off to some precision. It is somewhat commonplace to ignore these issues of numerical precision and assume that basic arithmetic operations can be done exactly on real-valued inputs. The \emph{real RAM model} of Blum--Shub--Smale provides one way to formalize this~\cite{BSS}. Ultimately, I believe the conclusions discussed in this survey are not particularly sensitive to the choice of computation model (within reason) and there should be no inherent issues with approximating the algorithms we'll discuss in floating-point arithmetic.

Maybe one consideration to have in mind is that the computation of eigenvalues and eigenvectors of a matrix cannot be done exactly (even in the real RAM model) and must be approximated iteratively. While this is normally thought of as efficiently computable, there is no guarantee it can be done in polynomial time if the eigenvalues are extremely close together. The typical situation we will encounter is one with a ``spectral gap'' between the two largest (in magnitude) eigenvalues and we aim to compute the largest one (and its associated eigenvector), which can be done by power iteration as explained in Remark~\ref{rem:log-deg}.

In any case, the model of computation will only be relevant for our conjectural discussions about time complexity. We have formally defined a low-degree polynomial ``model of computation'' and this will be our primary focus.

\subsection{Can low-degree polynomials be computed efficiently?}

Not necessarily, for a few reasons. First is the issue of computing the coefficients. When we speak of a degree-$D$ polynomial for some $D = D_n$, we really mean a sequence of polynomials $f = f_n$, one for each problem size. The coefficients are allowed to have arbitrary dependence on $n$ (and other model parameters), so in principle they may be hard to compute or even uncomputable. Polynomials should be thought of as a \emph{non-uniform} model of computation as is standard in circuit complexity (and the related complexity class P/poly). In other words, we allow a different algorithm for each $n$ so that the polynomial coefficients (as well as the value at which to threshold the polynomial) can be hard-coded into the algorithm.

The second issue is that we tend to work with degree-$O(\log n)$ polynomials, which have a super-polynomial number of terms, and thus cannot necessarily be computed in poly time even if the coefficients are ``easy.'' While some natural degree-$O(\log n)$ polynomials can be approximately evaluated in poly time (see Section~\ref{sec:alg-captured}), there is no such guarantee in general.

Ultimately, our focus here is on lower bounds, where the relevant hypothesis is that degree-$O(\log n)$ polynomials are \emph{at least} as powerful as poly-time algorithms. Our main concern will not be ``can polynomials be computed efficiently?''\ but rather ``can important classes of efficient algorithms be represented as polynomials?''

\subsection{Why ``separation'' as the notion of success for low-degree testing?}
\label{sec:comm-sep}

In my experience, the notion of separation tends to attract some criticism, and I will take this opportunity to expand on the justification for posing the framework in this way.

Backing up a bit, what do we desire from a definition of ``success'' for low-degree tests\footnote{Not to be confused with the ``low-degree test'' in property testing~\cite{ld-test}.} in the first place? The hope is to establish a unified notion of success that can be used for both upper and lower bounds across various high-dim stat problems. There are a few criteria we'd like this to meet: (i) for the upper bounds to be meaningful, ``success'' should, at a minimum, imply that detection is information-theoretically possible; (ii) for the lower bounds to be meaningful, we need to believe that whenever there is an efficient algorithm (perhaps within some class) for detection, there should also be a low-degree polynomial that achieves ``success''; (iii) we need to actually have the tools to prove upper and lower bounds for this notion of success. Based on these criteria, I will argue that separation is the most satisfying definition of success that we have at the moment.

One natural attempt at defining success would be to require that detection (weak or strong) can be achieved by thresholding the value of some polynomial. In other words, the class of tests is restricted to \emph{polynomial threshold functions (PTFs)}. Unfortunately this currently fails criterion~(iii), that is, we have not managed to prove lower bounds against PTFs for the types of problems discussed in this survey. Our current low-degree lower bounds are based on computing $\Adv_{\le D}$ (or some variant thereof) as defined in~\eqref{eq:adv}, which rules out separation but does not rule out thresholding. In other words, they rule out the \emph{standard analysis} of PTFs via Chebyshev's inequality, but do not formally rule out PTFs. It is an important open question to rule out PTF tests.

In the earlier days of the low-degree framework, I feel it was common to treat $\Adv_{\le D}$ itself as the notion of success, where $\Adv_{\le D} = O(1)$ means the problem is predicted to be hard while $\Adv_{\le D} = \omega(1)$ suggests that it should be easy. However, $\Adv_{\le D} = \omega(1)$ does not imply that detection can be achieved by thresholding a polynomial (since there is no control over the variance under $\PP$), and need not even imply that detection is information-theoretically possible, so this fails criterion~(i). More recently we have discovered some natural testing problems where $\Adv_{\le D} = \omega(1)$ even in part of the ``hard'' regime~\cite{fp,grp-testing,subhypergraph,correlated-er}. Despite this, the threshold for \emph{separation} does match the true (conjectured) computational threshold in these examples, and this began the trend of taking separation as the explicit definition of success in low-degree upper and lower bounds. In some cases, lower bounds against separation require calculating a ``conditional'' $\Adv_{\le D}$ with a modified $\PP$ (see Section~\ref{sec:cond}) but this is only a proof device: the definition of separation still pertains to the \emph{original} $\PP$ and $\QQ$.

Separation meets criteria~(i) and~(iii), but what about~(ii)? As with any notion of success, this is something we can only hope to amass evidence for by studying many examples: to make our lower bounds against separation meaningful, we'd like to see that for many problems, in the ``easy'' regime where poly-time algorithms are known, there is some low-degree polynomial that succeeds \emph{specifically in the sense of separation}. There are by now many low-degree upper bounds of this form, and some are listed in Section~\ref{sec:list}.

I am not aware of any ``natural'' problems where separation behaves differently from PTFs. One can certainly imagine designing a badly-behaved polynomial that succeeds by thresholding but not separation in the easy regime, but there tends to be a \emph{different} polynomial of comparable degree that does achieve separation. One reason to expect thresholding and separation are connected is that natural distributions tend to be \emph{hypercontractive} --- which roughly means the moments of low-degree polynomials are well-behaved --- and hypercontractivity along with bounded $\Adv_{\le D}$ does rule out thresholding subject to some technical conditions\footnote{The most serious technical condition here is that we only rule out thresholding with a specific high success probability, not any $1-o(1)$.}~\cite[Thm~4.3]{ld-notes}.

As a final note about separation, I find that detection lower bounds against separation attract more criticism than recovery lower bounds involving $\MMSE_{\le D}$. However, I consider the two notions to be highly analogous. Strong separation between $\PP,\QQ$ is equivalent to having mean squared error $o(1)$ in the following estimation setting: the estimand $x$ is drawn uniformly from $\{0,1\}$ and then we receive a sample $Y$ drawn from either $\QQ$ or $\PP$ if $x$ is $0$ or $1$ respectively. Relatedly, $\MMSE_{\le D}$ suffers from the same flaw as separation: if we show $\MMSE_{\le D}$ is large, there could still conceivably be a degree-$D$ polynomial that one can threshold to achieve exact recovery with high probability. You might view this as either a point in favor of separation or a point against $\MMSE_{\le D}$.

\subsection{What's the point of low-degree \emph{upper} bounds?}
\label{sec:comm-upper}

If we want to argue that a problem is \emph{easy} in some regime, we should just find a poly-time algorithm and prove that it works, without any need for low-degree polynomials. However, ideally we should also establish a \emph{low-degree} upper bound in the easy regime, meaning we prove that some (say) degree-$O(\log n)$ polynomial succeeds, \emph{in the precise sense that we've defined success} (separation or $\MMSE_{\le D}$). This serves a different purpose from the poly-time algorithm, and the two need not be related. The point of low-degree upper bounds is to serve as a ``sanity check'' for the lower bounds: without them, we might worry that low-degree polynomials fail in both the hard and easy regimes. Only with matching lower and upper bounds, both using the same notion of success, can we rigorously establish that a phase transition has occurred. Put another way, the upper bounds exhibit that degree-$O(\log n)$ polynomials are powerful, making it more meaningful when we show their failure in the opposite regime. The upper bounds are especially important when considering a new style of problem or a new notion of success\footnote{One notion of success that has been used to state lower bounds but currently lacks the matching upper bounds is the minimax framework as in~\cite[Thm~1]{graphon}. It is an interesting question to investigate whether or not low-degree polynomials can achieve this notion of success in the easy regime.} that qualitatively deviates from existing results.

Currently we are (embarrassingly) missing some low-degree upper bounds even in the very basic settings from Section~\ref{sec:hope}. Not all of these are necessarily difficult, but they have never been written down, as unfortunately they can be somewhat tedious to prove. For planted clique, it would be nice to see that degree-$O(\log^2 n)$ polynomials succeed (in terms of both strong separation and $\MMSE_{\le D}$) in the hard regime and that degree-$O(\log n)$ polynomials succeed when $k = \epsilon \sqrt{n}$. For sparse PCA, it would be nice to see that degree-$O(\log n)$ polynomials succeed when $\lambda > 1$ (this one should be similar to~\cite[Section~2.6]{HS-bayesian}) and that degree-$\tilde{O}(k^2/n)$ polynomials succeed in general for $k \gtrsim \sqrt{n}$. Every low-degree upper bound adds to the credibility of the low-degree framework as a whole, and I hope to see more of them appear in the future. In Section~\ref{sec:alg-captured} we will discuss some popular types of algorithms that can typically be made into low-degree upper bounds, but usually some problem-specific work is needed for the analysis.

\section{Tasks: Detection, Recovery, and More}
\label{sec:tasks}

For models like those described in Section~\ref{sec:high-dim} there are a variety of different objectives we might be interested in. In Part I we have already mentioned the detection and recovery tasks. Here we will define two additional tasks, refutation and optimization, and explore the relations among all four tasks. In general, all these tasks may have different computational thresholds. A major strength of the low-degree framework is its ability to probe all these different tasks, and differentiate between the corresponding thresholds.

\subsection{Definitions}
\label{sec:task-def}

We will use the planted clique model as a running example to illustrate some different statistical tasks, but the analogous tasks can be defined for other models too. Recall from Section~\ref{sec:mot-ex} that the planted clique distribution (with parameters $n,k$) is generated by sampling a random graph $G(n,1/2)$ and then planting a clique on $k$ randomly selected vertices. We will call this distribution $\PP = \PP_n$ for ``planted,'' where $k$ will scale with $n$ in some prescribed way. We will also refer to the ``null'' distribution $\QQ = \QQ_n = G(n,1/2)$, which represents pure ``noise'' without planted structure.

\begin{itemize}
    \item {\bf Detection (a.k.a.\ Hypothesis Testing)}: This is the task of distinguishing $\PP$ from $\QQ$, given a sample from either (in this case, a random graph drawn either from $\PP$ or $\QQ$). The strong and weak versions of detection were defined in Definition~\ref{def:det}. In general, our null distribution will usually be a product measure (i.e., independent entries, which is the case for $G(n,1/2)$) and we'll see in Section~\ref{sec:techniques} that this is helpful for making the analysis easier. An exception is ``planted-vs-planted testing''~\cite{planted-v-planted} discussed in Section~\ref{sec:planted-v-planted-pf}, e.g., the task of distinguishing the case of 1 planted clique from 2 planted cliques~\cite{coloring-clique}.
    \item {\bf Recovery (a.k.a.\ Estimation)}: Given a graph drawn from $\PP$, this is the task of recovering the $k$ vertices on which the clique was planted. \emph{Exact recovery} refers to exactly identifying the set of $k$ vertices with high probability, i.e., probability $1-o(1)$. Weaker notions of approximate recovery also exist, which may be problem-specific. ``Estimation'' usually refers to approximate recovery of some hidden vector (e.g., the indicator vector for the clique vertices) with respect to some error metric (e.g., mean squared error).
    \item {\bf (Non-Planted) Optimization}: Given a graph drawn from $\QQ$, this is the task of finding as large a clique as possible. Success means finding a $k$-clique with high probability, for some given parameter $k$. Note that the input has no planted structure, so there may be many $k$-cliques in the graph and we are satisfied with finding any one of them. This is in contrast to recovery, where our goal is to find \emph{the} planted clique.
    \item {\bf Refutation (or Certification)}: This is a more subtle task that can be skipped on a first reading. Given a graph drawn from $\QQ$, refutation is the task of (algorithmically) \emph{proving} that the planted structure (in this case, a $k$-clique) does not exist. Formally, a refuter takes a graph $Y$ as input and outputs NO or MAYBE with the following two properties: (1) for \emph{any} input $Y$ that contains a $k$-clique, the output must be MAYBE; (2) with high probability over $Y \sim \QQ$, the output must be NO. Here, the response NO means that the existence of a $k$-clique has been ruled out with absolute certainty for this particular input $Y$. We can only hope for a refuter when $k$ is large enough so that $Y \sim \QQ$ has no $k$-clique with high probability. The more general term ``certification'' refers to proving that the input has some property of interest.
\end{itemize}

\noindent For all these tasks, we want computationally efficient algorithms that work for the widest possible range of $k$ values. The first three become more difficult as $k$ decreases but optimization becomes more difficult as $k$ increases. These four types of tasks (and variations thereof) can be defined for many other high-dim stat problems.\footnote{Optimization and refutation are perhaps not really ``statistical'' problems, but terms like ``high-dim stats'' or ``statistical-computational gap'' are still sometimes used here. Alternative terms like ``high-dimensional average-case problems'' or ``optimization-computation gap'' have also been used, and may be more appropriate.} We next briefly motivate each of the tasks, speaking now in high generality (beyond the clique problem). Recovery and optimization are perhaps the most natural, and can be motivated by real-world applications: determining which genes cause a disease would be a (planted) recovery problem because there is a ``ground truth,'' whereas finding a good airline schedule would be a (non-planted) optimization problem. Detection can also be intrinsically motivated, and is also often used as a device for reasoning about recovery, since detecting the presence of a planted signal appears to be a necessary precursor to actually finding it. Certification is motivated by a desire to be absolutely certain that some property holds on the given input. The certification task can also be a useful device for indirectly reasoning about limitations of convex programs --- which are a natural approach to certification --- as in~\cite{sk-cert}.

In the case of planted clique, where are the computational thresholds believed to lie for the various tasks? In Part I we saw that the best known poly-time algorithms for detection and recovery require $k \gtrsim \sqrt{n}$. One way to solve detection and recovery when $k \gtrsim \sqrt{n}$ is to use the leading eigenvalue or eigenvector (respectively) of the signed adjacency matrix $A$, where $A_{ij} = 1$ whenever edge $(i,j)$ is present, $A_{ij} = -1$ whenever the edge is absent, and $A_{ii} = 0$.

The best known algorithms for refutation also require $k \gtrsim \sqrt{n}$, and this can again be achieved using the maximum eigenvalue of $A$. Any graph with a $k$-clique must have maximum eigenvalue at least $k-1$, since this is the value of $u^\top A u / \|u\|^2$ where $u$ is the indicator vector for the clique. Therefore, if the maximum eigenvalue is below $k-1$, this refutes the existence of a $k$-clique. For $G(n,1/2)$, the maximum eigenvalue of $A$ is $O(\sqrt{n})$ with high probability, meaning the above strategy for refutation works when $k \gtrsim \sqrt{n}$.

Optimization, on the other hand, is interesting for much smaller $k$ values. The maximum clique in $G(n,1/2)$ has size $(2 \pm o(1)) \log_2 n$ with high probability~\cite[Section~4.5]{prob-method-book}. The influential work of Karp in the 70's~\cite{karp} showed that a simple greedy algorithm can find a clique of ``half-optimal'' size $(1-\epsilon) \log_2 n$ with high probability, and famously asked whether any poly-time algorithm can improve this threshold. To date, no better algorithm has been found, and low-degree lower bounds suggest the gap is fundamental: low-degree polynomials fail to surpass $(1+\epsilon) \log_2 n$, appropriately defined~\cite{indep-set,strong-ld}. Unfortunately the matching low-degree \emph{upper bound} is still missing for this particular problem, although a variation for sparse graphs does have a matching upper bound~\cite{indep-set}.

\subsection{Relations Among the Tasks}
\label{sec:relations-tasks}

What formal relations exist between the four tasks above? Optimization and refutation are ``dual'' to each other in the sense that optimization seeks to (constructively) prove a lower bound on the maximum clique size of a graph drawn from $\QQ$, while refutation seeks to prove an upper bound on the same quantity. However, these two tasks need not have similar difficulty, as seen for example in~\cite{sk-cert,affine-planes,sos-amp-robustly}. In fact, optimization does not appear to have any formal relation to the other three tasks, as it is only interesting when $k$ lies below the maximum clique size in $G(n,1/2)$ --- roughly $2\log_2 n$ --- while the other tasks are only interesting when $k$ lies above this threshold.

Detection, recovery, and refutation are related as shown in Figure~\ref{fig:relations}, namely recovery and refutation are each at least as hard as detection but incomparable to each other. This is justified by the following reductions which we illustrate using the clique example, but the arguments are simple and quite generic.

\begin{figure}
    \centering
    \begin{tikzcd}
    &
        \begin{tabular}{c}
             Strong Detection  \\
              ($\mathbb{P}$ vs $\mathbb{Q}$)
        \end{tabular}
    & \\
    \begin{tabular}{c}
             Exact Recovery  \\
              (in $\mathbb{P}$)
    \end{tabular}
        \arrow{ur}{}
        & &
    \begin{tabular}{c}
             Refutation  \\
              (in $\mathbb{Q}$)
        \end{tabular}
        \arrow{ul}{}
    \end{tikzcd}
    \captionsetup{width=.95\linewidth}
    \caption{Formal relations among detection, recovery, and refutation. An arrow $A \to B$ means if we have an algorithm to solve $A$ for some $k$ value, then we get an algorithm for $B$ with the same $k$ value and comparable runtime.}
    \label{fig:relations}
\end{figure}
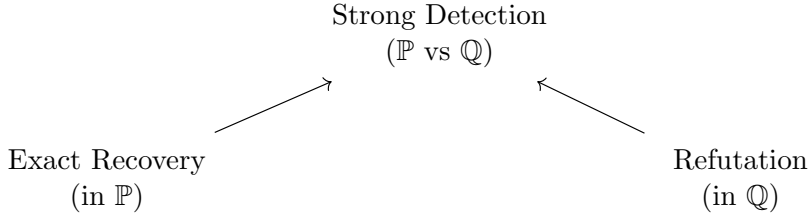

\begin{itemize}
    \item {\bf Recovery $\implies$ Detection}: Given an algorithm for exact recovery, we will give an algorithm for strong detection that works for the same value of $k$ and has comparable runtime. To construct our test, first try running the recovery algorithm on the given input $Y$. If it outputs $k$ vertices that actually form a $k$-clique in $Y$ then guess $\PP$, and otherwise guess $\QQ$. To prove correctness in the case $Y \sim \QQ$, it is important that a graph drawn from $\QQ$ has no $k$-clique (with high probability) so the recovery algorithm cannot possibly find one.
    
    We remark that for weaker notions of approximate recovery, it is less clear how to carry out this type of reduction (unless weak recovery can be automatically ``boosted'' to exact recovery, e.g.~\cite[Section~2.1]{ogp-sparse}). Some rather non-trivial reductions from detection to \emph{weak} recovery appear in~\cite{alg-contig,conj-sharp-sbm}. Also, connections between the \emph{information-theoretic} thresholds for detection and recovery appear in e.g.~\cite{BMVVX,rec-test-sbm}.
    
    \item {\bf Refutation $\implies$ Detection}: Given an algorithm for refutation, we will give an algorithm for strong detection that works for the same value of $k$ and has comparable runtime. To construct our test, first try running the refutation algorithm on the given input $Y$. If it outputs NO then guess $Y \sim \QQ$, and otherwise guess $Y \sim \PP$. To prove correctness in the case $Y \sim \PP$, it is important that a graph drawn from $\PP$ has a $k$-clique (with high probability) so the refuter is forced to output MAYBE.
\end{itemize}

\noindent The above arguments need only minimal assumptions on $\PP,\QQ$, namely that $\QQ$ does not contain the planted structure (with high probability) while $\PP$ does, and we need a way to verify that the recovery output is ``correct'' (e.g., actually forms a $k$-clique). Without these assumptions, there are simple but ``unnatural'' counterexamples to the above implications~\cite[Section~3.4]{BMVVX}.

In the planted clique setting, the three tasks (detection, recovery, refutation) are believed to be equally difficult,\footnote{In fact, planted clique has a recovery-to-detection reduction in the opposite direction~\cite{clique-s-to-d} (see also~\cite[Appendix~D]{clique-equiv}), although this is quite specific to the clique problem and not typical in other settings.} all with computational thresholds at $k \approx \sqrt{n}$~\cite{pcal,hopkins-thesis}, but this need not be the case in general. Some problems have a \emph{detection-recovery gap} where detection is strictly easier than recovery, maybe due to a ``trivial'' test such as counting the total edges in the graph; see e.g.~\cite{SW-estimation,det-rec-reduction}. A different choice of $\QQ$ might close this gap, but sometimes it seems difficult to close the gap completely if we want to keep $\QQ$ a ``simple'' distribution (see~\cite[Appendix~B.1]{SW-estimation}). Similarly, there are \emph{detection-refutation} gaps, e.g.~\cite{local-stats,det-rec-reduction}.

Recovery and refutation are incomparable because one pertains to $\PP$ while the other pertains to $\QQ$, so we cannot hope to formally relate these in general, at least without further assumptions about our choice of which $\PP$ and $\QQ$ to pair together. In some settings, a refutation algorithm implies a recovery algorithm, provided the refutation can be implemented in the sum-of-squares proof system (e.g.~\cite[Lemma~4.4]{tensor-pca-sos} for the case of tensor PCA). The planted $k$-coloring problem in $d$-regular graphs gives one example where the computational thresholds for recovery and refutation are different: for $k$ a large constant, strong detection and weak (non-trivial) recovery become easy once $d > k^2$ (the Kesten--Stigum threshold), while refutation becomes easy only once $d > 4k^2$ (see~\cite{spectral-planting}, especially Remark~2.7).

\subsection{Variations on the Low-Degree Framework}
\label{sec:variations-ld}

A notable strength of the low-degree framework is that it can be adapted to probe the computational complexity of different types of tasks like those discussed above. This is usually done in one of two ways. The first way is to directly phrase the question in terms of low-degree polynomials, defining a notion of ``success'' as we have done for detection and recovery in Section~\ref{sec:def-success}.

A second way to argue about hardness of some task is to relate it to some associated testing problem and then show low-degree hardness for the testing problem. I will refer to this as a ``two-stage'' argument, as it requires two components: (1) define a testing problem ($\PP$ versus $\QQ$), conjecture that it's hard (in terms of runtime), and justify the conjecture by proving a low-degree lower bound; (2) give a polynomial-time reduction showing that any hypothetical algorithm for the original task of interest would also yield an algorithm for the testing problem. As a result, we conclude hardness (in terms of runtime) for the task of interest, conditional on some unproven (albeit rigorously ``justified'') conjecture. While this makes for a slightly messy theorem statement, one major advantage of this approach is that testing lower bounds tend to be the easiest form of low-degree lower bounds to prove, particularly when $\QQ$ is a product measure.

We now discuss some tasks --- beyond detection and recovery --- where low-degree polynomials can be used to reason about computational complexity. Further details on the proof techniques are deferred to Section~\ref{sec:techniques}.

\begin{itemize}

\item {\bf Refutation}: We have already established in Section~\ref{sec:relations-tasks} that refutation is at least as hard as the associated detection problem, allowing us to argue for hardness of refutation by a two-stage argument as described above. In fact, there are many potential associated detection problems for a given refutation problem because we are free to choose any $\PP$ that contains the structure we're trying to refute. To prove the strongest hardness result, we need to choose $\PP$ that most effectively ``hides'' the planted structure. This may require a non-trivial construction for $\PP$, called a \emph{(computationally) quiet planting}.\footnote{This terminology comes from~\cite{sk-cert}, loosely inspired by~\cite{quiet}, but the idea existed earlier, e.g.\ for random $k$-SAT formulas~\cite{hiding-statmech}. See also~\cite{optimal-distinguishers-pc} for a sophisticated recent example involving planted clique.} Examples of two-stage arguments for refutation (or certification) include~\cite{rip-cert-clique,sk-cert,rip-cert,spectral-planting,non-neg-pca,cert-lift-mono}, and in some cases a different framework (not low-degree polynomials) is used to justify the intermediate conjecture.

We have seen above that hardness of refutation can be argued via connection to some related testing problem. However, this gives a result conditional on some conjecture, and it may be preferable to have an unconditional result. To this end, one can directly consider low-degree polynomials as refutation algorithms and define a notion of ``success'' analogous to the notion of separation (Definition~\ref{def:sep}). This allows for unconditional lower bounds against low-degree polynomials. We refer the reader to~\cite[Section~2.2]{coloring-clique} for the details. The methods above are formally incomparable to the more traditional refutation lower bounds based on convex programming hierarchies (see Section~\ref{sec:sos}).

\item {\bf Optimization}: Low-degree polynomial algorithms have also been considered for random optimization problems with no planted signal~\cite{ld-opt,indep-set,opt-ksat,barriers-perceptron,opt-balanced,opt-hyper,some-easy-ogp,strong-ld,sellke-spherical,strong-npp}. Here the precise definition of ``success'' is a bit more complicated and problem-specific, and we refer to those works for the details. The recent work~\cite{strong-ld} has improved many existing works by ruling out a weaker notion of success.

\item {\bf Fine-grained error}: Returning to detection and recovery, there are some finer-grained questions that we might hope to ask. For detection, we might care about the precise tradeoff between type I and II testing errors (i.e., false positives vs false negatives) in a regime where both converge to non-trivial constants. This is considered by~\cite{precise-error} using a two-stage argument, but we don't currently have unconditional results of this type for low-degree testing question. See also~\cite{optimal-distinguishers-pc} for some results of a similar flavor.

Similarly, for recovery we might be interested in pinning down the precise optimal mean-squared error in situations where this converges to some non-trivial constant. A few lower bounds of this type against \emph{constant-degree} polynomials are known~\cite{MW-amp,semerjian}, but extending this to higher degree remains open.

\item {\bf Query models}: We might also seek to ask more fine-grained questions about runtime, perhaps differentiating $O(n)$ from $O(n^2)$, and so on. This appears challenging because there does not seem to be a degree-runtime correspondence at this level of granularity. Some partial progress on this question can be made by considering non-adaptive query models as in~\cite{sublinear-clique,kiril-fourier}. These models can also be intrinsically motivated, as they capture situations where the statistician must choose upfront which data to collect. For instance, in the planted clique setting, this would mean deciding upfront which edges to observe (``query'') subject to a budget on the number of queries~\cite{sublinear-clique}. It remains largely open how to capture \emph{adaptive} queries within the low-degree framework, although some limited forms of adaptivity are handled by~\cite{quantum-ld} in the context of quantum learning.

\item {\bf Other examples}: Two-stage arguments can potentially be useful for tasks beyond the ones we have defined above. One example is the \emph{sampling} problem considered in~\cite{kunisky-sampling}, and some other examples are mentioned at the end of Section~\ref{sec:list}.

\end{itemize}

\section{Interpretation of Low-Degree Lower Bounds}
\label{sec:interp}

Why do we care to prove low-degree lower bounds, and how exactly should we interpret them? A conservative view is to think of them as simply ruling out some restricted class of algorithms, and as we'll see, this class captures various popular approaches. However, to fully justify why this framework is interesting, it would be remiss not to discuss its heuristic connection to time complexity, which was mentioned already in Section~\ref{sec:d-r-corr}. This brings us to the more ambitious view on low-degree lower bounds: they are a tool for making principled predictions and conjectures about time complexity in a world where we do not hope to resolve these questions definitively. In Section~\ref{sec:hope} we have already seen a few examples of settings where degree complexity appears to reliably track time complexity, and we will see in Section~\ref{sec:list} that many more such examples exist. On the other hand, we need to be careful about which settings the framework is appropriate for, especially as we start to deviate from the type of ``natural high-dim stat problems'' illustrated in Section~\ref{sec:high-dim}.

This section aims to explore the above issues in greater detail. Section~\ref{sec:eval-poly} discusses the complexity of evaluating polynomials, as one means to provide some justification for the heuristic degree-runtime correspondence from Section~\ref{sec:d-r-corr}. In Section~\ref{sec:alg-captured} we will see that many state-of-the-art algorithmic techniques can be captured by low-degree polynomials, which explains why degree complexity appears to track time complexity so reliably. In Section~\ref{sec:list} we will give a long list of ``success stories,'' that is, problems for which the degree-runtime correspondence appears to hold. Finally, Section~\ref{sec:conj} is devoted to the ``low-degree conjecture,'' that is, the ongoing effort to rigorously define the class of models where we expect the degree-runtime correspondence to hold. This includes a discussion of known ``counterexamples'' where the correspondence fails.

\subsection{Complexity of Evaluating Polynomials}
\label{sec:eval-poly}

A polynomial in $N$ variables of degree $D$ can have up to $\binom{N+D}{D}$ terms. This comes from counting the number of tuples $(k_1,\ldots,k_N) \in \ZZ_{\ge 0}^N$ with $k_1 + \cdots + k_N \le D$, using the ``stars and bars'' method. We will typically be interested in the case $D \le N^{1-\Omega(1)}$ and will typically have $N = n^{\Theta(1)}$ for our notion of problem size $n$, so the number of terms scales as $\binom{N+D}{D} = n^{\Theta(D)}$. Therefore, a naive term-by-term evaluation of the polynomial has runtime $n^{\Theta(D)}$, assuming the coefficients are easy to compute. (Here we treat addition and multiplication as atomic operations without worrying about bit complexity or numerical precision.) This justifies one direction of the degree-runtime correspondence in Hypotheses~\ref{hyp:poly-time} and~\ref{hyp:higher-D}, namely that time-$n^{O(D)}$ algorithms are at least as powerful as degree-$D$ polynomials, for any choice of $D = D_n$.

The other direction is more difficult to justify, and the following subsections will address this. One point we can make now is that the vast majority of degree-$D$ polynomials cannot be evaluated much faster than the naive term-by-term method. More formally, most degree-$D$ polynomials in $N$ variables require arithmetic circuits of size $\Omega(\sqrt{\binom{N+D}{D}})$; see~\cite[Ch~2]{circuit-lb-1} or~\cite[Ch~4]{circuit-lb-2}. So unless our polynomials of interest possess some kind of special structure, we should not hope to evaluate them faster than the naive way. On the other hand, we will see in Section~\ref{sec:alg-captured} below that some degree-$O(\log n)$ polynomials that naturally arise in high-dim stat problems actually do have special structure that allows them to be approximately evaluated in poly time rather than the naive runtime $n^{O(\log n)}$.

\subsection{Algorithms Captured by Polynomials}
\label{sec:alg-captured}

Recall that Hypothesis~\ref{hyp:poly-time} made the following informal assertion: if all degree-$O(\log n)$ polynomials fail to solve some statistical task, then so do all polynomial-time algorithms. This may feel like a bold claim, but remarkably it appears to hold up quite well for ``natural'' high-dim stat problems in the style of Section~\ref{sec:high-dim} (and we'll give a specific list of such problems in Section~\ref{sec:list}). One explanation for this is that a variety of powerful algorithmic techniques are captured by degree-$O(\log n)$ polynomials, and together these techniques tend to subsume the best known algorithms. This section is devoted to cataloging some examples of such algorithms and explaining why we consider logarithmic degree specifically. Most of the ideas here are guidelines rather than theorems, as it often requires some problem-specific work to transform these algorithms into rigorous low-degree upper bounds (in the sense of Section~\ref{sec:def-success}).

\begin{itemize}
    \item {\bf Algorithms From Polynomials}: The first example is an ``obvious'' one, but it is worth mentioning that various algorithmic results have been directly obtained from low-degree polynomials. For instance, this includes performing inference on random graphs by counting the number of occurrences of certain subgraphs, as in (for example)~\cite{log-density,MNS-proof,HS-bayesian,graph-match-nearly,graph-match-trees,graph-match-otter,decorated-trees}. In some of these cases~\cite{HS-bayesian,graph-match-trees,graph-match-otter,decorated-trees}, degree-$\Theta(\log n)$ polynomials are used, and the ``color coding'' trick~\cite{color-coding} is employed to approximately evaluate these in polynomial time.
        
    \item {\bf Spectral Methods}: This refers to a general class of approaches where one constructs some matrix using the input and then uses the leading eigenvalue(s) or eigenvector(s) to perform inference, an early example being~\cite{alon-clique} for the planted clique problem. In general, one can imagine building a (usually symmetric) matrix $M = M(Y)$ where each entry $M_{ij}$ is a constant-degree polynomial in the input variables $Y$. This might be the ``obvious'' choice of matrix, such as the adjacency matrix if the input is a graph, but various state-of-the-art results require a non-trivial choice of matrix: the non-backtracking operator~\cite{spectral-redemption,nb-spectrum} or Bethe Hessian~\cite{bethe-hessian} for community detection, and other examples~\cite{fast-sos,kikuchi,spectral-phase-retrieval,bp-unstable}, just to name a few. Remark~\ref{rem:log-deg} below gives more details on how to implement spectral methods using degree-$O(\log n)$ polynomials, using power iteration and related ideas. We will see that the matrix must be ``well-conditioned'' for this to work, and in fact, an example of~\cite{conj-false} shows that low-degree polynomials cannot compute leading eigenvalues in general. For rigorous results that use low-degree lower bounds to prove failure of spectral methods (appropriately defined), see~\cite[Section~4.2.3]{ld-notes} and~\cite[Section~4.2]{planted-vec}.
    
    \item {\bf Approximate Message Passing (AMP)}: The AMP framework~\cite{amp,BM-amp} (see \cite{amp-tut-unif,amp-tut-concise} for a survey) has been widely successful at producing state-of-the-art iterative algorithms for a variety of high-dim stat problems. These algorithms often give the best known performance among efficient algorithms in a sharp sense, such as the precise limiting value of mean squared error. AMP algorithms have been used extensively for planted problems (e.g.~\cite{FR-amp,DM-sparse-pca,MMSE,submatrix-amp,glm-amp}), and more recently for non-planted optimization problems (e.g.~\cite{opt-sk,opt-spin-glass}). We generally expect that AMP-style algorithms can be effectively approximated by low-degree polynomials, and this approximation has been rigorously analyzed in some settings~\cite{MW-amp,sos-amp-robustly,fourier-iterative,alg-universality}. These results show that $O(1)$ iterations of AMP can be captured by degree-$O(1)$ polynomials, but some settings require $O(\log n)$ iterations or a spectral initialization~\cite{amp-spectral,amp-spectral-glm}. It is a good open question to prove that degree-$O(\log n)$ polynomials capture AMP in these settings, perhaps using $O(\log n)$ rounds of power iteration followed by $O(1)$ iterations of AMP.

    \item {\bf Local Algorithms on Sparse Graphs}: Consider a non-planted optimization problem where the input is a sparse random graph $G(n,p/n)$ for a constant $p > 0$. Roughly speaking, a \emph{local algorithm} (see~\cite{GS-ogp}) is one where the output at each vertex depends only on its constant-radius neighborhood in the graph. A simple example for constructing an independent set is to assign an independent $\cN(0,1)$ label to each vertex, and then include every vertex that has a larger label than all its neighbors. It is shown in~\cite[Remark~3.2]{indep-set} that any local algorithm can be well approximated by a degree-$O(1)$ polynomial.    
\end{itemize}

\begin{remark}[Why Logarithmic Degree?]
\label{rem:log-deg}
To illustrate where the choice of logarithmic degree arises, let's return to the important example of spectral methods from above and see how to implement these as polynomials. Suppose $M = M(Y)$ is an $n \times n$ symmetric matrix whose entries are degree-$O(1)$ polynomials in $Y$. A scenario to have in mind is the following: under $\QQ$, all eigenvalues of $M$ are (with high probability) confined to some compact interval, say $[-1,1]$; under $\PP$, most eigenvalues are again confined to $[-1,1]$ but there is a single ``signal'' eigenvalue at $1+\epsilon$ for a constant $\epsilon > 0$, and the associated leading eigenvector has non-trivial correlation with the hidden vector we aim to estimate. This situation arises in spiked Wigner or Wishart matrix models, for instance~\cite{BBP,BS-spiked,FP-wigner,CDF-wigner,BN-eigenvec}. The leading eigenvalue solves detection, and this can be approximated by the polynomial
\[ f(Y) := \Tr(M^{2k}) = \sum_{i=1}^n \lambda_i^{2k} \]
for an integer $k > 0$, where $\{\lambda_i\}$ are the eigenvalues of $M$. For large enough $k$, the largest term in the sum will dominate, so $f(Y) \approx \max_i \lambda_i^{2k}$. The following heuristic calculation shows that $k = O(\log n)$ suffices for successful detection: under $\QQ$ we have $f(Y) \le n \cdot 1^{2k} = n$ and under $\PP$ we have $f(Y) \ge (1+\epsilon)^{2k}$, so we need $(1+\epsilon)^{2k} > n$, i.e., $k > (\log n)/[2 \log(1+\epsilon)]$.

For recovery, we instead wish to estimate the leading eigenvector, which can be done by power iteration: $M^k u$ for some initial vector $u$, say the all-ones vector. Provided we have a $(1+\eps)$-factor ``spectral gap'' between the two largest (in magnitude) eigenvalues as above, a standard analysis (expand $u$ and $M^k u$ in the eigenbasis of $M$) shows that again $k = O(\frac{\log n}{\epsilon})$ suffices to reach a vector that is well-correlated with the leading eigenvector. If we aim to turn this into a rigorous low-degree upper bound, i.e., to show $\MMSE_{\le D}$ (Definition~\ref{def:mmse}) is small, this should be formulated in terms of matrix MSE: we aim to estimate $vv^\top$ where $v$ is the leading eigenvector, to avoid the ambiguity between $v$ and $-v$. For this we can use the estimator $M^k$ times a suitable scalar multiple. The $(i,j)$ entry of $M^k$ can be viewed as a sum over length-$k$ walks from vertex $i$ to vertex $j$ in the complete graph, and for ease of analysis, it may be advantageous to restrict to \emph{self-avoiding} walks. See~\cite[Section~2.6]{HS-bayesian}, where the details are carried out for the spiked Wigner model to yield a low-degree upper bound at degree $O(\log n)$ above the BBP threshold. Degree $\Omega(\log n)$ is necessary here~\cite[Thm~2.6]{sharp-est}.
\end{remark}

In light of the above, one perspective on low-degree lower bounds is that they can be used to rule out many concrete algorithm classes of interest, either heuristically or rigorously. A good direction for future work is to expand the class of algorithms that can be rigorously captured by low-degree polynomials. Currently, most low-degree upper bounds are proved on a problem-by-problem basis, but perhaps more general tools can be developed.

\subsection{Success Stories}
\label{sec:list}

Perhaps one of the main reasons to believe that low-degree polynomials are useful for predicting average-case complexity is simply the large amount of ``empirical'' evidence we have gathered by studying many different problems. The goal of this section is to list many models where the low-degree framework has been successful. To be considered a ``success story,'' a model should have a stat-comp gap and the degree complexity should track the (best known) time complexity as described in Section~\ref{sec:d-r-corr}, similar to the examples in Section~\ref{sec:hope}.

An ideal success story would have matching upper and lower bounds, showing that low-degree polynomials succeed in the ``easy'' regime where poly-time algorithms are known, but fail in the ``hard'' regime. However, the list will also include some settings where the results are less complete --- e.g., upper bounds are missing or lower bounds are suboptimal --- as long as the low-degree results appear consistent with the known time complexity. The list certainly will not contain examples where degree-$O(\log n)$ polynomials are provably ``beaten'' by some poly-time algorithm; these will be discussed in Section~\ref{sec:counter}.

We now begin the list, focusing for now on detection, recovery, and optimization. Many of the models listed here are extensively studied, and for brevity we only include here the citations that carry out the low-degree analysis.

\paragraph{List of low-degree ``success stories'':}

\begin{itemize}

\item {\bf Planted problems in random graphs}: Planted clique~\cite{hopkins-thesis,SW-estimation}, including ``secret leakage'' variants~\cite{secret-leakage}, ``masked'' variants~\cite{sublinear-clique}, and multi-clique variants~\cite{coloring-clique}; planted dense subgraph~\cite{SW-estimation,subhypergraph,sharp-est} and multi-community variants~\cite{planted-v-planted}; community detection in the stochastic block model~\cite{HS-bayesian,spectral-planting,graphon,kunisky-coordinate-2,sharp-est,sbm-many}, including ``mixed-membership''~\cite{HS-bayesian}, ``degree-correlated''~\cite{small-community}, and ``multi-layer''~\cite{multi-layer-sbm} variants; detecting correlated random graphs (graph matching)~\cite{graph-match-trees,correlated-er,weighted-graph-matching} or correlated block models~\cite{correlated-sbm}; planted dense cycles~\cite{dense-cycles}; detecting geometry~\cite{kiril-fourier} or algebraic structure~\cite{algebraic-graphs} in random graphs; planted ranked community in a directed graph~\cite{ranked}; planted random subgraph~\cite{crypto-random-subgraph}; arbitrary structures planted in random graphs~\cite{huleihel-general-hidden,counting-stars,EH-detecting-arbitrary}.

\item {\bf Planted problems in random matrices}: Sparse PCA~\cite{sos-detect,subexp-sparse,sparse-adversarial}; sparse clustering~\cite{sparse-clustering}; other spiked Wigner and Wishart models~\cite{HS-bayesian,sk-cert,ld-notes,spectral-planting,MW-amp,improper-regression,sharp-est}, including negatively spiked Wishart~\cite{sk-cert,rip-cert,improper-regression} and variants with general noise channels~\cite{kunisky-coordinate}; planted submatrix~\cite{SW-estimation,sharp-est} and variants with multiple communities~\cite{planted-v-planted,hidden-submatrices}; matrix denoising~\cite{semerjian}; spiked cumulant model~\cite{spiked-cumulant}.

\item {\bf Planted problems in random tensors \& hypergraphs}: Tensor PCA~\cite{sos-detect,ld-notes,tensor-cumulants} and sparse variant~\cite{sparse-tensor}; tensor decomposition~\cite{ld-tensor-decomp}; tensor-on-tensor association detection~\cite{t-on-t-association}; scalar-on-tensor regression~\cite{t-on-t-regression}; planted clique in hypergraphs~\cite{LZ-tensor}; planted dense subhypergraph~\cite{subhypergraph}; planted hyperloops~\cite{crypto-hyperloops}.

\item {\bf Other planted problems}: Sparse linear regression~\cite{fp} and ``mixed'' variant~\cite{mixed-sparse-reg}; group testing~\cite{grp-testing}; sparse CCA~\cite{sparse-cca}; planted vector in a random subspace, or equivalently, non-gaussian component analysis~\cite{sparse-adversarial,planted-vec}; group synchronization~\cite{group-synch-ld,kunisky-coordinate-2}; single-spike block mixtures~\cite{heavy-tailed}; gaussian mixture models~\cite{sq-ld,low-rank-gmm,lloyd-gmm,hd-clustering}; gaussian graphical models~\cite{sq-ld}; learning truncated gaussians~\cite{truncated-gaussian}; single-index models~\cite{single-index}; shuffled linear regresion~\cite{shuffled-regression}.

\item {\bf Non-planted optimization problems}: Maximum independent set in sparse graphs~\cite{ld-opt,indep-set,strong-ld} and hypergraphs~\cite{opt-hyper}, including the ``balanced'' variant in bipartite graphs \cite{opt-balanced}; maximum clique in $G(n,1/2)$~\cite{indep-set,strong-ld}; $k$-SAT~\cite{opt-ksat,strong-ld}; spin glass optimization problems~\cite{ld-opt,strong-ld,sellke-spherical}; perceptron models~\cite{barriers-perceptron,strong-ld}.

\end{itemize}

I would consider this a resounding success, in that low-degree polynomials give a unifying explanation for the apparent computational barriers in many different statistical problems, and therefore provide a principled way to predict where the barriers lie in new problems, at least for problems that are similar in style to the examples above.

In some sense, it is the low-degree \emph{upper bounds} that may be most relevant to the point we hope to make here, as they amass evidence for the following phenomenon: \emph{whenever poly-time algorithms succeed (e.g., at strong detection) there should exist a degree-$O(\log n)$ polynomial that succeeds (in the corresponding sense, e.g., strong separation)}. The contrapositive of this statement gives meaning to the lower bounds. Many of the above citations explicitly include low-degree upper bounds for the same notions of success (see Section~\ref{sec:def-success}) used in the lower bounds: \cite{SW-estimation,indep-set,opt-ksat,grp-testing,ld-tensor-decomp,MW-amp,planted-v-planted,dense-cycles,coloring-clique,subhypergraph,graphon,sublinear-clique,semerjian,ranked,sharp-est,sbm-many}. Various other algorithmic results appear to implicitly contain low-degree upper bounds, e.g.~\cite{HS-bayesian,graph-match-nearly,heavy-tailed-saw,graph-match-trees,graph-match-otter,decorated-trees}.

Most of the works cited above are concerned with predicting the limits of poly-time algorithms, but we would also like to have empirical evidence for the degree-runtime correspondence for \emph{super-polynomial} runtimes from Hypothesis~\ref{hyp:higher-D}. There are relatively fewer success stories for this so far, but also no known counterexamples (beyond the ones applicable to poly time, discussed in Section~\ref{sec:counter}). The most interesting examples to look at here are those with a smooth runtime-SNR tradeoff (see Remark~\ref{rem:sharp-smooth}), and the success stories of this type include variants of sparse PCA~\cite{subexp-sparse,sparse-adversarial}, as well as (sparse) tensor PCA~\cite{ld-notes,sparse-tensor}, and planted submatrix/subgraph~\cite{sharp-est}. There are also examples where the best known algorithms in the hard regime need fully exponential time $\exp(n^{1-o(1)})$, and we have degree-$\tilde\Theta(n)$ lower bounds to match; these include spiked matrix models~\cite{sk-cert,ld-notes,spectral-planting}, the stochastic block model~\cite{spectral-planting}, and various random optimization problems~\cite{strong-ld}. However, there is a notable gap in our knowledge here: I am not aware of any low-degree \emph{upper bounds}\footnote{Here I am being slightly picky about what constitutes an upper bound, namely it should establish strong/weak separation or bound $\MMSE_{\le D}$ as defined in Section~\ref{sec:def-success} and justified in Sections~\ref{sec:comm-sep}--\ref{sec:comm-upper}.} at super-logarithmic degree to match the algorithms of super-polynomial runtime mentioned above. It would be comforting to confirm that these exist as expected, to make our lower bounds more meaningful.

For completeness, we will mention a few more success stories where low-degree lower bounds have been used to draw useful conclusions \emph{beyond} the settings of detection/recovery/optimization, usually via connection to some testing problem as in Section~\ref{sec:variations-ld}. First we have certification problems: certifying bounds on various constrained PCA problems~\cite{sk-cert,spectral-planting,non-neg-pca}, certifying the restricted isometry property~\cite{rip-cert}, refuting $k$-colorability in $G(n,1/2)$~\cite{coloring-clique}, and certifying well-spreadness of random matrices~\cite{well-spread}. Also, low-degree lower bounds have been used to deduce conclusions about various non-Bayesian problems such as heavy-tailed statistics~\cite{heavy-tailed}, robust mean testing~\cite{robust-mean-testing}, sampling general Ising models~\cite{kunisky-sampling}, and improperly learning sparse regression~\cite{improper-regression}. Finally, the low-degree framework has been adapted to quantum learning problems~\cite{quantum-ld}.

\subsection{The Low-Degree Conjecture}
\label{sec:conj}

When a new problem comes along and we prove low-degree hardness for it, how confident can we be that there's actually no poly-time algorithm? We've seen above that low-degree polynomials have a good track record of predicting hardness for a certain style of problems, but how can we tell if our new problem belongs in that same class? This is especially important for high-stakes applications such as cryptography, where recent work has used low-degree lower bounds to justify the security of certain cryptosystems~\cite{crypto-graph,crypto-hyperloops,crypto-random-subgraph}.

We now discuss a line of work that seeks to formally define the class of ``natural high-dim stat problems'' for which degree-$O(\log n)$ polynomials are expected to be at least as powerful as all poly-time algorithms. This allows us to state precise conjectures, which can be ``stress tested'' and refined over time. We don't realistically hope to prove such a conjecture --- as this would imply P $\ne$ NP --- but the goal here is to put our beliefs about the low-degree framework into precise, falsifiable, terms.

\subsubsection{The Conjecture of Hopkins}

We use the term ``low-degree conjecture'' loosely to describe the belief that for a certain class of problems, if low-degree polynomials fail then so do all poly-time algorithms. A precise form of the low-degree conjecture was stated in the PhD thesis of Hopkins~\cite[Conjecture~2.2.4]{hopkins-thesis}, inspired by~\cite{pcal,sos-detect,HS-bayesian}. Although this conjecture was recently refuted~\cite{conj-false}, we will spend some time discussing it. We refer the reader to the original source for the full details, but the conjecture essentially states that for a certain class of testing problems $\PP = \PP_n$ versus $\QQ = \QQ_n$, if the ratio
\[ \sup_{f \in \RR[Y]_{\le D}} \frac{\EE_{Y \sim \PP}[f(Y)]}{\sqrt{\EE_{Y \sim \QQ}[f(Y)^2]}} \]
is $O(1)$ as $n \to \infty$ for some $D \ge (\log n)^{1+\epsilon}$ with $\epsilon > 0$ fixed, then no poly-time algorithm can achieve strong detection (as defined in Section~\ref{sec:task-def}). The reader might recognize this quantity as the ``advantage'' from Eq.~\eqref{eq:adv}, and boundedness of this quantity implies that no degree-$D$ polynomial can strongly separate $\PP$ and $\QQ$ (Proposition~\ref{prop:adv-to-sep}). We have discussed previously (Section~\ref{sec:alg-captured}) how various important algorithms are captured by degree-$O(\log n)$ polynomials, so the choice of $(\log n)^{1+\epsilon}$ is just slightly larger than this, to be safe. The crux of the conjecture is the assumptions imposed on $\PP,\QQ$, which we now summarize.
\begin{itemize}
    \item There are $\binom{n}{m}$ observed variables for a fixed integer $m$. These can be thought of as noisy $m$-wise observations among $n$ unknown variables. For instance, in planted clique, $n$ would be the number of vertices, and $m=2$.
    \item Under the null model $\QQ$, the observed variables are i.i.d.
    \item The distribution $\PP$ is invariant under permutation of the $n$ underlying variables.
    \item The testing problem must be robust to a small amount of noise. Formally, the conjecture states that if $\PP,\QQ$ have bounded advantage then no poly-time algorithm can distinguish $\QQ$ from a \emph{noisy} version of $\PP$, where a small constant fraction of entries are resampled according to the distribution $\QQ$.
\end{itemize}

\begin{remark}[Coordinate Degree]
\label{rem:coordinate-deg}
\noindent The conjecture of Hopkins also differs from our treatment in that the advantage is not defined using $\RR[Y]_{\le D}$ (the polynomials of degree at most $D$) but rather the polynomials of \emph{coordinate degree} at most $D$. This means each monomial involves at most $D$ variables, but can have arbitrary degree in those variables. For binary input variables, this coincides with the usual notion of degree. For the ``natural'' high-dim stat problems I'm aware of, degree and coordinate degree seem to behave roughly the same, although see~\cite[Section~2.4]{morris} and~\cite{strong-npp} for some possible counterpoints. Lower bounds against coordinate degree are formally stronger. On the other hand, degree may be more natural in some contexts due to its basis invariance, and may yield convenient formulas such as Eq.~\eqref{eq:gaussian-adv} in Section~\ref{sec:together}. Some proof techniques specifically leverage coordinate degree~\cite{KM-tree,ld-bot,kunisky-coordinate,kunisky-coordinate-2,ld-bot-opt}.
\end{remark}

\subsubsection{Known Counterexamples}
\label{sec:counter}

The conjecture of Hopkins stood for about 7 years --- aside from a refinement of~\cite{HW-counter} mentioned below --- and then very recently was refuted by~\cite{conj-false}. How to ``fix'' the conjecture is a subject of ongoing work and I will not attempt to propose an answer here. Instead, we will discuss which features of the conjecture are known to be necessary by detailing some known ``counterexamples,'' i.e., settings where low-degree polynomials are beaten by some poly-time algorithm.

\begin{itemize}

    \item {\bf XOR-SAT}: Suppose there are binary variables $x_1,\ldots,x_n \in \{0,1\}$ and we are given $m$ equations of the form $x_i + x_j + x_k \equiv b \pmod{2}$, with $i,j,k$ chosen randomly from $[n]$. We aim to distinguish between two cases: ($\QQ$) each $b$ is chosen randomly from $\{0,1\}$ and ($\PP$) the $b$ values are all chosen to be consistent with some planted assignment $x$. In the regime $m \gg n$, there is no satisfying assignment under $\QQ$ and so $\PP,\QQ$ can be distinguished in poly time by using Gaussian elimination to solve the system of linear equations over the finite field $\FF_2$. However, for $m \ll n^{3/2}$, low-degree polynomials fail to distinguish $\PP,\QQ$. This has long been documented as an example where popular ``predictors'' of average-case hardness are ``fooled,'' including the statistical query model~\cite{sq-csp}, sum-of-squares~\cite[Section~3.1]{boaz-notes}, and statistical physics~\cite[Section~IV.B]{ZK-phys-survey}, as well as low-degree polynomials~\cite[Section~2.2]{hopkins-thesis}. All of these sources argue, however, that this is not a ``serious'' counterexample because Gaussian elimination is a ``brittle'' algebraic algorithm that breaks down when a small amount of noise is added: if we consider a modified $\tilde{\PP}$ where only $1-\epsilon$ fraction of constraints are satisfied for an arbitrary constant $\epsilon > 0$, the problem of distinguishing $\QQ,\tilde{\PP}$ is actually believed to be hard when $m \ll n^{3/2}$. The threshold $m \approx n^{3/2}$ predicted by low-degree polynomials (and other methods) represents a ``robust'' computational threshold, that is (depending on the context) arguably more meaningful than the ``true'' computational threshold! For this reason, the assumption of noise-robustness in Hopkins' conjecture is essential.

    \item {\bf LLL}: The Lenstra--Lenstra--Lov\'asz (LLL) algorithm for \emph{lattice basis reduction}~\cite{LLL} has been used to ``break'' a number of suspected computational barriers in certain statistical problems~\cite{corr-ret,lll-reg,lll-neuron,lll-cluster,lll-ngca}, building on a solution to the average-case \emph{subset sum} problem~\cite{LO-lll,F-lll}. To give one example, for the task of finding a planted $\{\pm 1\}$-valued vector in a random $d$-dimensional subspace of $\RR^n$, low-degree algorithms require $d \ll \sqrt{n}$~\cite{planted-vec} but LLL can succeed for all $d \le n-1$~\cite{lll-cluster,lll-ngca}. This result, and the others above, crucially require the problem to be ``noiseless'' in some sense, since LLL (like Gaussian elimination) has very low noise tolerance. As a result, these examples do not refute Hopkins' conjecture, but they do carry an important warning about noiseless problems. One point made in~\cite[Section~1.3]{lll-cluster} is that planted clique is noiseless in the sense that it does not satisfy Hopkins' notion of noise-robustness.

    \item {\bf Error-Correcting Codes}: In~\cite{HW-counter}, a few examples based on error-correcting codes illustrate two different points. The first example shows how to refute Hopkins' conjecture with the symmetry assumption ($\PP$ is permutation-invariant) removed. The second example shows that the original form of Hopkins' conjecture is false when $\QQ$ has entries i.i.d.\ from a \emph{continuous} distribution, but the conjecture can be ``fixed'' by changing the type of noise added to $\PP$ in the robustness assumption. However, even the fixed version has now been refuted by~\cite{conj-false}, which also gives a construction based on a (highly symmetric) error-correcting code.

    \item {\bf Heavy-Tailed Noise}: The ``classical'' high-dim stat problems tend to involve distributions with well-behaved moments, e.g., subgaussian, hypercontractive, or similar. The case of heavy-tailed noise, where the moments grow very quickly, is less well-explored. In the extreme case where some moments are infinite, polynomials cease to have finite mean and variance. Based on this idea,~\cite[Section~2.4]{morris} constructs a testing problem where low-degree polynomials are inferior to some other poly-time algorithm. This does not refute Hopkins' conjecture because $\QQ$ is not i.i.d.

    \item {\bf Small Spectral Gap}: In Remark~\ref{rem:log-deg} we have argued that low-degree polynomials capture spectral methods, as long as the leading eigenvalue under $\PP$ exceeds that under $\QQ$ by a $1+\eps$ factor for a constant $\eps > 0$. It is less clear what happens when $\eps = o(1)$, which would occur, for instance, in the spiked Wigner model with SNR slightly above the critical threshold but approaching the threshold at some rate. This is a scenario where we may expect to see spectral methods succeed but low-degree polynomials fail. However, this will not yield a counterexample to Hopkins' conjecture, since the extra noise added to $\PP$ effectively decreases the SNR by a small constant, moving us below the threshold. An example of~\cite{conj-false} gives a setting (with non-product $\QQ$) where low-degree tests fail yet the leading eigenvalue works, even in the presence of noise.
    
    \item {\bf Broadcasting on Trees}: The \emph{broadcasting on trees} problem is different in structure from the problems considered in Hopkins' conjecture. A random process propagates down the vertices of a tree, and the goal is to infer the label of the root node given the labels of the leaves (so this is a recovery rather than detection problem). Information-theoretically optimal performance is given by the computationally efficient \emph{belief propagation (BP)} algorithm, but low-degree polynomials fail unless the model parameters lie above the so-called \emph{Kesten--Stigum (KS)} threshold~\cite{KM-tree,ld-bot,ld-bot-opt}. As discussed in~\cite{ld-bot}, the KS threshold is known to be the fundamental limit for certain complexity measures, including some notion of robust reconstruction, so the low-degree threshold still seems to carry some meaning here.
    
\end{itemize}

\subsubsection{Potential Extensions and Outlook}

Hopkins' conjecture --- even if it were true --- applies only to a limited fraction of the settings where the low-degree framework has been fruitful, and so in principle there might be room to expand or generalize the conjecture (subject to appropriate assumptions to make it true). The original conjecture considers the case of ruling out strong detection in testing between some ``planted'' distribution $\PP$ and some i.i.d.\ ``null'' $\QQ$. In the time since that conjecture was formed, we have started using low-degree polynomials to make predictions about other types of statistical tasks, including weak detection, as well as recovery, optimization, and refutation. We have also found that low-degree polynomials appear to make reliable predictions for more general testing problems than those treated in the conjecture, including certain settings where $\QQ$ is not a product measure (see Section~\ref{sec:planted-v-planted-pf}) or models like sparse PCA (as defined in Section~\ref{sec:mot-ex}) that don't quite fit the symmetry assumptions.

So far, there have not been attempts to formulate a precise ``low-degree conjecture'' for other tasks like recovery, optimization, or refutation (although low-degree lower bounds do exist in these settings; see Section~\ref{sec:variations-ld}). For the case of non-planted optimization problems, there is some discussion in Appendix~A of~\cite{ld-opt} surrounding what we do (and don't) expect low-degree polynomials to achieve (e.g., we should only ask them to find an approximate solution rather than \emph{the} global optimum). Also,~\cite{strong-npp} shows that the low-degree heuristic does not hold up in a certain optimization problem --- \emph{number partitioning} --- and suggests that this might be fixable by considering coordinate degree instead. As a general rule, when we are attempting to use the low-degree framework in a setting whose ``style'' deviates somewhat from existing work, it becomes especially important to prove low-degree \emph{upper bounds} (success in the ``easy'' regime) to illustrate that the notion of ``success'' used in our lower bounds is meaningful.

Stronger versions of the low-degree conjecture appear in~\cite{precise-error,alg-contig,conj-sharp-sbm}, dealing with the precise limiting value of the advantage rather than just its boundedness. However, these works do not attempt to specify a precise class of models where the conjecture is expected to apply.

Where do we go from here? The numerous success stories suggest that the ``low-degree conjecture'' holds true for a broad class of models, but describing this class precisely has proven difficult. Part of the difficulty is that it is hard to know whether one has succeeded without waiting years to see if the conjecture survives the test of time. On the plus side, the counterexamples we have discovered along the way help to guide our interpretation of low-degree lower bounds. Degree complexity often appears to carry some intrinsic meaning even in cases where it doesn't quite align with time complexity. As we continue to discover both success stories and counterexamples, we better our understanding of how to form responsible conjectures about time complexity, even in the absence of an all-purpose conjecture.

\section{Relations to Other Frameworks}
\label{sec:relations}

Besides low-degree polynomials, there are a variety of other popular frameworks for probing the average-case complexity of statistical problems. Each brings a different perspective to the table, and this section will attempt to elucidate some of the, often subtle, relations among them. Most of the frameworks (with the exception of reductions) are tied to a particular class of algorithms that can be ruled out as a form of evidence for hardness. The proponents of each framework tend to believe that the associated class of algorithms is ``all-powerful,'' in the sense that if they fail at some task (within some nebulously-defined class of ``natural'' tasks) then so do all efficient algorithms.

First, what do we desire from one of these frameworks? There are a few possible answers:

\begin{itemize}

    \item {\bf Reliable Predictions}: Ideally, a framework for average-case complexity should make reliable predictions about when problems are hard, consistent with our best known algorithmic results. Examples like XOR-SAT from Section~\ref{sec:counter} will basically ``fool'' all these frameworks, so we need to set our expectations accordingly. For a framework to be considered ``reliable'' I will ask that it at least predicts the ``right'' thresholds for problems like planted clique, tensor PCA, and the other examples mentioned in Section~\ref{sec:high-dim}. This criterion suggests a ``competitive'' viewpoint where we pit the different frameworks against each other to see which makes the best predictions.
    
    \item {\bf Failure of Algorithms}: Another desirable outcome is to prove unconditionally that various specific classes of algorithms fail to solve the (conjectured) ``hard'' problems. In this sense, the different frameworks tend to complement each other by ruling out different types of algorithms. This suggests a more ``harmonious'' viewpoint, where we aim to build rigorous connections between the different frameworks in order to rule out many different algorithms all at once. Some ``meta-theorems'' have emerged that allow us to automatically deduce hardness in one framework from hardness in another, with a prime example being the connection between low-degree polynomials and the statistical query model~\cite{sq-ld}.
    
\end{itemize}

In the discussion of each framework below, we will cover both the ``competitive'' view (judging the quality of predictions) and the ``harmonious'' view (exploring known connections to other frameworks and algorithms). At the end, Section~\ref{sec:summary} provides a summary.

\subsection{Reductions}
\label{sec:reductions}

Average-case reductions establish rigorous connections between different average-case problems, allowing us to argue that \emph{if} one problem is hard, then so is another. This is different in spirit from the other frameworks we will discuss, since the point here is not to study a restricted class of algorithms but to uncover fundamental relationships \emph{between} problems, which is a separate (but related) goal of independent interest.

It has become popular to prove hardness for statistical problems \emph{conditional} on the conjectured hardness of the planted clique (PC) problem for $k \ll \sqrt{n}$. The work of~\cite{BR-reduction} was influential in first bringing this idea to the statistics community (although PC was used earlier as a hardness assumption in other contexts, e.g.~\cite{hiding-cliques-crypto}), and subsequent work has shown PC-hardness for a variety of statistical problems, e.g.~\cite{ma-wu-reduction,HWX-reduction,BBH-reduction,BB-opt} and references therein. PC-hardness is perhaps the ``gold standard'' in average-case complexity, since we need only believe hardness for the PC problem and can avoid the question ``which class of problems is suitable?''\ that we have labored over in Section~\ref{sec:conj}.

Hardness results based on the standard PC assumption are somewhat limited to problems of similar structure to PC. The class of problems has been expanded by starting from other assumptions such as the \emph{planted dense subgraph recovery conjecture}~\cite{BBH-reduction} or \emph{secret leakage planted clique conjecture}~\cite{secret-leakage}, and this required new tools such as \emph{dense Bernoulli rotations}~\cite{secret-leakage}. The low-degree framework complements the reduction framework by backing up these conjectures, as in~\cite{SW-estimation} and~\cite{secret-leakage} respectively. A decent portion of the problems listed in Section~\ref{sec:list} now have some form of reduction-based hardness~\cite{ma-wu-reduction,HWX-reduction,BBH-reduction,univ-lb,BB-opt,secret-leakage}. Other recent advances in our machinery for average-case reductions include~\cite{extra-triangles,det-rec-reduction,reduction-some-stat,equiv-reduction}.

Another line of reduction-based results start by assuming hardness of worst-case lattice problems --- a standard cryptographic assumption --- and use this to establish hardness for certain statistical problems~\cite{clwe,lll-neuron,lwe-mixtures,crypto-massart,crypto-agno,sparse-reg-lattice,improved-hardness-halfspaces,crypto-score,lwe-linear}. Notably, lattice-based hardness has been shown for the \emph{symmetric binary perceptron}~\cite{VV-perceptron}, a random optimization problem which differs from the other examples above by being \emph{non-planted}.

There are a number of synergies between the low-degree and reduction approaches. As mentioned above, the low-degree framework can provide evidence to support the starting conjectures used for reductions. It can also be helpful to have a low-degree prediction for where the threshold is expected to lie, before attempting a reduction. Some phenomena predicted by low-degree polynomials --- such as sharp thresholds (see Remark~\ref{rem:sharp-smooth}) --- have not yet been recovered using reductions, and this remains a good question for future work. Recent work of~\cite{optimal-distinguishers-pc} transfers low-degree hardness to reduction-based hardness in the context of planted clique, resulting in a proof that low-degree distinguishers for planted clique are optimal in a sharp sense, conditional on the PC conjecture.

\subsection{Sum-of-Squares}
\label{sec:sos}

The \emph{sum-of-squares (SoS) hierarchy} is an algorithmic tool based on semidefinite programming (SDP). It gives increasingly powerful algorithms --- parameterized by $d$, the degree of polynomials used --- at the expense of increased runtime $n^{O(d)}$. Its relation to the low-degree framework is multifaceted, and the histories of these two approaches are intertwined. We refer the reader to~\cite{sos-survey} for a survey on the use of SoS in high-dim stats. The discussion below will center around the following three points:
\begin{enumerate}
    \item[(1)] SoS is generally viewed as a ``stronger'' class of algorithms than low-degree polynomials.
    \item[(2)] But for high-dim stat problems, it is conjectured that SoS is not \emph{strictly} stronger.
    \item[(3)] Furthermore, SoS lower bounds often pertain to the \emph{refutation} task, making them (in a sense) \emph{weaker} than the associated low-degree lower bounds for \emph{detection}.
\end{enumerate}

Regarding (1), algorithms based on SoS have proved useful well beyond Bayesian inference problems, including settings where the input has worst-case or adversarial components, e.g.~\cite{mixture-robust-sos,robust-moment-est,heavy-tailed,robust-sparse-mean,semi-random-clique}. In contrast, low-degree polynomials primarily seem useful in fully Bayesian settings, so in this sense, SoS represents a more powerful class of algorithms. SoS has been championed as a ``meta-algorithm'' that potentially achieves the best possible guarantees for a wide range of computational problems~\cite{sos-quest}. Some of the initial excitement about SoS was fueled by its ability to solve certain candidate hard instances for the (worst-case) Unique Games problem, where weaker hierarchies failed~\cite{sos-hyp}. To give one recent illustration of the power of SoS closer to our context, methods based on SoS can simulate AMP (approximate message passing) algorithms, and furthermore makes them robust to certain adversarial corruptions in the input~\cite{sos-amp-robustly} (see also~\cite{sos-gaussian-process,fast-robust-amp}).

Given the many algorithmic successes of SoS, lower bounds against SoS are considered a potential indication of fundamental hardness (subject to the same kinds of caveats we have discussed in Section~\ref{sec:conj}). For the planted clique problem, the landmark result~\cite{pcal} shows failure of SoS in the conjectured hard regime $k \ll \sqrt{n}$, by introducing an approach called \emph{pseudo-calibration}. The proof is extremely technical, but a key bottleneck comes from analyzing low-degree polynomials, specifically the ``advantage'' $\Adv_{\le D}$ from Eq.~\eqref{eq:adv}. The low-degree polynomial framework was born out of this connection, and the first works that used low-degree polynomials to predict thresholds, justified this largely based on the heuristic connection to SoS~\cite{sos-detect,HS-bayesian,hopkins-thesis}. The ``pseudo-calibration conjecture''~\cite[Conjecture~1.2]{sos-detect} (see also~\cite[Conjecture~3.5]{sos-survey}) essentially posits that if low-degree polynomials fail to solve a problem (within a certain well-defined class of high-dim detection problems) then SoS also fails. This is point~(2) above. If the conjecture is resolved affirmatively, it would give a powerful tool for proving SoS lower bounds, since low-degree lower bounds are generally more tractable to prove. Some progress toward the pseudo-calibration conjecture is made in~\cite{sos-detect}, essentially showing that any SoS algorithm can be made into a spectral method on some matrix with low-degree entries (up to some loss in the SNR). Based on the discussion in Section~\ref{sec:alg-captured} it might appear that this spectral method can be implemented using low-degree polynomials (thus proving the conjecture), but actually there is no guarantee of this: for all we know, the spectral method might be poorly conditioned, say, with its maximum positive eigenvalue (which contains the ``signal'') obscured by a huge negative eigenvalue.

These days, low-degree polynomials and SoS exist as two distinct entities. While low-degree lower bounds were first invented in the context of SoS, they can (in hindsight) be motivated more directly without reference to SoS, as we have done in this survey. But how exactly do they compare in power (in the context of high-dim stats)? While I'm not aware of a formal comparison in either direction, I think there is a common tendency to view SoS as a ``stronger'' algorithm and therefore to view SoS lower bounds as a ``stronger'' form of hardness. Point~(3) above is a counterpoint to this, based on the fact that the two classes of algorithms are (in their most basic forms) actually aiming for different objectives, \emph{detection} and \emph{refutation}, which are contrasted in Section~\ref{sec:tasks}. Consider the planted clique result~\cite{pcal} for example. The SoS algorithm is an SDP relaxation that attempts to \emph{refute} the existence of a $k$-clique using the sum-of-squares proof system, and the lower bound says that this refutation fails on input $G(n,1/2)$ when $k \ll \sqrt{n}$. This can naturally be seen as evidence that the \emph{refutation} task is hard, but this need not imply hardness for the (potentially easier) \emph{detection} task. So if we are interested in detection, SoS may not be the best algorithm: it ties its own hands by trying to solve a harder problem than is necessary. This does not actually manifest itself in planted clique, since the detection and refutation thresholds happen to coincide, but there are other settings where SoS will underperform at detection or optimization because it is trying to solve a strictly harder refutation or certification problem; for instance, compare~\cite{affine-planes} with~\cite{opt-sk}, or see~\cite[Section~1.1]{subhypergraph} (namely the comparison to~\cite{sos-densest-k}).

There is also a counterpoint to the above counterpoint: while the main theorem of~\cite{pcal}, taken at face value, should not be considered evidence for hardness of \emph{detection}, the proof implicitly contains a low-degree lower bound for detection. The same is true for other SoS lower bounds that use the pseudo-calibration approach, e.g.,~\cite{KMOW,lifting-sos,affine-planes,sos-machinery,sos-sparse-indep,sos-densest-k,sos-ultra,sos-ngca}: these are implicitly leveraging a hard testing problem and (more-or-less) contain a low-degree lower bound for this problem, even if this is not reflected in the main theorem statement. On a technical level, analyzing $\Adv_{\le D}$ corresponds to controlling the pseudo-expectation value of 1, while proving an SoS lower bound requires the additional step of showing the entire moment matrix is PSD which is much more challenging. In this sense, SoS lower bounds proved via pseudo-calibration are at least as strong as low-degree lower bounds.

The previously mentioned underperformance of SoS for certain detection or optimization tasks is by now well documented, and there have been proposals for how to ``fix'' SoS so that it performs on par with the best low-degree algorithms (and with added ``robustness''). The \emph{local statistics hierarchy} is a modification of SoS suited to detection tasks~\cite{local-stats}, and it has recently been adapted to optimization as well~\cite{sos-amp-robustly}. Lower bounds against local statistics have been shown in a few settings, and these do not appear to have a formal comparison to low-degree lower bounds in either direction. Curiously, local statistics lower bounds seem more tractable than their low-degree counterparts in settings involving random \emph{regular} graphs~\cite{local-stats,spectral-planting,cert-lift-mono} (although these examples only rule out SoS degree 2).

One additional caveat is that each SoS lower bound pertains to some specific choice of how to formulate the problem as a system of constraints. Sometimes there can be more than one natural formulation, and these may potentially be different in power, as discussed in~\cite[Section~1.5]{KM-coloring}.

Overall, provided we are careful about differentiating tasks such as detection versus refutation, the low-degree and SoS frameworks both appear to make the same, ``reliable,'' predictions for computational thresholds. Low-degree lower bounds have the advantage of being easier to prove, making them more abundant and often sharper. SoS lower bounds have the advantage of potentially being perceived as substantially stronger evidence for hardness, depending on your views.

\subsection{Statistical Query Model}

In the \emph{statistical query (SQ) model}~\cite{kearns}, the algorithm is not granted access to the entire input, but is restricted to asking certain types of questions (``queries'') about it. This model can be defined only in scenarios where the input consists of a list of samples (vectors) that are drawn i.i.d.\ from some unknown distribution. The unknown distribution often depends on some planted signal that we aim to recover. SQ algorithms do not have access to individual samples but can query the average value of a function over the samples and receive the answer up to some adversarial error of bounded size. The SQ framework can be used to make predictions about the computational complexity of recovering the signal, based on the number and accuracy of queries required. Some classical works include~\cite{sq-clique,sq-csp,DKS-sq}, and we refer to~\cite[Ch~8]{robust-stat-book} for an overview.

The SQ framework has found many successes in ``learning'' problems, where the goal is to learn an unknown function $f$ (within some class) given a list of $(x,f(x))$ pairs; see e.g.~\cite{sq-gd,sq-massart,sq-list-reg,sq-two-layer,sq-weaker,truncated-gaussian,sq-half} for some recent examples. I feel that the types of problems where SQ is traditionally applied are somewhat different in style from those I have focused on in this survey, although there is definitely some overlap. A problem like planted clique cannot be directly addressed in the SQ framework because the input does not consist of i.i.d.\ samples. However, a bipartite version of the clique problem can be cast in the SQ model, and predicts the ``correct'' threshold $k \approx \sqrt{n}$~\cite{sq-clique}. A different approach to casting planted clique and related problems in the SQ model involves taking each sample to be a ``diluted'' copy of the entire input, with lower signal strength~\cite{sq-tensor-pca,sq-ld}.

For detection problems, the work of~\cite{sq-ld} gives an equivalence result between SQ and low-degree predictions. More accurately, under reasonable assumptions on the problem setting, this result shows equivalence between two \emph{lower bound methods}, namely the \emph{statistical dimension (SDA)} for SQ and the advantage~\eqref{eq:adv} for low-degree. This gives a useful tool for transferring hardness results from one framework to the other. The implication from SQ hardness to low-degree hardness has somewhat milder assumptions than the other direction.

On the other hand, the SQ and low-degree \emph{models} themselves are not really equivalent. It was discovered in~\cite{sq-csp} that the SQ framework makes the ``wrong'' prediction for planted CSPs (constraint satisfaction problems), with SQ algorithms requiring quadratically more constraints than the best known poly-time algorithms. This same work introduced some new, stronger variants of the SQ model (``MVSTAT'' and ``1-MSTAT'') to remedy this. To my knowledge, these have not been widely used since then. The SQ model (in its original ``VSTAT'' form) also struggles with tensor PCA, again with SQ algorithms underperforming~\cite{sq-tensor-pca}. This issue essentially occurs because computing the spectral norm of a matrix requires more samples with the VSTAT oracle than it would with direct access to the samples.

Overall, SQ appears to be a less ``reliable'' predictor than low-degree polynomials for the style of problems we have focused on in Section~\ref{sec:high-dim}, although it does seem reliable for the coarser question of differentiating polynomial versus super-polynomial sample complexity in learning problems. I am not aware of SQ results that capture some of the phenomena we have discussed in Section~\ref{sec:hope}, namely sharp thresholds (such as the BBP transition in the spiked Wishart model) or detection-recovery gaps. Existing SQ lower bounds are stated for either detection or recovery, and the proofs crucially leverage SQ-hardness of testing against the so-called ``reference distribution'' (which plays the role of our ``null'' model). Curiously, there can be problems that are ``easy'' in the SQ model but believed not to admit a poly-time algorithm, e.g.~\cite[Section~1.1]{hard-robust-mean}.

One strength of the SQ model is its ability to capture a variety of algorithmic results. SQ algorithms are allowed arbitrary computation and are only limited by their query access to the input. This is quite different from low-degree polynomials, so it is significant that low-degree lower bounds can be used to rule out these types of algorithms via~\cite{sq-ld}. SQ lower bounds are commonly cited as ruling out various popular approaches including certain convex programs~\cite{sq-cvx} as well as gradient descent~\cite{sq-gd}. The meaning of these claims is that the \emph{standard analysis} of these methods can be carried out within the SQ framework, but SQ lower bounds do not rigorously imply that these methods fail. Together with~\cite{sq-ld}, this is (in some sense) a pathway to linking low-degree polynomials with e.g.~certain convex programs. However, the SQ model does not capture SoS because the feasible region of an SoS program depends on the individual samples, which are not observed.

\subsection{Statistical Physics and Approximate Message Passing}
\label{sec:relations-amp}

As discussed previously in Section~\ref{sec:alg-captured}, \emph{approximate message passing (AMP)} is a framework for building iterative algorithms that have been widely successful in achieving state-of-the-art performance for both planted estimation problems and non-planted optimization problems. A related suite of tools associated with ideas from statistical physics, has proved useful for predicting computational (and statistical) phase transitions in high-dim stats. These tools include the related \emph{belief propagation (BP)} algorithm along with the \emph{cavity} and \emph{replica} methods. A few landmark results in this area include the prediction of the Kesten--Stigum (KS) transition in the stochastic block model~\cite{decelle} and the replica formula in spiked matrix estimation~\cite{LKZ-sparse,MMSE} (with the information-theoretic aspects later proven rigorously~\cite{pf-replica,adaptive-interp,short-pf-replica}), as well as the computational threshold for spin glass optimization~\cite{opt-spin-glass}. See~\cite{info-phys-book,ZK-phys-survey,notes-phys,GMZ-phys-survey,phys-notes} for some general references.

The predictions of computational phase transitions by these methods essentially posit optimality of BP or AMP algorithms. These predictions are often unmatched in their sharpness, pinning down sharp thresholds and exact limiting values for the estimation error (e.g., mean squared error). As discussed in Section~\ref{sec:alg-captured}, we generally expect that polynomials of degree $O(\log n)$, or sometimes $O(1)$, capture BP- or AMP-style algorithms, or the spectral methods obtained from linearizing them (e.g.~\cite{spectral-redemption,bethe-hessian,spectral-phase-retrieval,bp-unstable}). However, this has not been verified in high generality, and still needs to be checked on a problem-by-problem basis, e.g.~\cite{MW-amp,sos-amp-robustly,fourier-iterative,alg-universality}. The broadcast tree model gives one counterexample to this, since BP is not captured by low-degree polynomials in that setting~\cite{KM-tree,ld-bot,ld-bot-opt}.

In a number of settings, AMP remains the best known poly-time algorithm in a sharp sense, so the predictions made by this framework appear reliable. On the other hand, there are a few very classical models where physics-based methods struggle, namely tensor PCA and planted clique. For tensor PCA, the AMP algorithm derived in~\cite{RM-tensor-pca} is shown to be strictly suboptimal compared to certain spectral methods. In planted clique, AMP reaches a sharp threshold $k = \sqrt{n/e}$ in nearly linear time~\cite{amp-clique}, but other (slower, but still poly-time) algorithms reach $k = \epsilon \sqrt{n}$ for an arbitrary constant $\epsilon > 0$~\cite{alon-clique}. In general, AMP seems great for nailing down sharp thresholds but tends to struggle with smooth tradeoffs between SNR (signal-to-noise ratio) and runtime (see Remark~\ref{rem:sharp-smooth}) because it does not include a ``knob'' for increasing the statistical power at the expense of longer runtime. One attempt to add such a ``knob'' is based on the \emph{Kikuchi hierarchy}~\cite{kik51,gen-bp}, also rooted in statistical physics, which ``redeems'' AMP and reaches the optimal SNR-runtime tradeoff in tensor PCA~\cite{kikuchi}. So far, the Kikuchi approach has been limited to tensor PCA and a few related problems (such as CSP refutation~\cite{kikuchi,smoothed-csp,simple-moore}), but not, say, planted clique. The difficulty in extending this approach stems from non-existence of a ``trivial stationary point'' at which to compute the Hessian of the Kikuchi free energy.

In light of the above examples, the low-degree framework is perhaps a more comprehensive and reliable predictor for a wider class of models and scaling regimes. Still, the physics-based methods have a lot to offer, as they give very sharp predictions with relative ease in many situations where our low-degree lower bounds are still lagging behind. It is an ongoing challenge to corroborate some of these physics predictions with low-degree evidence. For instance, in the spiked Wigner model we now know that the exact mean squared error of AMP is optimal among constant-degree polynomials~\cite{MW-amp}, but it remains open to extend this to higher degree. The proof uses the fact that AMP-style algorithms correspond to a certain sub-class of polynomials that are ``tree-structured'' (see also~\cite{univ-polytope,fourier-iterative}).

The physics-based approaches also complement the low-degree framework in that they give a clear (and rather different) explanation for what makes these problems hard. This explanation is ``geometric'' in nature rather than ``algebraic.'' Notably, the performance of AMP can be visualized in terms of a \emph{(``replica-symmetric'') free energy landscape}, where the algorithm's downhill trajectory toward the optimal solution may or may not be impeded by a \emph{free energy barrier}. This is illustrated in Figure~1 of~\cite{pf-replica} or Figure~1 of~\cite{notes-phys}. It would be very interesting to uncover a deeper reason why the geometric and algebraic predictions often coincide. Some partial progress is given by~\cite{fp}, which builds a rigorous connection between the low-degree advantage~\eqref{eq:adv} and a quantity called the \emph{annealed Franz--Parisi potential}, which we will call FP for short. The quantity FP intuitively captures the idea behind the free energy picture, as explained in~\cite[Section~1.3]{fp}, but it is not literally the same quantity that physicists would use and is not directly related to AMP (nor is it quite the same as the annealed potential originally considered by Franz and Parisi~\cite{fp98}). In fact, FP is more closely tied to detection rather than recovery, due to its connection with the low-degree advantage. More recently, a variation of FP has been shown to be directly connected to SQ lower bounds~\cite{fp-sq}, and this applies to a broader class of problems than the original work~\cite{fp} that introduced FP.

Finally, it has been shown that AMP can be used to (approximately) solve certain convex optimization problems on random data, including the popular LASSO method for sparse linear regression~\cite{lasso-risk,lasso-amp,amp-cvx-nonsep,slope}. AMP is captured by low-degree polynomials so we can claim that, at least on some heuristic level, these convex programs are ruled out by low-degree lower bounds. Perhaps this connection can be made more rigorous.

\subsection{Overlap Gap Property}

A related, but formally distinct, approach to the free energy landscapes mentioned above is the \emph{overlap gap property (OGP)}, another geometric property that impedes the progress of certain algorithms. There are various forms of OGP tailored to different settings, and we refer to~\cite{ogp-survey,turing-survey} for a survey. OGP has been used both in non-planted optimization settings and planted recovery settings, and it will be important to differentiate these two cases below.

\subsubsection{Non-Planted OGP}

First we discuss non-planted problems, which was historically the first setting where OGP emerged: in the context of searching for large independent sets in sparse random graphs, it was discovered by~\cite{GS-ogp} that any pair of ``good'' solutions (i.e., independent sets above a certain size) must either have ``small'' or ``large'' overlap, with an ``overlap gap'' in between. Variations of this property have been used to prove failure of algorithms that are sufficiently ``stable'' under perturbations to the input. Notably, certain variants of OGP can be used to rule out low-degree polynomials, e.g.~\cite{ld-opt,indep-set,opt-ksat,barriers-perceptron,strong-ld}, and this is currently the \emph{only} technique we have for proving low-degree lower bounds in non-planted settings. Other algorithms that have been ruled out using OGP include local algorithms on sparse graphs~\cite{GS-ogp,RV-half-opt,ogp-maxcut}, AMP~\cite{ogp-amp,branching-ogp}, low-depth boolean circuits~\cite{ld-opt}, and online algorithms~\cite{barriers-discrepancy,ogp-graph-alignment,online-indep}. Some recent examples~\cite{some-easy-ogp} show that OGP is present even in some ``easy'' problems such as computing shortest paths, meaning that OGP may not be a reliable predictor of hardness on its own. However, in these particular examples, OGP does not rule out degree-$O(\log n)$ polynomials (and in fact, such polynomials succeed), so these are not ``counterexamples'' for the low-degree framework.

\subsubsection{Planted OGP}

Now we move on to planted problems, where the situation is quite different. A natural notion of ``planted'' OGP was defined by~\cite{GZ-ogp}, and the same authors discovered that it surprisingly predicts the ``wrong'' threshold\footnote{Technically, they do not establish OGP below this threshold, but conjecture this based on first-moment behavior. In a different planted model, sparse tensor PCA, OGP has been established in part of the easy regime~\cite{geometric-rule}.} $k \approx n^{2/3}$ for planted clique~\cite{GZ-clique}. This \emph{planted} version of OGP does not rule out low-degree polynomials, but it does rule out certain ``local search'' algorithms when $k \ll n^{2/3}$, showing that these algorithms are strictly suboptimal. Here the meaning of ``local'' is that the algorithm's state is a subset of $k$ vertices, and this is iteratively updated to a ``nearby'' subset in a way that attempts to increase the density of edges among those vertices. (This is distinct from the other meaning of ``local'' we have mentioned for sparse graphs in Section~\ref{sec:alg-captured}.) Furthermore,~\cite{GZ-clique} discovered that ``overparameterization'' fixes the issue and moves the OGP threshold to the ``correct'' computational threshold $k \approx \sqrt{n}$; this effectively means considering local search algorithms that move around on subsets of size $k' \gg k$.

The planted version of OGP, along with the related notion of \emph{free energy barriers}, complements the low-degree framework by ruling out a different class of algorithms than the ones excluded by low-degree lower bounds. For instance, planted OGP can rule out greedy search and gradient descent, along with Markov chains and Langevin dynamics (which are ``noisy'' versions of greedy search and gradient descent, respectively)~\cite{GZ-ogp,alg-tensor,GZ-clique,submatrix-ogp,ogp-sparse,grp-ogp}. I am not aware of any ``natural'' problem where low-degree polynomials are beaten by one of these local search algorithms, but local search algorithms are very popular and so it is valuable to rigorously understand their limits. As in the planted clique example above, a common theme is that the most ``naive'' local search algorithm might be suboptimal, but often there is a simple ``fix'' that gives optimal performance (matching low-degree algorithms). To give another example, gradient descent is suboptimal for tensor PCA~\cite{alg-tensor}, but \emph{averaged gradient descent} provides a fix~\cite{iron-rough}. See also~\cite{local-sparse-tensor} for other ways to fix local search methods. In general, the ``right'' fix might be problem-specific, so it may be difficult to use local search algorithms as a ``reliable'' predictor on their own. For a certain class of problems with additive noise, we know that the naive local search algorithms cannot beat the low-degree threshold~\cite[Section~2.2]{fp}, and furthermore an intriguing ``geometric rule'' appears to predict exactly how suboptimal the naive local search algorithms are~\cite{geometric-rule}.

\subsubsection{More on Markov Chains and Local Search}

Analyzing the performance of Markov chains for average-case inference problems is a difficult technical challenge, both in terms of upper and lower bounds. One difficulty is that the relevant notion of finding a good solution (``hitting time'') is not the same as the commonly studied notion of ``mixing time.'' Until recently, our tools for proving positive results were somewhat lacking, as illustrated by the story of planted clique: the influential work of Jerrum~\cite{jerrum} first predicted the threshold $k \approx \sqrt{n}$ due to failure of a particular Markov chain below it, and it was assumed (but not proven) that this Markov chain should succeed above the threshold; however, recent work shows that in fact this Markov chain fails for all $k \ll n$~\cite{clique-elude}. Some progress in positive results for Markov chains (and Langevin dynamics) has been made by~\cite{alg-tensor,clique-mcmc,locally-stationary,geometric-rule}. On the side of lower bounds, many existing results only rule out Markov chains with worst-case initialization, e.g.~\cite{alg-tensor,GZ-clique,submatrix-ogp,ogp-sparse,grp-ogp} rather than random initialization or a specific natural initialization. One recent work~\cite{clique-elude} overcomes this, showing failure from the natural ``empty'' initialization.

Finally, a different way of reasoning about local search algorithms such as gradient descent is to study the existence (or not) of ``spurious'' local optima in some appropriate non-convex objective function. An early example was~\cite{complexity-spin} and by now there are many more; see~\cite{nonconvex-book} for an overview. Non-existence of such spurious optima generally implies success for a local search algorithm, but curiously, sometimes local search algorithms succeed even in the presence of spurious optima~\cite{passed}.

\subsection{Summary}
\label{sec:summary}

It has certainly been fruitful to study statistical-computational gaps from all of these different complementary viewpoints, and I hope to see this continue. I will also take this opportunity to advocate for why I am especially excited about the low-degree framework, although this comes with the disclaimer that I am likely biased by my own perception of which high-dim stat questions are most ``natural'' or important. Among the unconditional lower bounds, I would say low-degree polynomials are in the top tier for reliable predictions, and also unmatched in their flexibility, covering different types of models, different tasks (detection, recovery, etc.), and different types of phenomena (sharp thresholds, precise super-polynomial runtimes, etc.). When a new problem comes along, the low-degree framework tends to be a user-friendly option to try first, although there are some settings where AMP-based predictions are easier and sharper, and I would like to see low-degree polynomials ``catch up'' here!

\begin{table}
\centering
\begin{tabular}{|l|x{1cm}|x{1cm}|x{1cm}|x{1cm}|x{1cm}|} 
\hline
 & SoS & SQ & AMP & OGP & LDP \\ 
\hline
Detection & \checkmark & \checkmark &  &  & \checkmark \\
\hline
Recovery &  & {\tiny\checkmark} & \checkmark & {\tiny\checkmark} & \checkmark \\
\hline
Optimization &  &  & \checkmark & \checkmark & \checkmark \\
\hline
Refutation & \checkmark &  &  &  & \checkmark \\
\hline
\end{tabular}
\captionsetup{width=.95\linewidth}
\caption{Which of the frameworks covered in Section~\ref{sec:relations} can (thus far) give unconditional hardness results for which tasks? The frameworks are --- with representative citations for the entries in the table --- sum-of-squares~(SoS)~\cite{local-stats,pcal}, statistical queries~(SQ)~\cite{sq-clique}, approximate message passing~(AMP)~\cite{LKZ-sparse,opt-spin-glass}, overlap gap property~(OGP)~\cite{GZ-ogp,GS-ogp}, and low-degree polynomials~(LDP)~\cite{HS-bayesian,SW-estimation,ld-opt,coloring-clique}. The two small check marks signify that (1) known SQ lower bounds for recovery are always leveraging a hard testing problem, so cannot separate recovery from detection, and (2) OGP for planted problems is useful for probing the limits of local search methods but is not a reliable predictor of the true computational threshold. Reductions are a separate endeavor, but these have been known to cover all four tasks if using the appropriate starting assumptions~\cite{BBH-reduction,VV-perceptron,det-rec-reduction}.}
\label{table}
\end{table}

SoS is also a strong contender for reliable predictions, subject to the caveats discussed in Section~\ref{sec:sos}. In fact, some might regard SoS lower bounds as \emph{significantly} stronger evidence for hardness than low-degree lower bounds. Personally I am not convinced of this --- but this is quite subjective and my opinion might be controversial here! --- and I value the low-degree framework's ability to formulate different tasks (detection, recovery, etc.)\ in a way that is particularly transparent and interpretable, compared to SoS.

We have also seen that the different frameworks are not entirely disjoint: for instance, SoS lower bounds tend to contain low-degree lower bounds, and low-degree lower bounds for optimization are always proved by some form of OGP. Exploring the connections between frameworks is an important ongoing quest, which has already seen some successes. We recap some of the highlights here, with the warning that these statements are oversimplifications of the actual results.

\begin{itemize}
    \item SoS algorithms can be turned into spectral methods~\cite{sos-detect}.
    \item Spectral methods can be turned into low-degree polynomials (Remark~\ref{rem:log-deg}).
    \item Physics can inform the design of spectral methods, e.g.~\cite{spectral-redemption,bethe-hessian,kikuchi,spectral-phase-retrieval}.
    \item OGP rules out AMP~\cite{ogp-amp} and low-degree polynomials~\cite{ld-opt} in non-planted settings.
    \item Certain lower bound approaches in the SQ and low-degree models are equivalent~\cite{sq-ld}.
    \item The Franz--Parisi potential connects geometric and algebraic hardness~\cite{fp,fp-sq}.
    \item AMP and low-degree polynomials are equivalent in some settings~\cite{MW-amp}.
    \item SoS simulates AMP, robustly~\cite{sos-amp-robustly}.
\end{itemize}

Most of these results have non-trivial caveats and technical conditions that we have swept under the rug, for instance the first two statements cannot be ``chained'' together as explained in Section~\ref{sec:sos}, and many of these results require assumptions on the problem setting. Perhaps future work will manage to strengthen these connections and build a more unified theory. On the other hand, we cannot hope to simply prove equivalence between all the frameworks, because the reality is more complicated: we have seen various examples where they simply do not make the same predictions, e.g., tensor PCA ``fools'' SQ, AMP, and OGP~\cite{sq-tensor-pca,RM-tensor-pca,geometric-rule}. In fact, often the different frameworks are not even asking the same question, due to discrepancies between the different tasks discussed in Section~\ref{sec:tasks}. Table~\ref{table} summarizes which frameworks can give lower bounds for which tasks. The low-degree framework is unique in that it can be applied to all four tasks, putting it in a central position for tying together the other frameworks. There is still more to understand about these connections. For instance, can low-degree lower bounds be shown to exclude algorithms like convex programs or Markov chains? Finally, it is exciting to see average-case reductions expand their sphere of influence, as this puts our predictions on more rigorous footing, without relying on a ``low-degree conjecture.''

\section{Proof Techniques}
\label{sec:techniques}

This is intended as a non-technical overview of the existing techniques for proving low-degree lower bounds, with references to the literature for more details.

\subsection{Detection}
\label{sec:pf-det}

We first consider detection problems where the goal is to test between a planted distribution $\PP = \PP_n$ and a null distribution $\QQ = \QQ_n$, where, crucially, $\QQ$ has independent coordinates. This is the most basic setting for low-degree lower bounds, which appeared in the earliest works on the low-degree framework~\cite{sos-detect,HS-bayesian,hopkins-thesis}. We will discuss more complicated choices for $\QQ$ in Section~\ref{sec:planted-v-planted-pf}. Throughout, assume that for each $n$, $\PP_n$ and $\QQ_n$ have finite moments of all orders so that polynomials have finite expectation; these moments may have arbitrary dependence on $n$.

\subsubsection{Reformulation as Advantage}
\label{sec:adv}

To prove a lower bound, the goal is to rule out strong or weak separation, as defined in Definition~\ref{def:sep}. To do this, it suffices to bound a certain ratio, which we have already introduced under the name ``advantage'' in~\eqref{eq:adv}.

\begin{proposition}\label{prop:adv-to-sep}
Consider planted and null distributions $\PP = \PP_n$ and $\QQ = \QQ_n$. For some $D = D_n$, let
\[ \Adv_{\le D} := \sup_{f \in \RR[Y]_{\le D}} \frac{\EE_{Y \sim \PP}[f(Y)]}{\sqrt{\EE_{Y \sim \QQ}[f(Y)^2]}}. \]
\begin{itemize}
\item If $\Adv_{\le D} = O(1)$ as $n \to \infty$ then no degree-$D$ polynomial strongly separates $\PP$ and $\QQ$.
\item If $\Adv_{\le D} = 1+o(1)$ as $n \to \infty$ then no degree-$D$ polynomial weakly separates $\PP$ and $\QQ$.
\end{itemize}
\end{proposition}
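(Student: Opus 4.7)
The plan is to reduce both bullets to a single inequality of the form
\[ \bigl|\EE_{Y \sim \PP}[f(Y)] - \EE_{Y \sim \QQ}[f(Y)]\bigr| \;\le\; c \cdot \sqrt{\Vop_{Y \sim \QQ}[f(Y)]} \]
for every $f \in \RR[Y]_{\le D}$, with $c = \Adv_{\le D}$ in the first bullet and with the sharper $c = \sqrt{\Adv_{\le D}^2 - 1}$ in the second. Once such an inequality is in hand, Definition~\ref{def:sep} applies directly: $\sqrt{\max\{\Vop_{Y\sim\PP}[f],\Vop_{Y\sim\QQ}[f]\}} \ge \sqrt{\Vop_{Y\sim\QQ}[f]} \ge |\EE_\PP[f] - \EE_\QQ[f]|/c$, so the ``separation ratio'' is at most $c$. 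Strong separation demands this ratio be $\omega(1)$ while weak separation demands it be $\Omega(1)$, so $c = O(1)$ precludes the former and $c = o(1)$ precludes the latter.

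To establish the first inequality, I would center $f$ under $\QQ$: set $g := f - \EE_{Y \sim \QQ}[f(Y)]$, which again lies in $\RR[Y]_{\le D}$ and satisfies $\EE_{Y \sim \QQ}[g^2] = \Vop_{Y \sim \QQ}[f]$ as well as $\EE_{Y \sim \PP}[g] = \EE_\PP[f] - \EE_\QQ[f]$. Applying the definition of $\Adv_{\le D}$ to whichever of $\pm g$ makes the numerator nonnegative yields $|\EE_\PP[g]| \le \Adv_{\le D} \sqrt{\EE_\QQ[g^2]}$, which is the desired inequality with $c = \Adv_{\le D}$. This disposes of strong separation whenever $\Adv_{\le D} = O(1)$.

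For weak separation the constant $c = \Adv_{\le D}$ is too lossy, since under the hypothesis it still equals $1 + o(1)$ rather than $o(1)$. The plan is to sharpen it by invoking the explicit formula $\Adv_{\le D}^2 = \sum_{i=0}^m (\EE_\PP[h_i])^2$ promised by Proposition~\ref{prop:adv-c}, where $\{h_0,\ldots,h_m\}$ is a $\QQ$-orthonormal basis of $\RR[Y]_{\le D}$ taken with $h_0 \equiv 1$, so that $(\EE_\PP[h_0])^2 = 1$. Because the centered polynomial $g$ is $\QQ$-orthogonal to $h_0$, expanding $g = \sum_{i \ge 1} \alpha_i h_i$ and applying Cauchy--Schwarz gives
\[ |\EE_\PP[g]|^2 \;=\; \Bigl|\sum_{i \ge 1} \alpha_i \EE_\PP[h_i]\Bigr|^2 \;\le\; \Bigl(\sum_{i \ge 1} \alpha_i^2\Bigr)\Bigl(\sum_{i \ge 1} (\EE_\PP[h_i])^2\Bigr) \;=\; \EE_\QQ[g^2]\cdot(\Adv_{\le D}^2 - 1), \]
delivering the inequality with $c = \sqrt{\Adv_{\le D}^2 - 1}$; under $\Adv_{\le D} = 1 + o(1)$ this is $o(1)$, and weak separation is ruled out.

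The only real obstacle is extracting the $-1$ in $\sqrt{\Adv_{\le D}^2 - 1}$. The raw definition of $\Adv_{\le D}$ cannot produce it, because the constant polynomial $h_0 \equiv 1$ already contributes $1$ to the sup. The orthonormal-basis route above is the cleanest way to peel off that constant direction, and it is natural to adopt here since those same orthonormal polynomials are invoked by Proposition~\ref{prop:adv-c} immediately after. Any equivalent formulation (e.g., restricting the sup defining $\Adv_{\le D}$ to $f$ with $\EE_\QQ[f] = 0$) would give the same bound.
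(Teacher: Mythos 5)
Your proof is correct and takes essentially the same route as the paper intends. The paper defers the proof to Lemma~4.1 of~\cite{coloring-clique} with the note that it is ``a basic reformulation of the definition of separation,'' and Remark~\ref{rem:adv} already records the key identity you use, namely that restricting the sup in $\Adv_{\le D}$ to $\QQ$-centered polynomials yields $\sqrt{\Adv_{\le D}^2 - 1}$. Centering $f$ under $\QQ$, applying that identity to control $|\EE_\PP[f]-\EE_\QQ[f]|$ by $\sqrt{\Adv_{\le D}^2 - 1}\cdot\sqrt{\Vop_\QQ[f]}$, and lower-bounding the $\max$ in Definition~\ref{def:sep} by $\Vop_\QQ[f]$ alone is exactly the reformulation meant. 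The only cosmetic issue is your invocation of Proposition~\ref{prop:adv-c}, which appears later in the paper; this can be avoided by decomposing an arbitrary $f = a\cdot 1 + g$ with $\langle g, 1\rangle_\QQ = 0$ and optimizing $(a + \EE_\PP[g])^2 / (a^2 + \EE_\QQ[g^2])$ via Cauchy--Schwarz, which yields $\Adv_{\le D}^2 = 1 + \beta^2$ with $\beta = \sup_{g\perp 1} \EE_\PP[g]/\sqrt{\EE_\QQ[g^2]}$ directly, without reference to a specific orthonormal basis. Since you yourself observe that any equivalent formulation works, this is a matter of presentation rather than substance.
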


\noindent Note that $\Adv_{\le D} \ge 1$ always, due to the choice $f(Y) \equiv 1$. The proof of Proposition~\ref{prop:adv-to-sep} is a basic reformulation of the definition of separation. The details can be found, for instance, in the proof of Lemma~4.1 in~\cite{coloring-clique}. The upshot is that we now aim to compute or bound the quantity $\Adv_{\le D}$ but we first pause to discuss the history of this quantity.

\begin{remark}[Advantage]
\label{rem:adv}
The quantity we have denoted $\Adv_{\le D}$ (for ``advantage'') has more traditionally been denoted $\|L^{\le D}\|$ (or variants such as $\|LR^{\le D}\|$) due to its characterization as the \emph{norm of the low-degree likelihood ratio}. We omit the precise meaning of this and refer the reader to Sections~1.2--1.3 of~\cite{ld-notes} for this viewpoint and its relation to classical statistics, where the \emph{likelihood ratio} $L := d\PP/d\QQ$ and its norm $\|L\| := \sqrt{\EE_{Y \sim \QQ}[L(Y)^2]}$ have similar implications for information-theoretic indistinguishability. The notation $\Adv_{\le D}$ is a convenient alternative for situations where one does not wish to define likelihood ratios and norms, etc. In fact, $\Adv_{\le D}$ may be well-defined even in situations where the likelihood ratio does not exist, as in~\cite{planted-vec}. The term ``low-degree likelihood ratio'' was coined by Hopkins in~\cite[Ch~2]{hopkins-thesis}, where the connections to classical statistics were also first articulated. That work takes a different convention from us by centering polynomials under $\QQ$. Their quantity $\|LR^{\le D}-1\|$ is, in our notation,
\[ \sup_{f \in \RR[Y]_{\le D}} \frac{\EE_{Y \sim \PP}[f(Y)] - \EE_{Y \sim \QQ}[f(Y)]}{\sqrt{\EE_{Y \sim \QQ}[f(Y)^2]}} = \sup_{\substack{f \in \RR[Y]_{\le D} \\ \EE_{Y \sim \QQ}[f(Y)] = 0}} \frac{\EE_{Y \sim \PP}[f(Y)]}{\sqrt{\EE_{Y \sim \QQ}[f(Y)^2]}} = \sqrt{\Adv_{\le D}^2 - 1}. \]
To see the second equality, compare Eq.~(2.3.1) of~\cite{hopkins-thesis} with our Eq.~\eqref{eq:adv-c} below. The quantity $\Adv_{\le D}^2 - 1$ is analogous to the \emph{chi-squared divergence} (see e.g.~\cite{sparse-adversarial,sparse-tensor,grp-testing}) and can be denoted $\chi^2_{\le D}(\PP \| \QQ)$. The term ``advantage'' sometimes carries a different meaning in the context of hypothesis testing, and the connection to our usage is explained in~\cite{optimal-distinguishers-pc}.
\end{remark}

\subsubsection{Computing the Advantage}

Returning now to the task of computing $\Adv_{\le D}$, a key insight that arose from~\cite{pcal,sos-detect,HS-bayesian} is that $\Adv_{\le D}$ can be calculated explicitly using orthogonal polynomials.

\begin{proposition}\label{prop:adv-c}
Suppose $h_0,\ldots,h_m \in \RR[Y]_{\le D}$ form a basis for $\RR[Y]_{\le D}$ (as a vector space over $\RR$) and furthermore suppose they are orthonormal with respect to $\QQ$, meaning
\[ \Eop_{Y \sim \QQ} [h_i(Y) h_j(Y)] = \One_{i = j}. \]
Then
\begin{equation}\label{eq:adv-c}
\Adv_{\le D}^2 = \sum_{i=0}^m \left(\EE_{Y \sim \PP}[h_i(Y)]\right)^2.
\end{equation}
\end{proposition}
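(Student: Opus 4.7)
The plan is to reduce the supremum defining $\Adv_{\le D}$ to an elementary Cauchy--Schwarz problem by expanding an arbitrary polynomial in the given orthonormal basis. Since $h_0,\ldots,h_m$ form a basis of $\RR[Y]_{\le D}$ as a real vector space, every $f \in \RR[Y]_{\le D}$ has a unique representation $f = \sum_{i=0}^m c_i h_i$ for some coefficient vector $c = (c_0,\ldots,c_m) \in \RR^{m+1}$. I would then rewrite the numerator and denominator of the ratio in~\eqref{eq:adv} as linear and quadratic functions of $c$ respectively.

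For the denominator, orthonormality of the $h_i$ with respect to $\QQ$ immediately gives
\[ \Eop_{Y \sim \QQ}[f(Y)^2] \;=\; \sum_{i,j=0}^m c_i c_j \, \Eop_{Y \sim \QQ}[h_i(Y) h_j(Y)] \;=\; \sum_{i=0}^m c_i^2 \;=\; \|c\|^2. \]
For the numerator, setting $a_i := \Eop_{Y \sim \PP}[h_i(Y)]$ and $a := (a_0,\ldots,a_m)$, linearity of expectation yields $\Eop_{Y \sim \PP}[f(Y)] = \langle c, a \rangle$. Substituting back,
\[ \Adv_{\le D} \;=\; \sup_{0 \ne c \in \RR^{m+1}} \frac{\langle c, a\rangle}{\|c\|}, \]
and Cauchy--Schwarz identifies this supremum as $\|a\|$, attained at $c = a$. (That $a \ne 0$ is automatic: taking $f \equiv 1$ in~\eqref{eq:adv} shows $\Adv_{\le D} \ge 1$, so $\|a\| \ge 1$.) Squaring both sides gives~\eqref{eq:adv-c}.

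There is really no hard step in the proof itself; it is a direct computation once the right basis is in place. The main obstacle in \emph{applications} of Proposition~\ref{prop:adv-c} is problem-specific: one must exhibit a tractable orthonormal basis of $\RR[Y]_{\le D}$ under $\QQ$ and then evaluate the sum $\sum_i \Eop_{\PP}[h_i]^2$. This is precisely where the standing assumption that $\QQ$ is a product measure enters --- independence of coordinates under $\QQ$ allows one to build such an orthonormal basis as tensor products of one-dimensional orthonormal families (e.g., Hermite polynomials for Gaussian marginals, or Fourier characters for uniform $\{\pm 1\}$ marginals), reducing the calculation of $\Adv_{\le D}^2$ to a combinatorial sum indexed by low-weight multi-indices.
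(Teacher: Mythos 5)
Your proof is correct and follows essentially the same route as the paper's: expand $f$ in the orthonormal basis, use orthonormality to evaluate the $\QQ$-second-moment as the squared coefficient norm, rewrite the $\PP$-expectation as an inner product, and identify the supremum via Cauchy--Schwarz (the paper phrases this as observing that the optimizer is $\hat f = c$, which is the same thing). Your added remark that $a \ne 0$ follows from $f \equiv 1$, and your closing paragraph on how product structure in $\QQ$ makes such a basis available, are sensible supplementary observations but do not change the argument.
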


\begin{proof}
There are a few different ways to prove this. The more traditional proof involves projecting the likelihood ratio onto the subspace of degree-$D$ polynomials, and we refer to~\cite[Section~2.3]{hopkins-thesis} or~\cite[Section~2.3]{ld-notes} for this viewpoint. We give here a self-contained proof that does not require the likelihood ratio to exist. Expand a candidate polynomial $f$ as $f(Y) = \sum_{i=1}^m \hat{f}_i \, h_i(Y)$ for some vector of real coefficients $\hat{f} = (\hat{f}_0,\ldots,\hat{f}_m)$. Due to orthogonality of $\{h_i\}$,
\[ \Eop_{Y \sim \QQ}[f(Y)^2] = \sum_{i=0}^m \hat{f}_i^2 = \|\hat{f}\|^2. \]
Defining the vector $c = (c_0,\ldots,c_m)$ with $c_i = \EE_{Y \sim \PP}[h_i(Y)]$, we have
\[ \Eop_{Y \sim \PP}[f(Y)] = \sum_{i=0}^m \hat{f}_i \Eop_{Y \sim \PP}[h_i(Y)] = \langle c,\hat f \rangle. \]
Putting it together,
\[ \Adv_{\le D} = \sup_{\hat f} \frac{\langle c,\hat f \rangle}{\|\hat{f}\|} = \|c\|, \]
since the optimizer is $\hat{f} = c$ (or any scalar multiple thereof).
\end{proof}

\subsubsection{Orthogonal Polynomials}

From above, if we can construct orthogonal polynomials for $\QQ$, then we can compute $\Adv_{\le D}$ using the explicit formula~\eqref{eq:adv-c}. When $\QQ$ has independent coordinates, it is generally tractable to find an explicit basis of orthogonal polynomials (which is what makes this case especially approachable). We mention some useful examples of orthogonal polynomials below.

\begin{itemize}
\item If $\QQ$ has entries $Y_1,\ldots,Y_N$ that are i.i.d.\ and uniform on $\{\pm 1\}$, the orthogonal polynomials are the usual boolean Fourier characters (see~\cite{boolean-book}), namely, for $S \subseteq [N]$,
\[ \chi_S(Y) = \prod_{i \in S} Y_i. \]
For a given degree $D$, the polynomials $\{\chi_S\}_{|S| \le D}$ form an orthonormal basis for $\RR[Y]_{\le D}$. This covers the case where the null distribution is the random graph $G(n,1/2)$, where $N = \binom{n}{2}$ and $+1,-1$ encode the presence or absence of an edge, respectively.

\item If $\QQ$ has entries $Y_1,\ldots,Y_N$ that are i.i.d.\ Bernoulli$(p)$, the orthogonal polynomials are, for $S \subseteq [N]$,
\[ \chi_S(Y) = \prod_{i \in S} \frac{Y_i-p}{\sqrt{p(1-p)}}. \]
Again, $\{\chi_S\}_{|S| \le D}$ forms an orthonormal basis for $\RR[Y]_{\le D}$~\cite{janson-orthog-book}.

\item If $\QQ$ has entries that are i.i.d.\ $\cN(0,1)$, the orthogonal polynomials are the \emph{multivariate Hermite polynomials}. These are commonly used in low-degree lower bounds; see, for instance,~\cite[Section~2.3]{ld-notes}. Note that we need to normalize these so that they are orthonormal with respect to $\cN(0,1)$, which is not always the standard convention for Hermite polynomials.

\item The above examples are all special cases of the more general \emph{Morris class of exponential families}, discussed in~\cite{morris}.

\item More generally, if $\QQ$ has independent coordinates, one can find (univariate) orthogonal polynomials for each coordinate $i$ by applying Gram--Schmidt to the monomial basis $\{1,Y_i,Y_i^2,\ldots\}$, and these can be extended to a multivariate basis by taking products of the univariate basis elements.

\item A more involved example is the case where $\QQ$ is uniform over $k$-sparse binary vectors in $\{0,1\}^N$, in which case orthogonal polynomials are given by~\cite{filmus}. These are used for a low-degree lower bound in~\cite[Section~8.2]{grp-testing}.

\item Instead of a truly orthogonal basis, it suffices to use a near-orthogonal basis for $\QQ$. One example is the ``tensor cumulants'' of~\cite{tensor-cumulants}, which form a near-orthogonal basis for rotationally-invariant functions of tensors.

\end{itemize}

\subsubsection{Putting it Together}
\label{sec:together}

Provided we have explicit orthogonal polynomials $\{h_i\}$ for $\QQ$, the rest of the proof amounts to evaluating the expectations $\EE_{Y \sim \PP}[h_i(Y)]$ and then bounding the formula~\eqref{eq:adv-c}. We mention a few different strategies for going about this.

\begin{itemize}

\item The most basic approach is to find an explicit formula for $\EE_{Y \sim \PP}[h_i(Y)]$ and then bound the formula~\eqref{eq:adv-c} by direct combinatorial arguments. An instructive example of this is the case of planted clique, covered in~\cite[Section~2.4]{hopkins-thesis}. The computation of $\EE_{Y \sim \PP}[h_i(Y)]$ is easy for basic models like planted clique, but can become much more difficult for more complicated models such as random geometric graphs~\cite{kiril-fourier}. The second step --- bounding~\eqref{eq:adv-c} --- can also lead to some involved combinatorics. In cases where the direct approach seems unwieldy, the methods below offer some potential shortcuts.

\item In some cases, $\Adv_{\le D}$ admits an elegant formula. For instance, consider the additive Gaussian model where we aim to distinguish $\PP: Y = X + Z$ from $\QQ: Y = Z$ where $Z$ is an i.i.d.\ $\cN(0,1)$ vector (or matrix, etc.)\ and $X$ is a random vector from some arbitrary distribution (and independent from $Z$). Here we have~\cite[Thm~2.6]{ld-notes}
\begin{equation}\label{eq:gaussian-adv}
\Adv_{\le D}^2 = \sum_{d=0}^D \frac{1}{d!} \Eop_{X,X'} \langle X,X' \rangle^d
\end{equation}
where $X'$ is an independent copy of $X$. This formula effectively reduces the question to understanding moments of the ``overlap'' random variable $\langle X,X' \rangle$. See e.g.~\cite[Section~3.1.1]{ld-notes} or~\cite[Section~3.2]{sparse-clustering} for an example of a low-degree lower bound carried out this way. Other settings that admit convenient formulas (or bounds) for $\Adv_{\le D}$ include the spiked Wishart model (both rank-1~\cite[Lemma~5.9]{ld-notes} and higher-rank~\cite[Appendix~A]{spectral-planting} variants), non-Gaussian component analysis~\cite[Lemma~6.4]{planted-vec}, and models with independent noise from the Morris class of exponential families~\cite{morris}.

\item Some models can be related to a corresponding Gaussian model, allowing the formula~\eqref{eq:gaussian-adv} to be used. For instance, certain binary observation models with independent noise can be reduced to the Gaussian case~\cite[Appendix~B.1]{spectral-planting}. More general relations between different noise models are captured by the ``channel monotonicity'' of~\cite{morris} and ``channel universality'' of~\cite{kunisky-coordinate}.

\item Another approach involves a low-overlap truncation, also called the \emph{Franz--Parisi criterion} due to its conceptual link with some ideas based in statistical physics (see~\cite[Section~1.3]{fp}). The formula~\eqref{eq:gaussian-adv} can be viewed as a Taylor series truncation of
\[ \Eop_{X,X'} \exp(\langle X,X' \rangle) \]
which is the formula for $\Adv_{\le \infty}^2$, a.k.a., the squared norm of the likelihood ratio. A different type of truncation arises from restricting the overlap, namely
\begin{equation}\label{eq:fp}
\Eop_{X,X'} \One_{|\langle X,X' \rangle| \le \delta} \, \exp(\langle X,X' \rangle)
\end{equation}
for some choice of $\delta = \delta_n$. For a particular correspondence between $D$ and $\delta$, the two forms of truncation~\eqref{eq:gaussian-adv} and~\eqref{eq:fp} are closely related, so one can bound~\eqref{eq:fp} as a means to control $\Adv_{\le D}$~\cite{fp}. A version of this strategy can be extended beyond the additive Gaussian model, to certain ``planted sparse models''~\cite[Section~3]{fp}. An advantage of this strategy is that~\eqref{eq:fp} may be relatively easy to control, and one can largely avoid working with the details of the orthogonal polynomials, as in~\cite[Section~7.4]{grp-testing}. We refer to~\cite{fp} for further details of this approach, versions of which have also appeared in~\cite{sk-cert,ld-notes,spectral-planting,grp-testing}.

\item A rather different approach taken by~\cite{kunisky-coordinate} uses the \emph{Efron--Stein decomposition}. Instead of ruling out polynomials of degree (at most) $D$, this approach rules out a larger class of functions: those that can be expressed as a linear combination of functions that each depend on only $D$ coordinates (this is the notion of \emph{coordinate degree} from Remark~\ref{rem:coordinate-deg}). The subspace of such functions is the sum of many overlapping subspaces --- one for each choice of $D$ coordinates --- and the principle of inclusion--exclusion can be used to project the likelihood ratio onto this subspace. This method does not rely too heavily on the specific details of the distribution $\QQ$ or its orthogonal polynomials, and is shown in~\cite{kunisky-coordinate} to produce some general results that hold for a variety of different noise channels.

\item It can often be argued that the optimal polynomial must be invariant under the inherent symmetries of the problem. For instance, random graph models tend to be symmetric with respect to permutation of the vertices. In some cases it may be helpful to reduce to symmetric polynomials, as in~\cite{MW-amp,semerjian,tensor-cumulants,some-easy-ogp}.

\end{itemize}

\subsubsection{Conditioning}
\label{sec:cond}

While $\Adv_{\le D} = O(1)$ rules out strong separation, the converse is not always true. That is, there are situations where detection is hard --- in the sense that low-degree polynomials fail to achieve strong separation --- but $\Adv_{\le D} = \omega(1)$, due to some rare ``bad'' event under $\PP$. In these situations, we cannot quite use the above proof strategy, since it relies on showing $\Adv_{\le D} = O(1)$. However, we may still be able to use a \emph{conditional} version of the same strategy: define a modified planted distribution $\tilde{\PP} = \PP | A$ for some ``good'' event\footnote{The event $A$ is allowed to depend on both the sample $Y \sim \PP$ and any latent randomness used to generate it, such as properties of the hidden planted signal.} $A$ that has probability $1-o(1)$ under $\PP$. If we can show $\Adv_{\le D} = O(1)$ for the modified testing problem ($\tilde\PP$ versus $\QQ$), this rules out strong separation in the \emph{original} testing problem ($\PP$ versus $\QQ$); see\footnote{These citations assume $\tilde{P}$ is absolutely continuous with respect to $\QQ$ so that the likelihood ratio is defined. As mentioned in Remark~\ref{rem:adv}, this is not actually needed.}~\cite[Prop~6.2]{fp} or~\cite[Lemma~7.3]{grp-testing}, which also include a similar result for weak separation. While conditioning needs to be used as a proof device in the lower bound, our notion of ``success'' (strong/weak separation for the \emph{original} $\PP$) remains a useful one for both upper and lower bounds, as in~\cite[Thm~3.3]{grp-testing}. Low-degree lower bounds based on conditioning have appeared in sparse linear regression~\cite{fp}, Bernoulli group testing~\cite{grp-testing}, planted dense subgraph~\cite{subhypergraph}, graph matching~\cite{correlated-er}, and planted clique with non-adaptive edge queries~\cite{sublinear-clique}. A more subtle example is the case of correlated block models~\cite{correlated-sbm}, where the authors condition on a constant-probability event rather than a high-probability one.

\subsection{Recovery}

When approached with a recovery question, the easiest form of lower bound to try first is to look for a detection problem (where $\QQ$ has independent coordinates) that ``explains'' the hardness of recovery via a two-stage argument (see Section~\ref{sec:variations-ld}, or the more sophisticated detection-to-recovery reductions in~\cite{alg-contig,conj-sharp-sbm}). This way, the tools from the previous section can be applied. However, in some cases this may not be possible due to a detection-recovery gap (see e.g.~\cite[Appendix~B]{SW-estimation}). Also, we might prefer to find an unconditional bound on $\MMSE_{\le D}$ rather than a two-stage argument. Directly addressing the recovery question is more difficult, but we do have a growing toolbox of methods. While our tools for analyzing detection have a close analogy with classical statistics (e.g., chi-squared divergence), our tools for recovery appear rather different from their statistical counterparts.\footnote{For instance, we do not have a low-degree version of Fano's inequality (see~\cite{fano}), and I suspect this may not be possible because Fano reduces the problem to 2-point testing problems that do not have stat-comp gaps.}

Our goal is to give a lower bound on $\MMSE_{\le D}$, as defined in Definition~\ref{def:mmse}. This can be reformulated as a ratio similar to $\Adv_{\le D}$, namely~\cite[Fact~1.1]{SW-estimation}
\[ \MMSE_{\le D} = \EE[x^2] - \Corr_{\le D}^2 \]
where (recall) $x$ is the scalar quantity we aim to estimate and the \emph{degree-$D$ correlation} is defined as
\[ \Corr_{\le D} := \sup_{f \in \RR[Y]_{\le D}} \frac{\EE[f(Y) \cdot x]}{\sqrt{\EE[f(Y)^2]}}. \]
Here, all expectations are over the joint distribution of $(x,Y)$ according to the \emph{planted} model; there is no null model for a recovery problem.

Akin to Proposition~\ref{prop:adv-c} for $\Adv_{\le D}$, there is an explicit linear-algebraic formula for $\Corr_{\le D}$. Namely, if we choose any basis $h_0,\ldots,h_m$ for $\RR[Y]_{\le D}$ in which to expand a candidate polynomial $f$, and define the vector $c_i = \EE[h_i(Y) \cdot x]$ and matrix $M_{ij} = \EE[h_i(Y) \cdot h_j(Y)]$, then
\[ \Corr_{\le D} = \sup_{\hat f} \frac{\langle c,\hat f \rangle}{\sqrt{\hat f^\top M \hat f}} = \sqrt{c^\top M^{-1} c} \]
where the optimizer is $\hat f = M^{-1} c$. For the types of planted measures we tend to consider, it is possible to compute the entries of $M$ explicitly but it does not seem tractable to explicitly describe the entries of $M^{-1}$. It also does not seem tractable to find an explicit orthogonal basis for $\RR[Y]_{\le D}$ with respect to the \emph{planted} measure (which would make $M$ diagonal, and thus easy to invert). The existing approaches do not attempt to calculate $c^\top M^{-1} c$ directly, but use a variety of tricks to find a tractable upper bound. We now describe some of these approaches.

\begin{itemize}

\item For certain models of the form ``signal plus noise'' (informally speaking), it is possible to bound $\Corr_{\le D}$ in terms of certain joint cumulants of the signal~\cite{SW-estimation}. To derive these formulas, a lower bound on $\EE[f(Y)^2]$ is obtained by applying Jensen's inequality to the ``signal'' part only (not the noise). See also~\cite[Proposition~4.2]{dense-cycles} for a refined version of this formula for binary observation models. Other uses of this method include~\cite{graphon,hd-clustering}.

\item A more powerful framework that generalizes the Jensen trick above involves bounding $\EE[f(Y)^2]$ using orthogonal polynomials \emph{not in $Y$ but in some latent independent random variables used to generate the model}. For instance, in the planted clique model (Definition~\ref{def:pc}), it is not clear how to construct orthogonal polynomials in the observed variables $Y$, but we can construct orthogonal polynomials that are functions of the following independent random variables: a Bernoulli($1/2$) variable for each edge, representing whether or not it is included in $G(n,1/2)$, and a Bernoulli($k/n$) variable for each vertex, representing whether or not it is included in the planted clique. Some version of this strategy appears in~\cite{ld-tensor-decomp}. A more streamlined ``recipe'' is articulated in Section~1.2 of~\cite{sharp-est} (and I also thank Jonathan Niles--Weed for some discussions that helped inspire this). At a high level, the strategy is to construct a random variable $u$ with $\EE[u^2]$ as small as possible, subject to the constraint $\EE[f(Y) \cdot (u-x)] = 0$ for all $f \in \RR[Y]_{\le D}$. This acts as a ``dual certificate'' for $\Corr_{\le D}$ because
\[ \EE[f(Y) \cdot x] = \EE[f(Y) \cdot u] \le \sqrt{\EE[f(Y)^2]} \cdot \sqrt{\EE[u^2]}, \]
implying $\Corr_{\le D} \le \sqrt{\EE[u^2]}$. The random variable $u$ is represented using its expansion in the orthogonal polynomial basis mentioned above, in order to control $\EE[u^2]$. This method is more complicated than the cumulant formula mentioned above, but gives some of the sharpest known results~\cite{sharp-est,sbm-many}.

\item A different approach taken by~\cite{ld-bot,ld-bot-opt} for the broadcast tree model does not use orthogonal polynomials at all. Instead, the term $\EE[f(Y)^2]$ is bounded using an intricate method of decomposing the polynomial $f$.

\item The approach of~\cite{MW-amp} is unique in that it gives \emph{asymptotically exact} bounds on $\MMSE_{\le D}$ (albeit only for constant $D$) in a setting (spiked Wigner model) where $\MMSE_{\le D}$ converges to some non-trivial constant (described by the replica formula). The proof shows that the restricted class of ``tree-structured polynomials'' are as powerful as all polynomials of the same degree, and that tree-structured polynomials can be understood via a connection to existing theory on AMP (the optimal algorithm). See also~\cite{semerjian} for a related approach.

\end{itemize}

\subsection{Other Tasks}

We have focused on detection and recovery so far, but now we discuss low-degree lower bounds for other types of tasks.

\subsubsection{Planted-vs-Planted Testing}
\label{sec:planted-v-planted-pf}

Suppose we aim to hypothesis test between two distributions $\PP$ and $\QQ$ that both contain planted structure. For instance, maybe $\PP$ is a distribution with two planted cliques while $\QQ$ has only one planted clique. While similar to detection, the term ``detection'' is perhaps not appropriate here because our objective is a bit different from ``detecting'' the presence of a planted signal. On a technical level, we can use the same definition of ``success'' as in detection (strong/weak separation), but the analysis is more difficult because $\QQ$ does not have independent coordinates. To prove a lower bound, we aim to bound $\Adv_{\le D}$ as in detection, but now the distribution $\QQ$ appearing in the denominator is a planted distribution. This type of $\Adv_{\le D}$ bears close resemblance to the quantity $\Corr_{\le D}$ appearing in recovery lower bounds, and so the ideas used in recovery lower bounds can also be used for planted-vs-planted testing. We refer to~\cite{planted-v-planted} for further details, and to~\cite{coloring-clique,kiril-fourier} for additional examples.

\subsubsection{Refutation}

As described in Section~\ref{sec:variations-ld}, hardness of refutation can be argued via connection to some associated detection problem. There is also a direct way to formulate refutation as a task for polynomials, and define a notion of separation that captures success~\cite[Section~2.2]{coloring-clique}. Both options lead to similar proof strategies: one needs to construct a planted distribution $\PP$ that contains the structure we aim to refute, and then bound $\Adv_{\le D}$ between $\PP$ and our given distribution $\QQ$~\cite[Prop~2.14]{coloring-clique}. Provided $\QQ$ has independent coordinates, this is done using the usual detection techniques described in Section~\ref{sec:pf-det}. This proof strategy is known to be ``complete'' in the following sense: if refutation is hard for low-degree polynomials then there must exist a hard planted distribution $\PP$ such that $\Adv_{\le D}$ is bounded~\cite[Thm~2.20]{coloring-clique}. We refer to~\cite{coloring-clique} for further details.

We remark on a difference between refutation and recovery here. Hardness of both refutation and recovery can be deduced from hardness of detection via a two-stage argument (see Section~\ref{sec:variations-ld}). As mentioned above, refutation has the convenient property that bounds on $\Adv_{\le D}$ for the detection problem can be directly translated to (unconditional) failure of low-degree refutation algorithms. For recovery, we do not know such an implication: it is not clear how to translate bounds on $\Adv_{\le D}$ for the detection problem to bounds on $\MMSE_{\le D}$ for the recovery problem.

\subsubsection{Optimization}

Non-planted optimization problems pose a rather different set of challenges. Since there are potentially many different solutions achieving a target objective value, it is not clear how to frame the question as a linear algebra problem as we have seen for detection and recovery. Instead, the existing techniques combine some version of the \emph{overlap gap property (OGP)}, a structural property of the solution space, with the fact that low-degree polynomials exhibit some form of ``stability'' to small changes in the input. We refer to~\cite[Thm~1.4]{strong-ld} for some examples of these arguments, and to~\cite{ogp-survey,turing-survey} for a survey on the OGP and its other uses. Some of the more sophisticated uses of OGP involve the so-called \emph{branching OGP}~\cite{branching-ogp}. Originally, the class of algorithms ruled out by branching OGP did not include low-degree polynomials, but this has now been remedied, at least in one setting~\cite{sellke-spherical}.

\subsection{Upper Bounds}

In Section~\ref{sec:comm-upper}, I have attempted to stress the importance of low-degree \emph{upper bounds} that use the same measure of success (separation or mean squared error) as the lower bounds. Section~\ref{sec:list} includes a list of works that prove such results. The proof essentially amounts to constructing a polynomial and bounding its first two moments. However, this may be tedious to do directly, and the following considerations may be helpful.

\begin{itemize}

\item If we know a poly-time algorithm already, it may be possible to approximate this by a polynomial; see Section~\ref{sec:alg-captured} for some examples. On the other hand, algorithms that are sequential or iterative in nature tend to incur a severe blowup in degree (particularly if more than a constant number of iterations are used). In some cases it may be a better idea to design a new polynomial from scratch rather than attempting to mimic some existing algorithm.

\item There are certain tricks to engineering a polynomial in such a way that the variance is controlled or the analysis is easier. This includes, for instance, counting self-avoiding walks instead of all walks~\cite{HS-bayesian,heavy-tailed-saw,dense-cycles,sharp-est}, or using subgraph counts that are ``balanced''~\cite{graph-match-nearly,subhypergraph}. A sophisticated example is the idea of counting chandeliers~\cite{graph-match-otter}.

\item Rather than directly computing the moments of some polynomial, it may be easier to show that it ``succeeds with very high probability'' (appropriately defined), and then deduce conclusions about the first two moments via \emph{hypercontractivity} as in~\cite[Section~4]{SW-estimation}.

\item It may be useful to have a polynomial that approximates a threshold function in some appropriate sense, for instance~\cite[Prop~4.1]{SW-estimation}.

\end{itemize}

\section{Open Problems}
\label{sec:open}

The following open problems and broad directions are, in my view, some of the prominent challenges that we currently face in this area.

\begin{enumerate}

\item {\bf Ruling out concrete algorithms}: One general direction of interest is to expand the reach of the low-degree framework by showing that low-degree lower bounds imply failure of various concrete algorithm classes. One existing example is the connection between low-degree and statistical query algorithms~\cite{sq-ld}. Some algorithms that are \emph{not} known to be ruled out by low-degree lower bounds are Markov chains\footnote{There is some progress for Markov chains~\cite[Section~2.2]{fp}, but this is limited to Gaussian additive models.} and convex programs. The unproven ``pseudo-calibration conjecture''~\cite[Conjecture~1.2]{sos-detect} offers one precise sense in which low-degree lower bounds might rule out sum-of-squares programs.

\item {\bf Other notions of successful detection}: As discussed in Section~\ref{sec:comm-sep}, our existing low-degree lower bounds for detection rule out ``separation'' in terms of the first two moments, but this does not formally preclude a test that succeeds with high probability by thresholding the value of some low-degree polynomial. It may be desirable to rule out other notions of success, such as thresholding. Some weak results of this flavor include~\cite[Thm~4.3]{ld-notes} and~\cite[Section~2.3]{sparse-clustering}, but these include some undesirable technical conditions. A more refined version of this question is to characterize the tradeoff between type I and II errors achievable by low-degree polynomials, as discussed in~\cite[Section~2.4]{precise-error}.

\item {\bf Recovering physics predictions}: As discussed in Section~\ref{sec:relations-amp}, conjectures based on AMP can give exact predictions for the best possible asymptotic MSE achievable by efficient algorithms, in regimes where this MSE converges to some non-trivial constant described by a replica formula. See for instance~\cite{LKZ-sparse,MMSE}, or~\cite[Fig~1]{pf-replica} for an illustration. We are currently lacking low-degree lower bounds that achieve this same level of precision. Some progress has been made for constant-degree polynomials~\cite{MW-amp} but it remains open to extend this to higher degree.

\item {\bf Refutation lower bounds}: Our techniques for proving low-degree lower bounds for refutation are somewhat lacking. Refuting $k$-colorability in $G(n,1/2)$ is one very basic example of a problem where our low-degree upper and lower bounds do not match~\cite{coloring-clique}. Existing techniques rely on constructing a ``quiet'' planted distribution, which tends to be done in an ad hoc way.

\item {\bf Settings without independent randomness}: Most of our existing tools for low-degree lower bounds rely on exploiting independent random variables and their orthogonal polynomials, although one exception is~\cite{ld-bot,ld-bot-opt}. Problems involving random regular graphs, e.g.~\cite{cert-lift-mono}, are one setting where we are currently lacking the tools to prove low-degree lower bounds.

\item {\bf Low-degree upper bounds}: In various settings, we have low-degree lower bounds that match the best known algorithms, but we are missing a matching \emph{low-degree upper bound} (and as usual, I am slightly picky about what counts here, as explained in Sections~\ref{sec:comm-sep}--\ref{sec:comm-upper}). Establishing these would help to give credibility to the low-degree framework as a whole. A few specific examples are mentioned at the end of Section~\ref{sec:comm-upper}. One particularly glaring gap mentioned in Section~\ref{sec:list} is the absence of any low-degree upper bound that tracks \emph{super-polynomial} runtime. A specific open problem is to show that, in the planted submatrix model, $\MMSE_{\le D}$ is small for $D \approx \lambda^{-2}$ (see~\cite[Section~2.2]{sharp-est}).

\end{enumerate}

\newpage
\phantomsection
\addcontentsline{toc}{section}{References}

\bibliographystyle{alpha}
\bibliography{main}

\end{document}